\documentclass[smallextended]{svjour3}

\smartqed

\input xy

\xyoption{all}

\usepackage{latexsym,amssymb,amsbsy,amsmath,graphicx,epsfig,times}

% ENVIRONMENTS & THEOREMS

\newtheorem{thm}{Theorem}[section]
\newtheorem{lem}[thm]{Lemma}
\newtheorem{prop}[thm]{Proposition}
\newtheorem{cor}[thm]{Corollary}
\newtheorem{dfn}[thm]{Definition}
\newtheorem{ques}[thm]{Question}

%\parskip 7pt
%\parindent 0pt

% COMMANDS: TYPEFACES

\renewcommand{\bf}[1]{\mathbf{#1}}
\renewcommand{\rm}[1]{\mathrm{#1}}
\renewcommand{\cal}[1]{\mathcal{#1}}

% COMMANDS: CAPITAL BF

% COMMANDS: CAPITAL BB

\newcommand{\bbC}{\mathbb{C}}

\newcommand{\bbN}{\mathbb{N}}

\newcommand{\bbR}{\mathbb{R}}
\newcommand{\bbT}{\mathbb{T}}
\newcommand{\bbZ}{\mathbb{Z}}

% COMMANDS: CAPITAL SF

\newcommand{\sfP}{\mathsf{P}}

% COMMANDS: CAPITAL RM

\newcommand{\rmH}{\mathrm{H}}

% COMMANDS: lower rm

\renewcommand{\d}{\mathrm{d}}

\newcommand{\m}{\mathrm{m}}

% COMMANDS: CAPITAL CAL

\newcommand{\A}{\mathcal{A}}
\newcommand{\B}{\mathcal{B}}
\newcommand{\C}{\mathcal{C}}

\newcommand{\E}{\mathcal{E}}
\newcommand{\F}{\mathcal{F}}

\newcommand{\Z}{\mathcal{Z}}

% COMMANDS: CAPITAL FRAK

\newcommand{\frH}{\mathfrak{H}}

% COMMANDS: lower frak

% COMMANDS: CAPITAL GREEK

\newcommand{\G}{\Gamma}
\renewcommand{\O}{\Omega}
\renewcommand{\S}{\Sigma}

% COMMANDS: LOWER GREEK

\renewcommand{\a}{\alpha}
\renewcommand{\b}{\beta}
\newcommand{\eps}{\varepsilon}
\newcommand{\g}{\gamma}
\renewcommand{\k}{\kappa}
\renewcommand{\l}{\lambda}

\newcommand{\s}{\sigma}

% COMMANDS: MISC EXPRESSIONS

\newcommand{\img}{\mathrm{im}\,}

% COMMANDS: MISC SYMBOLS

\renewcommand{\hat}[1]{\widehat{#1}}
\newcommand{\ol}[1]{\overline{#1}}

\newcommand{\into}{\hookrightarrow}
\newcommand{\onto}{\twoheadrightarrow}
\newcommand{\fin}{\nolinebreak\hspace{\stretch{1}}$\lhd$}

\renewcommand{\t}[1]{\tilde{#1}}
\newcommand{\actson}{\curvearrowright}
\renewcommand{\to}{\longrightarrow}

\begin{document}

\title{Continuity properties of measurable group cohomology}
\author{Tim Austin\thanks{TA's research supported by fellowships from Microsoft Corporation and from the Clay Mathematics Institute} \and Calvin C. Moore}

\titlerunning{Moore cohomology}

\institute{\textsc{Tim Austin}\at
              Department of Mathematics,\\ Brown University, Providence, RI 02912, USA,\\ \email{timaustin@math.brown.edu} \and \textsc{Calvin C. Moore}\at Department of Mathematics,\\ University of California, Berkeley, CA 94720, USA,\\ \email{ccmoore@math.berkeley.edu}}

\date{}

\maketitle

\begin{abstract}
A version of group cohomology for locally compact groups and Polish modules has previously been developed using a bar resolution restricted to measurable cochains.  That theory was shown to enjoy analogs of most of the standard algebraic properties of group cohomology, but various analytic features of those cohomology groups were only partially understood.

This paper re-examines some of those issues.  At its heart is a simple dimension-shifting argument which enables one to `regularize' measurable cocycles, leading to some simplifications in the description of the cohomology groups.  A range of consequences are then derived from this argument.

First, we prove that for target modules that are Fr\'echet spaces, the cohomology groups agree with those defined using continuous cocycles, and hence they vanish in positive degrees when the acting group is compact.  Using this, we then show that for Fr\'echet, discrete or toral modules the cohomology groups are continuous under forming inverse limits of compact base groups, and also under forming direct limits of discrete target modules.

Lastly, these results together enable us to establish various circumstances under which the measurable-cochains cohomology groups coincide with others defined using sheaves on a semi-simplicial space associated to the underlying group, or sheaves on a classifying space for that group.  We also prove in some cases that the natural quotient topologies on the measurable-cochains cohomology groups are Hausdorff.
%\keywords{topological group cohomology \and locally compact groups \and direct limits \and inverse limits %\and comparison theorems}
\subclass{20J06 \and 22C05 \and 18G99 \and 37A99}
\end{abstract}

\tableofcontents

\section{Introduction}

\subsection{Cohomology for locally compact groups}

The cohomology of discrete groups came into being in the works~\cite{EilMac47-1,EilMac47-2} of Eilenberg and MacLane.  It emerged from Hurewicz' classical discovery that the cohomology groups of two aspherical simplicial complexes are equal if those complexes have isomorphic fundamental groups~\cite{Hur36}, and it then quickly developed in papers of Eilenberg, MacLane, Hopf, Eckmann and others.

In addition to clarifying the structure of these invariants from algebraic topology, this theory has proved useful within group theory in various ways. On the one hand, the low-degree (degree 1 and 2) cohomology classes were found to correspond with naturally-defined data describing crossed homomorphisms and Abelian extensions of groups, and so enabled a streamlined understanding of those data. On the other, cohomology classes in various degrees can be associated to a wide range of different kinds of action of a group, and so can help in understanding those actions.  In particular, the theory often allows the issue of whether a given action admits some additional structure to be boiled down to its simplest possible residue in the form of a functional equation which may or may not have a solution.  This ability to treat obstructions systematically has led to further points of interaction with both algebraic topology and number theory.

These aspects of the cohomology of abstract groups are by now described in a range of standard texts, such as Brown's thorough and accessible book~\cite{Bro82}.  Since the 1950s group cohomology has also taken its place as a central motivating example within the more abstract study of homological algebra, for which we recommend Weibel's treatment~\cite{Wei94}. In addition, introductions to various instances of interplay between group theory and topology that result from cohomology theory can be found in Bredon~\cite{Bre67} and Thomas~\cite{Tho86}.

Almost immediately, the question poses itself of how to define a similar theory for topological groups $G$ and topological $G$-modules $A$, and to reap the same benefits for topological groups as in the case of discrete groups.

We consider an Abelian topological group $A$ upon which $G$ acts as a topological transformation group of automorphisms. One issue is what kinds of topological group to consider. The most natural choice for $G$ is a locally compact group, and we will restrict ourselves here to that class, and in addition will always assume that any locally compact group discussed satisfies the second axiom of countability.  At first sight it is natural to assume that $G$-modules $A$ should also be locally compact, but it is essential both for applications and for the coherence of the theory that this class be expanded to the class of Polish $G$-modules (a Polish topological group is one that admits a complete and separable metric). In addition to all locally compact $G$-modules, this class includes all separable Banach and Hilbert spaces, plus a significant class of other separable topological vector spaces (the F-spaces).  It also includes important function spaces such as the space of all measurable functions from a standard Borel $\s$-finite measure space $(Y,\S,\mu)$ into a Polish group $A$, where this is given the topology of convergence in measure. Also, a countable product of Polish groups is Polish.

We denote the category of Polish $G$-modules by $\sfP(G)$.  Exact sequences in $\sfP(G)$ are sequences
\begin{eqnarray}\label{eq:sh-ex}
(0) \to A \stackrel{i}{\to} B \stackrel{j}{\to} C \to (0)
\end{eqnarray}
which are exact algebraically with $i$ and $j$ continuous. It follows from standard results on Polish groups~\cite{Ban55} that $i$ is automatically a homeomorphism onto its range and that $j$ automatically induces a homeomorphism of the quotient group $B/i(A)$ with $C$, so that the sequence is exact in a very strong sense.

What is sought in this context is a family of covariant functors $\rmH^n(G,\cdot)$, $n\geq 0$, from $\sfP(G)$ to Abelian groups with $\rmH^0(G,A) = A^G$, the subgroup of $G$-fixed points in $A$, and so that to every exact sequence~(\ref{eq:sh-ex}) in $\sfP(G)$ there are dimension-shifting connecting homomorphisms $\rmH^n(G,C) \to \rmH^{n+1}(G,A)$ (sometimes called `switchbacks') so that everything fits together to form an infinite exact sequence of cohomology
\[ (0) \to A^G \to B^G \to C^G \to \rmH^1(G,A) \to \rmH^1(G,B) \to \ldots..\]
Call such a family of functors, a cohomological functor $\rmH^\ast(G,\cdot)$. Finally, we want these functors to be effaceable in $\sfP(G)$, which means that for any $A$ and any $a \in \rmH^n(G,A)$ there is an exact sequence~(\ref{eq:sh-ex}) so that the image of $a$ in $\rmH^n(G,B)$ vanishes.  By a well-known argument of Buchsbaum~\cite{Buc60}, an effaceable cohomological functor on $\sfP(G)$, if one can be found, must be unique. Importantly, unlike discrete group cohomology, such an effaceable cohomological functor cannot be obtained by computing derived functors from injective resolutions, because the category $\sfP(G)$ generally does not have enough injectives.

One of us introduced in a series of papers beginning in 1964~\cite{Moo64(gr-cohomI-II),Moo76(gr-cohomIII),Moo76(gr-cohomIV)} cohomology groups satisfying these requirements, based on a complex of measurable cocycles from $G^n$ into $A$ (or more properly equivalence classes of measurable functions which agree almost everywhere) with the usual coboundary operator. (Precise definitions will be recalled in Section~\ref{sec:prelim} below.) This leads to cohomology groups we denote by $\rmH_\m^\ast(G,A)$ (`m' standing for `measurable'). One could also use Borel functions, but this leads to the same end result. These groups have the right values in degree $0$, and the dimension-shifting connecting maps always exist and fit into a long exact sequence of cohomology corresponding to any short exact sequence~(\ref{eq:sh-ex}). Finally they are effaceable, and hence form the unique effaceable cohomological functor on $\sfP(G)$.

In addition, the groups $\rmH_\m^n(G,A)$ have a natural topology which comes from the quotient structure
$\Z^n(G,A)/\B^n(G,A)$ where $\Z^n(G,A)$, the group of measurable cocycles, is itself Polish and where $\B^n(G,A)$ is the continuous image of another Polish group.  If the latter group happens to be closed in the former group, or equivalently if the quotient is Hausdorff, then $\rmH_\m^n(G,A)$ is also a Polish group. However, the coboundaries are not always closed and in this case $\rmH_\m^n(G,A)$ still has a topology, but it is not particularly useful.

Although $\rmH^\ast_\m$ is the unique effaceable cohomological functor on $\sfP(G)$, many different candidates for a cohomology theory of topological groups have emerged over the years based on other sets of requirements. We will now briefly discuss five of these candidates.

The first appearance of cohomology groups of topological groups was in class field theory, where Galois groups of infinite field extensions and their cohomology groups appeared naturally. These Galois groups are profinite and so compact and totally disconnected, and the cochains that appeared naturally were continuous ones (see~\cite{ArtTat51} and also~\cite{ArtTat09}). This  cohomology theory for infinite Galois groups and its applications to class field theory were developed by Tate and his students. At almost the same time van Est began a study of the cohomology of Lie groups acting on finite dimensional vector spaces, again using a complex of continuous cochains~\cite{vEs53}.  This work was continued and extended by Hochschild and Mostow~\cite{Mos61,Hoc61,HocMos62} and then developed further by Borel and Wallach~\cite{BorWal00}, who specifically included general infinite-dimensional Fr\'echet  spaces as $G$-modules.

In general, for any $G$ and any Polish $G$-module $A$, one can introduce the cochain complex of continuous functions from $G^n$ to $A$ with the usual coboundary operator on cochains. This is the most natural and straightforward generalization from the case of discrete groups, and produces a family of functors that we denote $\rmH^\ast_\rm{cts}(G,\cdot)$.  They satisfy $\rmH^0_\rm{cts}(G,A) = A^G$ and are effaceable.  In addition, if $G$ is totally disconnected and $A$ is arbitrary, or if $G$ is arbitrary and $A$ is restricted to the subcategory of modules that are Fr\'echet spaces, then the dimension-shifting connecting maps do exist and there is a long exact sequence of cohomology.  In both cases this is a consequence of results giving continuous lifts for short exact sequences of modules under these assumptions\footnote{In fact, earlier works such as Hochschild and Mostow's~\cite{HocMos62} simply narrowed the requirement for long exact sequences to only those short exact sequences of modules that admit continuous left-inverses as sequences of topological spaces.  With this convention, the theory $\rmH^\ast_\rm{cts}$ always has long exact sequences, but on the other hand the theory is not effaceable in $\sfP(G)$ if one allows only inclusions of modules that give rise to such distinguished short exact sequences.}. These are the two cases where these groups were introduced and developed very successfully. However, for general $G$ and $A$ and short exact sequences as in~(\ref{eq:sh-ex}) the dimension-shifting connecting maps for $\rmH^\ast_\rm{cts}$ are missing, so there is not always a long exact sequence of cohomology.  We note that, as for $\rmH^n_\m(G,A)$, the groups $\rmH_\rm{cts}^n(G,A)$ inherit a topology as a quotient of the $n$-cocycles (with the compact-open topology) modulo the $n$-coboundaries, which also may or may not be Hausdorff.

In 1973 Wigner~\cite{Wig73} introduced two further cohomology theories.  The first is based on equivalence classes of multiple extensions of elements of $\sfP(G)$, as studied by Yoneda~\cite{Yon60} in the non-topological case. The multiple extensions in dimension $n$ are exact sequences of the form
\[ (0) \to A \to E_1 \to E_2 \to \ldots \to E_n \to \bbZ \to (0)\]
where the $E_i$ are in $\sfP(G)$ and where $\bbZ$ is given the trivial $G$-action. This construction leads to cohomology groups $\rmH^\ast_\rm{YW}(G,A)$ (`$\rm{YW}$' for `Yoneda-Wigner') which are easily seen to be effaceable cohomological functors with the correct value for $\rmH^0$. Hence by the uniqueness theorem, Wigner concludes that $\rmH_\m^\ast(G,A) = \rmH^\ast_\rm{YW}(G,A)$ for all $G$ and $A$; we will not discuss $\rmH^\ast_\rm{YW}$ further in this paper.

Secondly, Wigner adapts a construction of Grothendieck, Artin, Verdier, and Deligne to this topological context.  Wigner builds a semi-simplicial space $G^\bullet$ over $G$, which at level $n$ is $G^n$, together with a semi-simplicial sheaf $\cal{A}^\bullet$ corresponding to any $G$-module $A$ in $\sfP(G)$. The sheaf at level $n$ consists of germs of continuous functions from $G^n$ into $A$. One then resolves this semi-simplicial sheaf to get cohomology groups $\rmH^\ast_\rm{ss}(G,A)$ (`$\rm{ss}$' for `semi-simplicial'). Wigner shows these are effaceable functors with the right value $A^G$ for $n=0$. However, the dimension-shifting connecting maps do not always exist. It is easy to see that they do exist if the exact sequence of $G$-modules~(\ref{eq:sh-ex}) has continuous local cross-sections from $C$ back up to $B$, but without this assumption a short exact sequence of modules may not correspond to a short exact sequence of sheaves on $G^\bullet$.  One can construct long exact sequences in $\rmH^\ast_\rm{ss}$ for short exact sequences of sheaves, but one has an adequate supply of short exact sequences only after enlarging the class of permitted sheaves on $G^\bullet$ to examples not arising from members of $\sfP(G)$.

Wigner's main result here is that if $G$ is finite-dimensional and if $A$ is a member of a certain subcategory $\sfP_\rm{F}(G)$ of $\sfP(G)$, consisting of Polish modules that have something he calls `Property F', then connecting maps always exist using only the sheaves constructed from objects of $\sfP_\rm{F}(G)$; and by the Buchsbaum uniqueness theorem applied to the category $\sfP_\rm{F}(G)$, it follows that $\rmH^\ast_\rm{ss}(G,A) = \rmH_\m^\ast(G,A)$ for $G$ finite-dimensional and $A$ in $\sfP_\rm{F}(G)$. (One also has to check that both of these theories are effaceable within the subcategory, which is the case.)  Later we will give an example of $G$ and $A$ where $\rmH^\ast_\rm{ss}(G,A)$ is not the same as $\rmH_\m^\ast(G,A)$, so Wigner's restrictions are not superfluous. We note that the cohomology theory $\rmH^\ast_\rm{ss}$ has been further developed and refined in recent papers of Lichtenbaum~\cite{Lic09} and Flach~\cite{Fla08}, working via a more general topos-theoretic redefinition based on ideas of Grothendieck, Artin and Verdier~\cite{SGA4t1,SGA4t2,SGA4t3} which always gives the same theory as Wigner's in his setting.  Those more recent works are motivated by number-theoretic applications to the interpretation of values of the Dedekind Zeta-functions of number fields.

Graeme Segal in 1970~\cite{Seg70} developed another cohomology theory of topological groups using a type of contractible resolution of a $G$-module $A$. To be comparable, $G$ and $A$ have to be k-spaces, which they are in our case; all $G$-modules have to be locally contractible; and only those exact sequences of $G$-modules which have topological local cross-sections are allowed. Unlike the category of Polish $G$-modules, this category admits a natural family of canonical resolutions of such modules, and using these Segal defines his theory much as the cohomology of classical groups can be obtained as a sequence of derived functors. We call this theory $\rmH^\ast_\rm{Seg}(G,A)$, and it is clear that in the context where they are both defined one has $\rmH^\ast_\rm{ss}(G,A) = \rmH^\ast_\rm{Seg}(G,A)$ (see~\cite{Seg70} Section 3).

The last cohomology theory for topological groups we mention is a direct generalization of the topological interpretation of cohomology for discrete groups in terms of classifying spaces. For a topological group $G$ one may construct a locally trivial principal $G$-bundle $E_G \to B_G$ with $E_G$ contractible, universally up to homotopy, and having done so the base $B_G$ is called a classifying space for $G$.  See, for instance, Husem\"oller~\cite{Hus95}, and also the monograph~\cite{HofMos73} of Hofmann and Mostert.  In our case these spaces may be taken to be paracompact. Then for any $A$ in $\sfP(G)$, one can form a locally trivial associated fibre bundle over $B_G$ with fibre $A$, and then form the sheaf $\cal{A}$ of germs of continuous sections of this bundle; see~\cite{Seg70} Proposition 3.3.  The sheaf cohomology groups $\rmH^\ast(B_G,\cal{A})$ are possible candidates for cohomology groups of $G$. However there is a problem in that $B_G$ is defined only up to homotopy type and sheaf cohomology may not be homotopy-invariant unless the sheaf is locally constant. This means that for a well-defined theory we need to restrict to discrete $A$, so that $\cal{A}$ is locally constant. (Although having fixed a choice of $B_G$ the groups $\rmH^\ast(B_G,\cal{A})$ for more general $A$ do have an auxiliary r\^ole in Segal's work: he uses dimension-shifting to construct a comparison $\rmH^\ast_\rm{Seg}(G,A)\to \rmH^\ast(B_G,\cal{A})$, and so he cannot stay within the class of discrete modules.) For a discrete $G$-module $A$, we denote
$\rmH^\ast_\rm{cs}(G,A) := \rmH^\ast(B_G,\cal{A})$ (`$\rm{cs}$' for `classifying space').  For discrete $G$ these coincide with the usual cohomology for discrete groups, since in this case $B_G$ is a $\rm{K}(G,1)$. Segal~\cite{Seg70} shows that $\rmH^\ast_\rm{Seg}$ and $\rmH^\ast_\rm{cs}$ agree when both are defined, that is for discrete $A$. Wigner also proves that $\rmH^\ast_\rm{ss}$ and $\rmH^\ast_\rm{cs}$ agree in this case.

While all of these theories play important r\^oles, clearly $\rmH^\ast_\m$ and $\rmH^\ast_\rm{ss}$ have the widest scope in terms of permissible coefficients $A$ and groups $G$, and
our primary focus here will be on $\rmH^\ast_\m$ and on expanding the area of its agreement with $\rmH^\ast_\rm{ss}$. Various analytic and computational questions about these cohomology groups remained open following~\cite{Moo64(gr-cohomI-II),Moo76(gr-cohomIII),Moo76(gr-cohomIV)}, and the present paper resolves some of these.  Our principal results are of three kinds:
\begin{itemize}
\item sufficient conditions for the cohomology bifunctors to be continuous under inverse or direct limits of the arguments;
\item sufficient conditions for the cohomology groups to be Hausdorff in their quotient topologies;
\item and conditions under which $\rmH_\m^\ast$ can be shown to agree with another of the theories discussed above.
\end{itemize}
As we shall see, the proofs of these results have certain key analytic arguments in common.

Before proceeding further, let us offer another general note concerning measurable cohomology $\rmH_\m^\ast(G,A)$.  It is reasonable to ask why this cohomology theory for topological groups $G$ and $A$ based on measurable cochains, which themselves appear to be so very weakly linked to the topological structure of $G$ and $A$, should work at all. Two points should be made. First, the topology on a locally compact group can be constructed or reconstructed from the measure theory --- that is, from the Borel structure and an invariant or quasi-invariant measure on $G$. Mackey established this in~\cite{Mac57} by refining an earlier result of Weil (which first appeared in~\cite{Wei64}). Hence the measure theory on $G$ determines the topology on $G$ in a rather strong sense. Second, one has a variety of automatic continuity theorems for Polish groups such as the following.  If $A$ is Polish group and $f:A\to B$ is a homomorphism into a separable metric group $B$ which is a Borel function (in that the inverse image of every Borel set in $B$ is a Borel set in $A$), then $f$ is continuous (p.23 in~\cite{Ban55}). So again for Polish groups $A$, the Borel structure has a lot to say about the topology of $A$.

\subsection{Comparison with continuous cochains}

A major concern of this paper will be to expand the areas of agreement between $\rmH_\m^\ast(G,A)$, $\rmH^\ast_\rm{cts}(G,A)$, $\rmH^\ast_\rm{ss}(G,A)$, and $\rmH^\ast_\rm{cs}(G,A)$.  The first two of these are defined simply in terms of cochains (measurable or continuous, respectively) in a bar resolution, and so their comparison will use only more elementary arguments.

Simple examples show that these theories can differ, and the continuous-cochains theory lacks both the universality properties of $\rmH_\m^\ast$ when considered on the whole of $\sfP(G)$ and also the correct interpretation in terms of group extensions in degree $2$ (rather, it captures those group extensions that admit global continuous sections).  However, it has long been suspected that these theories do coincide in the following situation.

\vspace{7pt}

\noindent\textbf{Theorem A}\quad\emph{If $G$ is a locally compact second countable group and $A$ is a Fr\'echet $G$-module then the natural comparison homomorphism
\[\rmH^\ast_{\rm{cts}}(G,A)\to \rmH_\m^\ast(G,A)\]
from continuous cohomology is an isomorphism: that is, every class in $\rmH_\m^\ast(G,A)$ has a continuous representative in the bar resolution, and if the class is trivial then that continuous representative is the coboundary of another continuous cochain.}

\emph{If in addition $G$ is compact then $\rmH_\m^p(G,A) = (0)$ for all $p > 0$.}

\vspace{7pt}

Note that the analogous agreement $\rmH^\ast_{\rm{cts}}(G,A)\cong \rmH_\rm{ss}^\ast(G,A)$ is given by Theorem 3 of~\cite{Wig73}.

If $A$ is a Fr\'echet module and $G$ is compact, then given a \emph{bounded} measurable cocycle $G^n \to A$ a standard averaging trick shows that it is a coboundary.  We prove Theorem A below using the same idea, but before it can be applied one must show that all cohomology classes have measurable-cocycle representatives that are `locally bounded' in a suitable sense. This follows from an elementary procedure for regularizing measurable cocycles based on dimension-shifting, which is the first innovation of the present paper.  A similar reduction to bounded cocycles already appeared in the proof of Theorem 2.3(1) of~\cite{Moo64(gr-cohomI-II)} concerning $\rmH_\m^2(\cdot,\cdot)$, but that relied instead on the interpretation of this group in terms of group extensions together with deep results of Mackey and Weil on the structure of standard Borel groups.  By contrast, the proof of Theorem A below uses only elementary analysis of cocycles themselves.

\subsection{Continuity properties and inverse and direct limits}

Another aspect of the groups $\rmH_\m^\ast$ that has remained unclear is their behaviour under forming inverse limits in the first argument or direct limits in the second.  If $\pi:G' \onto G$ is a continuous epimorphism of locally compact second countable groups and $A$ is a Polish $G$-module (which we identify also as a $G'$-module by composing with the epimorphism), then the natural inflation maps are a sequence of homomorphisms $\rm{inf}^p_\pi:\rmH_\m^p(G,A)\to \rmH_\m^p(G',A)$.  As a result, if $(G_m)_m$, $(\pi^m_k)_{m\geq k}$ is an inverse system of locally compact second countable groups with a locally compact second countable inverse limit $G$, $(\pi_m)_m$, and $A$ is a module for all these groups (say, lifted from a module for some minimal group in the system), we may form the direct limit of the associated inflation maps to obtain a chain of homomorphisms
\[\lim_{m\rightarrow}\rm{inf}^p_{\pi_m}:\lim_{m\rightarrow}\rmH_\m^p(G_m,A)\to \rmH_\m^p(G,A).\]

A natural question is whether these are isomorphisms.  It is clear that this is not always so: for example, if $(G_m)_{m\geq 1}$ is a sequence of finite-dimensional compact Abelian groups converging to an infinite dimensional compact Abelian group $G$ and we set $A := G$ with the trivial action of each $G_m$, then the identity map $G\to A$ is a Borel crossed homomorphism, hence a $1$-cocycle, but it clearly is not lifted from any of the groups $G_m$; and in this case there are no $1$-coboundaries, so the cohomology class of this identity element contains no other cocycles, and hence that class too is not lifted from any $G_m$.

However, in this paper we show that for compact base groups $G_m$ and for certain large classes of target module the above continuity under inverse limits does in fact obtain.

\vspace{7pt}

\noindent\textbf{Theorem B}\quad\emph{
If $(G_m)_m$, $(\pi^m_k)_{m\geq k}$ is an inverse system of compact groups with second countable inverse limit $G$, $(\pi_m)_m$ then
\[\rmH_\m^p(G,A) \cong \lim_{m\rightarrow}\rmH_\m^p(G_m,A)\]
under the direct limit of the inflation maps $\rm{inf}^p_{\pi_m}:\rmH_\m^p(G_m,A)\to \rmH_\m^p(G,A)$ whenever $A$ is a discrete Abelian group or a finite-dimensional torus with an action of $G$ that factorizes through every $\pi_m$.}

\vspace{7pt}

\noindent\emph{Remark}\quad Karl Hofmann has pointed out to us that there are many naturally-occurring functors of topological algebra that respect inverse limits of compact groups, as above, but do not respect more general categorial limits.  It is quite possible this situation holds for $\rmH_\m^\ast$, but we have not examined this question.  These issues are discussed very generally in~\cite{Hof76}. \fin

\vspace{7pt}

Since a torus is a quotient of a Euclidean space by a lattice, cohomology for discrete and toral targets can be connected using the long exact sequence of cohomology and an appeal to Theorem A.  The heart of Theorem B is therefore the case of discrete $A$, and this relies on a more quantitative version of our basic cocycle-smoothing procedure.

Similarly to the above, if now $G$ is fixed, $A$ is a discrete $G$-module and $A_m \subseteq A$ is some direct system of discrete submodules with union the whole of $A$, then these inclusions define a system of homomorphisms
\[\lim_{m\rightarrow}\rm{inc}_{\iota_m}^p:\lim_{m\rightarrow}\rmH_\m^p(G,A_m)\to \rmH_\m^p(G,A),\]
and once again we may ask whether these are isomorphisms.  In fact the same basic estimates as needed for Theorem A show that this is often also the case.

\vspace{7pt}

\noindent\textbf{Theorem C}\quad\emph{
If $G$ is any compact second countable group, $A$ a discrete $G$-module and $A_m\subseteq A$ a direct system of submodules such that $\bigcup_m A_m = A$ then
\[\rmH_\m^p(G,A) \cong \lim_{m\rightarrow}\rmH_\m^p(G,A_m)\]
under the direct limit of the maps on cohomology induced by the inclusions $A_m \subseteq A$.}

\vspace{7pt}

\noindent\emph{Remark}\quad Simply by applying Theorem C and then Theorem B in turn, one can deduce at once a strengthening of Theorem B to the case when $A$ is an arbitrary discrete Abelian $G$-module, according to which
\[\rmH_\m^p(G,A) \cong \lim_{m\rightarrow}\rmH_\m^p(G_m,A^{\ker \pi_m})\]
where $A^{\ker\pi_m}$ denotes the submodule of elements of $A$ that are individually fixed by the kernel of $\pi_m:G\onto G_m$, which may be re-interpreted as a $G_m$-module.  The applicability of Theorem C to this situation follows because the orbits of the $G$-action on $A$ must be compact, hence finite, and so every element of $A$ is fixed by some $\ker \pi_m$: that is, $A = \bigcup_m A^{\ker\pi_m}$. \fin

\vspace{7pt}

A few low-degree cases of the above results already appear as Theorems 2.1, 2.2 and 2.3 in Part I of~\cite{Moo64(gr-cohomI-II)}, also proved using the group-theoretic identification of the elements of $\rmH_\m^2(G,A)$ with locally compact extensions of $A$ by $G$.

For non-compact base groups, both Theorems B and C can fail. We will exhibit a finitely-generated discrete group $\G$ such that surjectivity fails in Theorem B for some increasing sequence of discrete $\G$-modules, and will then use this to show that surjectivity also fails in Theorem A for the inverse sequence of quotients $\G\times \bbT^\infty\to \G\times \bbT^N$, $N\geq 1$, and with target module equal to $\bbT$.  We suspect that injectivity can also fail in both cases, but have not constructed examples to show this.

\subsection{Topologies on cohomology groups and further comparison results}

The remainder of our work concerns two other sets of questions about $\rmH_\m^\ast$, which turn out to be closely intertwined.

First, each group $\rmH_\m^p(G,A)$ inherits a quotient topology as the quotient of the group of measurable cocycles by that of coboundaries, and it is natural to ask when this topology is Hausdorff.  This question has some intrinsic interest, but it also has consequences for the algebraic properties of the theory: an important part of~\cite{Moo76(gr-cohomIII)} is the development of a Lyndon-Hochschild-Serre spectral sequence for calculating the cohomology of a group extension, but this makes sense only when the topologies on the groups $\rmH_\m^p(\cdot,\cdot)$ for the kernel of the extension are Hausdorff.  The remarks following Lemma I.1.1 in~\cite{Moo64(gr-cohomI-II)} offer some further discussion of this issue, and it also arises in Chapter IX of Borel and Wallach~\cite{BorWal00} (particularly Section IX.3) and Remark 12.0.6 in Monod~\cite{Mon01} for related theories that are specific to Fr\'echet target modules.

Some special cases in which the quotient topology is Hausdorff are given in~\cite{Moo64(gr-cohomI-II),Moo76(gr-cohomIII),Moo76(gr-cohomIV)}. The following result is rather more general than those.

\vspace{7pt}

\noindent\textbf{Theorem D}\quad \emph{If $G$ is almost connected (that is, its identity component $G_0$ is co-compact) then the cohomology groups $\rmH_\m^p(G,A)$ are Hausdorff in their quotient topologies in all of the following cases:}
\begin{itemize}
\item \emph{$A$ a Euclidean space, in which case each $\rmH_\m^p(G,A)$ is also Euclidean in its quotient topology;}
\item \emph{$A$ discrete, in which case each $\rmH_\m^p(G,A)$ is discrete and countable;}
\item \emph{$A$ a torus, in which case each $\rmH_\m^p(G,A)$ is of the form}
\[\rm{discrete}\oplus\rm{Euclidean};\]
\item \emph{$A$ locally compact and locally contractible and with trivial $G$-action, in which case $\rmH_\m^p(G,A)$ is of the form} \[\rm{discrete}\oplus\rm{Euclidean}.\]
\end{itemize}

\vspace{7pt}

(Recall that an Abelian topological group is locally compact and locally contractible if and only if it is a Lie group, although we shall generally prefer the former description in the sequel.)

The second set of questions concerns the relation between $\rmH_\m^\ast$ and the other theories $\rmH^\ast_\rm{ss}$ and $\rmH^\ast_\rm{cs}$.

Hofmann and Mostert~\cite{HofMos73} show that $\rmH_\rm{cs}^\ast$ for discrete target modules enjoys the same continuity properties as asserted by Theorems B and C, and more recently Flach has shown that $\rmH^\ast_\rm{ss}$ also has these continuity properties (Proposition 8.1 in~\cite{Fla08}).  Combined with Wigner's, Lichtenbaum's and Flach's results that these theories coincide for Lie groups and discrete target modules (see Theorem 4 in~\cite{Wig73}, Section 2 of~\cite{Lic09} and Proposition 5.2 in~\cite{Fla08}), this proves that all three theories coincide for compact base groups and discrete targets.

Using this special case, it is then possible to analyse more general locally compact second countable groups by applying the Lyndon-Hochschild-Serre spectral sequence to the presentation of such a group implied by the Gleason-Montgomery-Zippin Theorem on the resolution of Hilbert's Fifth Problem.  Since this spectral sequence makes the technical requirement that the cohomology groups of the kernel of a group extension be Hausdorff, Theorem D plays an important r\^ole in this extension of the comparison results between the various theories.

Our main new comparison result is the following.

\vspace{7pt}

\noindent\textbf{Theorem E}\quad \emph{If $G$ is a locally compact group and $A$ is a $G$-module which is either Fr\'echet or locally compact and locally contractible, then \[\rmH_\m^\ast(G,A)\cong \rmH_\rm{ss}^\ast(G,A).\]}

\emph{In addition, if $A$ is Fr\'echet then this agrees with $\rmH^\ast_\rm{cts}(G,A)$; if $A$ is locally compact and locally contractible, then it agrees with $\rmH^\ast_\rm{Seg}(G,A)$, since such an $A$ lies in Segal's category of modules; and if $A$ is discrete, then these also agree with $\rmH^\ast_\rm{cs}(G,A)$.}

\vspace{7pt}

We also offer a simple example in which both $G$ and $A$ are compact to show that $\rmH_\m^\ast$ does not always coincide with either $\rmH^\ast_\rm{ss}$ or $\rmH^\ast_\rm{cs}$.

In comparing $\rmH_\m^\ast$, $\rmH^\ast_\rm{cts}$, $\rmH^\ast_\rm{ss}$ and $\rmH^\ast_\rm{Seg}$, one should note that each has the virtue of being the unique solution to a universal problem --- namely an effaceable cohomological functor on its category of definition. Indeed, for comparing just $\rmH^\ast_\m$ and $\rmH^\ast_\rm{cts}$ one may choose the categories to be the same, and the difference appears only in what are the permitted `short exact sequences' to which the functor must assign long exact sequences.  (Hence this difference is very much in the spirit of `relative homological algebra': see, for instance, Section VI.2 of Brown~\cite{Bro82} for a more classical example.)

In the case of $\rmH_\m^\ast(G,\cdot)$ this category is $\sfP(G)$ and all short exact sequences as in~(\ref{eq:sh-ex}) are allowed.  This provides the natural setting for applications to functional analysis or representation theory, and so from the viewpoint of such applications the measurable-cochains theory is distinguished. It also has the virtue of having an equivalent definition without mention of cocycles in terms of equivalence classes of Yoneda-type multiple extensions of elements of $\sfP(G)$. However, one disadvantage of $\rmH_\m^\ast$ is that computations of it are quite difficult in degrees greater than two. On the other hand $\rmH^\ast_\rm{ss}$, while it enjoys its universal property only in a more abstract category of semi-simplicial sheaves, is defined using a spectral sequence in sheaf cohomology that is already a powerful computational tool. Similarly $\rmH^\ast_\rm{cs}$, which has no obvious universality properties, can sometimes be computed very readily using tools from algebraic topology (as in~\cite{HofMos73}, for example). Hence Theorem E, which expands the known areas of agreement between $\rmH_\m^\ast$, $\rmH^\ast_\rm{ss}$ and $\rmH^\ast_\rm{cs}$, strengthens the usefulness of all three theories.

In fact, in the present paper we will already make use of this connexion in the proofs of some parts of Theorem D.  This does not make the relationship between Theorem D and Theorem E circular: rather, we will first prove some special cases of Theorem E without using the LHS spectral sequence, then use these to obtain Theorem D, and then with this in hand return to more general cases of Theorem E.

\subsection{Another application}

Although this paper is purely about cohomology theories for topological groups, we note in passing that it was originally motivated by a very concrete application in ergodic theory, to questions concerning the structure of `characteristic factors' for certain nonconventional ergodic averages.  In the case recently treated in~\cite{Aus--lindeppleasant1,Aus--lindeppleasant2}, part of this structure could be characterized using a family of Borel $3$-cocycles on a compact Abelian group $W$ taking values in the $W$-module $\cal{E}(W)$ of all affine $\bbT$-valued maps on $W$, endowed with the rotation action of $W$.  These $3$-cocycles emerge in the description of some isometric circle extension of a measurable probability-preserving $\bbZ^2$-action by rotations on $W$, and more specifically as the obstructions to the factorizability of some $2$-cocycle that actually corresponds to an extension of acting groups.  It is shown in~\cite{Aus--lindeppleasant2} that all examples of such systems can be obtained as inverse limits of finite-dimensional examples (in a certain natural sense), and these in turn can always be assembled by performing some standard manipulations on a special class of examples (joinings of partially invariant systems and two-step nilsystems).  The continuity results of the present paper were at the heart of the reduction to finite-dimensional examples in that work, and so provided a crucial ingredient needed in the proof of the main structure theorem in~\cite{Aus--lindeppleasant2}.  We will not give a more complete introduction to these questions here, but the whole story can be found in those papers.

\section{Preliminaries}\label{sec:prelim}

Throughout this paper we will work with the presentation of the cohomology theory $\rmH_\m^\ast$ using the inhomogeneous bar resolution, and refer to~\cite{Moo64(gr-cohomI-II),Moo76(gr-cohomIII),Moo76(gr-cohomIV)} for the relation of this to other definitions.

Suppose that $G$ is a locally compact second countable group, and let $m_G$ denote a left-invariant Haar measure on $G$, normalized to be a probability if $G$ is compact.  These assumptions will now stand for the rest of this paper unless explicitly contradicted. A \textbf{Polish Abelian $G$-module} is a triple $(A,\rho,T)$ in which $A$ is a Polish Abelian group with translation-invariant metric $\rho$ and $T:G\actson A$ is an action by continuous automorphisms.  It is \textbf{Fr\'echet} if $A$ is also a separable and locally convex real topological vector space in its Polish topology, and it is \textbf{Euclidean} if in addition it is finite dimensional.  It will later prove helpful to keep the metric and action explicit.  Since $T$ is continuous, if $G$ is compact then by averaging the shifted metrics $\rho(T^g\cdot,T^g\cdot)$ if necessary we may assume that $\rho$ is also $T$-invariant.

If $A$ is a Polish $G$-module and $p\geq 0$ then we write
$\cal{C}(G^p,A)$ for the $G$-module of
Haar-a.e. equivalence classes of Borel maps $G^p\to A$ with the
topology of convergence in probability (defined using the topology
of $A$).  Since we assume $G$ is second countable this topology on $\cal{C}(G^p,A)$ is also Polish.  Beware that the notation `$\C(G^p,A)$' indicates `cochains', as usual in group cohomology, and not `continuous functions' --- continuity will be marked by a subscript, as will be exhibited shortly.

When $p = 0$ we interpret $\C(G^p,A)$ as $A$, and when $p < 0$ we interpret it as the trivial group $(0)$.  When necessary we will equip it with the \textbf{diagonal action}
\[(R^gf)(g_1,g_2,\ldots,g_p) := T^g(f(g^{-1}g_1,g^{-1}g_2,\ldots,g^{-1}g_p)).\]
We denote the identity in $A$ or in any $\C(G^p,A)$ by $0$.

We also write $\C^p(G,A) := \C(G^p,A)$, and define the coboundary maps $d:\C^p(G,A)\to \C^{p+1}(G,A)$ by
\begin{eqnarray}\label{eq:bdrydfn}
d\phi(g_1,g_2,\ldots,g_{p+1}) &:=& T^{g_1}(\phi(g_2,g_3,\ldots,g_{p+1}))\nonumber\\ &&+ \sum_{i=1}^{p}(-1)^i\phi(g_1,g_2,\ldots,g_ig_{i+1},\ldots,g_{p+1})\nonumber\\ &&+ (-1)^{p+1}\phi(g_1,g_2,\ldots,g_p)
\end{eqnarray}
(where this and all similar later equations are to be understood as holding Haar-almost-everywhere; issues surrounding this point are discussed carefully in Section 4 of~\cite{Moo76(gr-cohomIII)}, and we will not dwell on them here). We define $\Z^p(G,A) := \ker d|_{\C^p(G,A)}$ and $\B^p(G,A) := \img d|_{\C^{p-1}(G,A)}$. As usual, one verifies that $d^2 = 0$, and so $\B^p(G,A)\subseteq \Z^p(G,A)$ and we can define
\[\rmH_\m^p(G,A) := \frac{\Z^p(G,A)}{\B^p(G,A)}:\]
these are the \textbf{measurable cohomology groups for $G$ with coefficients in $A$}.

Exactly analogously, we define $\C^p_\rm{cts}(G,A)$ to consist of cochains $G^p\to A$ that are continuous, and within it the subspaces $\Z^p_\rm{cts}(G,A)$ and $\B^p_\rm{cts}(G,A)$ of continuous cocycles and coboundaries.  The resulting cohomology groups
\[\rmH^p_\rm{cts}(G,A) := \frac{\Z_\rm{cts}^p(G,A)}{\B^p_\rm{cts}(G,A)}\]
are the \textbf{continuous cohomology groups for $G$ with coefficients in $A$}.

The various other cohomology theories for topological groups mentioned in the Introduction will be quickly recalled when we begin their analysis in Section~\ref{sec:other-theories}.

If $\pi:G'\to G$ is a homomorphism of groups and $\psi \in \C^p(G,A)$ then we write
\[\pi^{\times p}:= \pi\times \pi\times\cdots\times \pi:(G')^p\to G^p,\]
so that $\psi \circ\pi^{\times p} \in \C^p(G',A)$.  Clearly \[\Z^p(G,A)\circ \pi^{\times p}\subseteq \Z^p(G',A)\quad\hbox{and}\quad \B^p(G,A)\circ \pi^{\times p}\subseteq \B^p(G',A),\] so this lifting homomorphism has a quotient $\inf_\pi:\rmH_\m^p(G,A)\to \rmH_\m^p(G',A)$, referred to as the \textbf{inflation homomorphism} associated to $\pi$.

In case $G$ is compact, a Borel map $\psi:G^p\to A$ is \textbf{$\eps$-small} for some $\eps > 0$ if
\[m_{G^p}\{\bf{g}\in G^p:\ \rho(0,\psi(\bf{g})) > \eps\} < \eps\]
(so this definition implicitly involves a choice of metric on $A$).  For two such maps $\phi$, $\psi$ we define
\[\rho_0(\phi,\psi) := \inf\{\eps > 0:\ \phi - \psi\ \hbox{is $\eps$-small}\}.\]
This is routinely verified to define a metric on $\C(G,A)$ which metrizes the topology of convergence in probability; when $A = \bbR$ this gives the usual F-space structure on $\C(G,\bbR)$, which space is often denoted by $L^0(G)$ in functional analysis. We also define the \textbf{uniform metric} $\rho_\infty$ on $\C(G^p,A)$ associated to $\rho$ by
\[\rho_\infty(\phi,\psi) := \rm{ess\,sup}_{\bf{g}\in G^p}\ \rho(\phi(\bf{g}),\psi(\bf{g}));\]
of course for arbitrary Borel maps this may take the value $+\infty$.

We will sometimes use some standard analyst's notation: the relation $B \lesssim_X A$ asserts that $B$ is bounded by the product of $A$ with some positive constant depending only on $X$, and similarly a quantity is $\rm{O}_X(A)$ if it is bounded by the product of $A$ with some positive constant depending only on $X$.

\section{Improving the regularity of cocycles}

\subsection{Recap of dimension-shifting}

Our later arguments will rely crucially on the procedure of dimension-shifting, which allows us to re-write one cohomology group as another of different degree (and with a different target module), and so by induction on degree gain access to algebraic properties that are manifest only in low degrees (usually degree one).

This possibility follows from the standard long exact sequence together with effacement.  Concretely, we will use the vanishing result that $\rmH_\m^p(G,\C(G,A)) = (0)$ for all $p\geq 1$ and for any $(A,\rho,T)$ when $\C(G,A)$ is equipped with the diagonal action (see Theorem 4 of~\cite{Moo76(gr-cohomIII)}).  Indeed, any $G$-module $A$ embeds into $\C(G,A)$ as the closed submodule of constant maps; let $\iota:A\into \C(G,A)$ be this embedding.  As a result of the vanishing, constructing the long exact sequence from the presentation
\[A\into \C(G,A)\onto \C(G,A)/\iota(A)\]
collapses to give a sequence of switchback maps that are isomorphisms $\rmH_\m^p(G,A) \cong\rmH_\m^{p-1}(G,\C(G,A)/\iota(A))$ for
all $p\geq 1$.

As is standard, the switchback map obtained above is implemented by a simple operator from $\Z^p(G,A)$ into $\C^{p-1}(G,\C(G,A))$: if $\psi \in \Z^p(G,A)$ then we define $Q\psi \in \C^{p-1}(G,\C(G,A))$ by
\[Q\psi(g_1,g_2,\ldots,g_{p-1})(h) := (-1)^p\psi(g_1,g_2,\ldots,g_{p-1},g_{p-1}^{-1}g_{p-2}^{-1}\cdots g_1^{-1}h).\]
Now a straightforward manipulation of the equation $d\psi = 0$ shows that $d(Q\psi)(g_1,\ldots,g_p)$ is actually the constant-valued map $h\mapsto \psi(g_1,\ldots,g_p)\in A$.  We record this here for convenience: for $(g_1,g_2,\ldots,g_p)\in G^p$ and $h\in G$ we have from the definition~(\ref{eq:bdrydfn})
\begin{eqnarray*}
&&d(Q\psi)(g_1,g_2,\ldots,g_p)(h)\\ &&= R^{g_1}\big(Q\psi(g_2,\ldots,g_p)\big)(h)\\
&&\quad + \sum_{i=1}^{p-1}(-1)^iQ\psi(g_1,g_2,\ldots,g_ig_{i+1},\ldots,g_p)(h) + (-1)^pQ\psi(g_1,g_2,\ldots,g_{p-1})(h)\\
&&= (-1)^pT^{g_1}(\psi(g_2,g_3,\ldots,g_p,g_p^{-1}g_{p-1}^{-1}\cdots g^{-1}_2(g_1^{-1}h))\\
&&\quad + (-1)^p\sum_{i=1}^{p-1}(-1)^i\psi(g_1,g_2,\ldots,g_ig_{i+1},\ldots,g_p,g_p^{-1}g_{p-1}^{-1}\cdots g_1^{-1}h)\\
&&\quad + (-1)^{2p}\psi(g_1,g_2,\ldots,g_{p-1},g_p(g_p^{-1}g_{p-1}^{-1}\ldots,g_1^{-1}h))\\
&&= (-1)^pd\psi(g_1,g_2,\ldots,g_p,g_p^{-1}g_{p-1}^{-1}\cdots g_1^{-1}h) + \psi(g_1,g_2,\ldots,g_p)\\
&&= \psi(g_1,g_2,\ldots,g_p).
\end{eqnarray*}

Thus, the image of $Q\psi$ under the quotient $\C(G,A)\onto \C(G,A)/\iota(A)$ defines a class in $\rmH_\m^{p-1}(G,\C(G,A)/\iota(A))$, and the usual diagram chase shows that it depends only on $\psi + \B^p(G,A) \in \rmH_\m^p(G,A)$: this new class is the image of $\psi + \B^p(G,A)$ under the switchback isomorphism.

\subsection{The regularizing argument: qualitative version}

The measurable crossed homomorphisms that appear in degree-$1$ cohomology are automatically continuous, and the first benefit of dimension-shifting for our work is that it allows us to derive consequences of this in higher degrees.  We will now prove a basic result giving a sense in which all cohomology classes can be represented by cocycles having some additional regularity, not possessed by arbitrary measurable cochains.

\begin{dfn}
A Borel map $f:X\to Y$ from a locally compact space to a Polish space is \textbf{locally totally bounded} if for any compact $K\subseteq X$ the image $f(K)$ is precompact in $Y$.
\end{dfn}

We will need the following classical result from the topology of metric spaces: it follows, for instance, from Proposition 18 in Section IX.2 of Bourbaki~\cite{Bourb89}.

\begin{lem}
If $A \leq B$ is an inclusion of Polish Abelian groups (so $A$ is closed in $B$) and $q:B\to B/A$ is the quotient map, then for any compact subset $K \subseteq B/A$ there is a compact subset $L \subseteq B$ with $q(L) \supseteq K$. \qed
\end{lem}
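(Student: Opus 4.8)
The plan is to construct, for each $k\in K$, a single preimage $\ell(k)\in q^{-1}(k)$ in such a coordinated way that the \emph{whole} set $\{\ell(k):k\in K\}$ is precompact in $B$; its closure $L$ is then compact, and since $q(\ell(k))=k$ we get $q(L)\supseteq K$ for free (we never need $q(L)=K$, only containment). To set this up I would first fix a complete translation-invariant metric $\rho$ on $B$: this is available because an abelian Polish group is Raikov-complete, so any compatible invariant metric is automatically complete. The induced quotient metric $\bar\rho(q(x),q(y))=\inf\{\rho(x',y'):q(x')=q(x),\ q(y')=q(y)\}$ is then a complete compatible invariant metric on $B/A$, and it has the crucial \emph{balls-onto-balls} property: if $\bar\rho(m,0)<r$ then $m$ has a lift $\check m\in B$ with $\rho(\check m,0)<r$.

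\textbf{The telescoping construction.}
I would build $\ell(k)$ as a rapidly convergent series $\sum_{n\ge 1}b_n(k)$ in which each correction $b_n(k)$ is chosen from a \emph{finite} set $S_n\subseteq B$ whose elements shrink, namely $\rho(0,s)<2^{-(n-1)}$ for $s\in S_n$ (with $S_1$ merely finite). The partial sums $\ell_n(k)=b_1(k)+\cdots+b_n(k)$ are maintained so that $\bar\rho\big(q(\ell_n(k)),k\big)<2^{-n}$. For the inductive step, note that the residuals $r_n(k):=k-q(\ell_n(k))$ all lie in the $\bar\rho$-ball of radius $2^{-n}$ about $0$, and the set $\{r_n(k):k\in K\}$ is totally bounded: $K$ is compact while $q(\ell_n(k))$ ranges over the finite set $q(S_1)+\cdots+q(S_n)$, so the residuals lie in a finite union of translates of $K$. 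Hence I may pick a finite $2^{-(n+1)}$-net of these residuals, lift each net point to a small element of $B$ via the balls-onto-balls property, collect these lifts into $S_{n+1}$, and use them to correct $\ell_n(k)$ to $\ell_{n+1}(k)$; translation-invariance of $\bar\rho$ gives the improved estimate $\bar\rho(q(\ell_{n+1}(k)),k)<2^{-(n+1)}$.

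\textbf{Passing to the limit.}
By the geometric size bound on the $S_n$, the sequence $(\ell_n(k))_n$ is $\rho$-Cauchy, so completeness yields $\ell_n(k)\to\ell(k)\in B$, and continuity of $q$ together with $\bar\rho(q(\ell_n(k)),k)<2^{-n}$ forces $q(\ell(k))=k$. Finally, every $\ell(k)$ lies in $\Pi:=\big\{\sum_{n\ge1}s_n:\ s_n\in S_n\big\}$, which is the image of the compact product $\prod_{n}S_n$ (finite factors, compact by Tychonoff) under the summation map; this map is a uniform limit of the continuous partial-sum maps and is therefore continuous, so $\Pi$ is compact. Thus $L:=\overline{\{\ell(k):k\in K\}}\subseteq\Pi$ is a closed subset of a compact set, hence compact, and $q(L)\supseteq K$.

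\textbf{Main obstacle.}
Pointwise lifting of a single $k$ is immediate from surjectivity of $q$, so the real content is producing \emph{one} compact set that works simultaneously for all $k\in K$. The key device overcoming this is the family of finite correction ``menus'' $S_n$ with geometrically decaying diameters: they confine every lift into a fixed compact set $\Pi$ while still permitting the successive approximations to converge to exact preimages. The only technical nicety to flag is the completeness of the invariant metric, which, as noted, is automatic for abelian Polish groups.
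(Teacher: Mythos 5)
Your argument is correct, and it is genuinely different in character from what the paper does: the paper offers no proof at all, simply citing the result as classical (Proposition 18 in Section IX.2 of Bourbaki's \emph{General Topology}), whereas you give a complete, self-contained construction. Your proof is sound at every step: the automatic completeness of a compatible invariant metric on an abelian Polish group is a standard fact (metric-Cauchy sequences are Cauchy for the group uniformity, and Polish groups are Raikov-complete); the quotient metric is indeed compatible, invariant, and has the balls-onto-balls lifting property by its very definition; and the one genuinely delicate point --- that the residual set $\{r_n(k):k\in K\}$ is totally bounded even though $k\mapsto r_n(k)$ is not continuous --- you handle correctly by observing it sits inside the finite union of translates $K - \big(q(S_1)+\cdots+q(S_n)\big)$. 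Confining all corrections to finite menus $S_n$ with geometrically decaying size, so that every lift lands in the compact set $\Pi = \big\{\sum_n s_n : s_n \in S_n\big\}$, is exactly the successive-approximation mechanism that underlies the classical open-mapping-style lifting theorems, including Bourbaki's. What your route buys is transparency: it isolates precisely which hypotheses are used (closedness of $A$ so that $\bar\rho$ separates points, completeness of the invariant metric, compactness of $K$ for the finite nets), and it keeps the paper's development self-contained; what the citation buys is brevity and an appeal to a result stated in greater generality. The only cosmetic gap is that you do not spell out the base case of the induction, but it is the same net-and-lift step applied to $K$ itself with arbitrary (uncontrolled) lifts, which is exactly why $S_1$ is ``merely finite.''
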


\begin{prop}\label{prop:basic-smoothness}
For any locally compact, second countable $G$ and Polish $G$-module $(A,\rho,T)$, any cohomology class in $\rmH_\m^p(G,A)$ has a representative $\s \in \Z^p(G,A)$ which is locally totally bounded.
\end{prop}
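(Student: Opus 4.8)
The plan is to induct on the degree $p$, proving the statement for all Polish $G$-modules $(A,\rho,T)$ simultaneously and using the dimension-shifting isomorphism recalled above to reduce each degree to the one below it. The base case $p=0$ is trivial, since $G^0$ is a single point and any map into $A$ then has one-point (hence precompact) image; the motivating case $p=1$ is equally clear, because a measurable $1$-cocycle is a crossed homomorphism and so is automatically continuous, and a continuous map from the locally compact space $G$ sends compacta to compacta. For the inductive step I fix $p\geq 1$, a module $A$, and a class in $\rmH_\m^p(G,A)$. Applying the switchback to the presentation $A\into \C(G,A)\onto B$ with $B:=\C(G,A)/\iota(A)$, this class corresponds under the isomorphism $\rmH_\m^p(G,A)\cong\rmH_\m^{p-1}(G,B)$ to a class in $\rmH_\m^{p-1}(G,B)$, whose inverse is the connecting homomorphism $\partial$. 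By the inductive hypothesis applied to the Polish $G$-module $B$, this latter class has a locally totally bounded representative $\tau\in\Z^{p-1}(G,B)$, and since $\partial[\tau]$ is exactly the class we started with, it suffices to produce a locally totally bounded representative of $\partial[\tau]$.

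Next I would unwind the definition of $\partial$. One chooses any Borel lift $\tilde\tau\in\C^{p-1}(G,\C(G,A))$ with $q\circ\tilde\tau=\tau$, where $q:\C(G,A)\onto B$ is the quotient map; then $q_\ast(d\tilde\tau)=d(q_\ast\tilde\tau)=d\tau=0$, so $d\tilde\tau$ takes values in $\iota(A)$ and equals $\iota\circ\sigma$ for a unique $\sigma\in\Z^p(G,A)$ representing $\partial[\tau]$. Because $\iota$ is a homeomorphism onto a closed subgroup (by the remarks following~(\ref{eq:sh-ex})), $\sigma$ is locally totally bounded precisely when $d\tilde\tau$ is. In turn, $d\tilde\tau$ is locally totally bounded whenever $\tilde\tau$ is: the coboundary formula~(\ref{eq:bdrydfn}) assembles $d\tilde\tau$ from finitely many coordinate projections, group multiplications, and a single application of the module action $R$, each of which (using joint continuity of the action on $\C(G,A)$) carries compacta to compacta, and a finite sum of locally totally bounded maps is again locally totally bounded since addition is continuous. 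The entire problem therefore reduces to choosing the lift $\tilde\tau$ to be locally totally bounded.

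This last point is the crux, and it is exactly where Lemma~\ref{prop:basic-smoothness}'s predecessor, the lifting lemma for compacta, is indispensable. I would exhaust the $\sigma$-compact space $G^{p-1}$ by compacta $K_1\subseteq K_2\subseteq\cdots$ with $K_n\subseteq\mathrm{int}\,K_{n+1}$ and $\bigcup_n K_n=G^{p-1}$. For each $n$ the set $\overline{\tau(K_n)}$ is compact in $B$, so by the lemma there is a compact $L_n\subseteq\C(G,A)$ with $q(L_n)\supseteq\overline{\tau(K_n)}$. The continuous surjection $q|_{L_n}:L_n\to q(L_n)$ of compact metric spaces admits a Borel section $s_n:q(L_n)\to L_n$ by a standard measurable selection theorem (Kuratowski--Ryll-Nardzewski). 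Setting $K_0:=\emptyset$ and defining $\tilde\tau:=s_n\circ\tau$ on each Borel piece $K_n\setminus K_{n-1}$ yields a globally Borel cochain $\tilde\tau$ with $q\circ\tilde\tau=\tau$; and for any compact $K\subseteq G^{p-1}$, which lies in some $K_N$, the image $\tilde\tau(K)$ is contained in $L_1\cup\cdots\cup L_N$ and is hence precompact. Thus $\tilde\tau$ is locally totally bounded, and the step is complete.

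I expect the main obstacle to be precisely this construction of a locally totally bounded Borel lift: neither the mere existence of a Borel section of $q$ nor its control on compacta is automatic, and both the compacta-lifting lemma and the $\sigma$-compactness of $G^{p-1}$ (used to patch local sections into one global Borel cochain landing in a controlled family of compacta) are essential there. By contrast, the compatibility of $d$ with local total boundedness and the identification $[\sigma]=[\psi]$ should be routine once the joint continuity of the module action is invoked.
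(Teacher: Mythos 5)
Your proof is correct and follows essentially the same route as the paper's: a dimension-shifting induction in which the compact-lifting lemma, applied on a countable family of precompact pieces (your exhaustion $K_n\setminus K_{n-1}$, the paper's locally finite Borel partition) together with measurable selection, produces a locally totally bounded Borel lift whose coboundary is an $\iota(A)$-valued, locally totally bounded cocycle representing the original class. The only cosmetic difference is that you work at the level of classes via the connecting map $\partial$, whereas the paper manipulates the explicit primitive $Q\psi$ and corrects by the coboundaries $d\bar{\a}$ and $d\b$; the substance is identical.
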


\begin{proof}
This follows by a dimension-shifting induction.  When $p=1$ it is immediate from the automatic continuity of $1$-cocycles, so suppose now that $p\geq 2$ and $\psi:G^p\to A$ is a $p$-cocycle.

By dimension-shifting there is a cochain $\l\in \C^{p-1}(G,\C(G,A))$ with $\psi = d\l$, so that the image $\bar{\l}$ of $\l$ under the quotient onto $\C(G,A)/\iota(A)$ is a $(p-1)$-cocycle.  Therefore by the inductive hypothesis it is equal to $\bar{\k} + d\bar{\a}$ for some $\bar{\a}:G^{p-2}\to \C(G,A)/\iota(A)$ and some locally totally bounded cocycle $\bar{\k}$.

By choosing a Borel partition of $G^{p-1}$ that is countable, locally finite and has each cell precompact, and then applying the preceding lemma on each cell, there are measurable lifts $\k$ and $\a$ of $\bar{\k}$ and $\bar{\a}$ so that $\k$ is still locally totally bounded.  It follows that
\[\l = \k + d\a + \b\]
for some $\b:G^{p-1}\to A$, and hence
\[\psi = d\k + d\b.\]
Now, on the one hand, $d\k = \psi - d\b$ must takes values in the subgroup $\iota(A) \subset \C(G,A)$, and so is identified with an element of $\Z^p(G,A)$ cohomologous to $\psi$.  On the other, for any compact $K \subseteq G$ one has the property that
\[d\k(K^p) \subseteq T^K(\k(K^{p-1})) - \k(K^{p-1}) + \ldots + (-1)^{p+1}\k(K^{p-1}),\]
so this is precompact as a subset of $\C(G,A)$ by the local total boundedness of $\k$.  Since the topology of the closed subgroup $\iota(A) \subseteq \C(G,A)$ agrees with the subspace topology, this implies that $\s:= d\k$ is locally totally bounded, so the induction continues. \qed
\end{proof}

\subsection{The regularizing argument: quantitative version}

Proposition~\ref{prop:basic-smoothness} is already enough to derive some consequences on the structure of the groups $\rmH_\m^\ast$ or the comparison with other cohomology theories, but the continuity results of Theorems B and C for discrete or toral targets are more delicate.  These will rest on a quantitative analog of Proposition~\ref{prop:basic-smoothness} which in some settings will give us an additional ability to prove the triviality of cocycles that are quantitatively `small enough'.

We will need the following elementary estimate relating the sizes of $\psi$ and $Q\psi$, where $Q$ is the explicit dimension-shifting operator introduced at the beginning of this section.

\begin{lem}\label{lem:dim-shift-reg-1}
If $G$ is compact and $\psi \in \Z^p(G,A)$ is $\eps$-small with $0 < \eps < 1$, then $Q\psi$ is $\sqrt{\eps}$-small as a map from $G^{p-1}$ to the module $(\C(G,A),\rho_0,R)$: that is,
\[m_{G^{p-1}}\big\{\bf{g} \in G^{p-1}:\ m_G\{x \in G:\ \rho(0,Q\psi(\bf{g})(x)) \geq \sqrt{\eps}\} \geq \sqrt{\eps}\big\} < \sqrt{\eps}.\]
\end{lem}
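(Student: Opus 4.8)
The plan is to prove the displayed double-measure inequality directly, by transporting the smallness of $\psi$ from $G^p$ down to the fibres over $G^{p-1}$ and then applying a two-scale Chebyshev bound. First I would dispose of the cosmetic issues. Since $\rho$ is translation-invariant we have $\rho(0,-a)=\rho(0,a)$, so the sign $(-1)^p$ in the definition of $Q$ is irrelevant and
\[\rho\big(0,\,Q\psi(\bf{g})(x)\big)=\rho\big(0,\,\psi(\bf{g},\,c(\bf{g})x)\big),\qquad c(\bf{g}):=g_{p-1}^{-1}g_{p-2}^{-1}\cdots g_1^{-1},\]
where I write $\bf{g}=(g_1,\ldots,g_{p-1})$. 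Next I would record the hypothesis as a statement about the set
\[E:=\{(\bf{g},x)\in G^p:\ \rho(0,\psi(\bf{g},x))>\eps\},\qquad m_{G^p}(E)<\eps,\]
and for each fixed $\bf{g}$ denote by $E_{\bf{g}}:=\{x\in G:\ (\bf{g},x)\in E\}$ its fibre. The key observation is that for fixed $\bf{g}$ the map $x\mapsto c(\bf{g})x$ is a left translation, hence preserves the (left-invariant) Haar measure $m_G$; combined with the displayed identity this gives the exact equality
\[m_G\{x\in G:\ \rho(0,Q\psi(\bf{g})(x))>\eps\}=m_G(E_{\bf{g}}).\]

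With this in place the argument becomes a standard Fubini--Chebyshev estimate, with the square root appearing through a deliberate split of the single parameter $\eps$ into the two factors $\sqrt{\eps}\cdot\sqrt{\eps}$. By Fubini,
\[\int_{G^{p-1}}m_G(E_{\bf{g}})\,\d m_{G^{p-1}}(\bf{g})=m_{G^p}(E)<\eps,\]
so the nonnegative function $\bf{g}\mapsto m_G(E_{\bf{g}})$ has integral less than $\eps$. Markov's inequality at level $\sqrt{\eps}$ then yields
\[m_{G^{p-1}}\big\{\bf{g}:\ m_G(E_{\bf{g}})\geq \sqrt{\eps}\big\}\leq \frac{1}{\sqrt{\eps}}\int_{G^{p-1}}m_G(E_{\bf{g}})\,\d m_{G^{p-1}}(\bf{g})<\frac{\eps}{\sqrt{\eps}}=\sqrt{\eps}.\]

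It remains only to match the thresholds in the statement, and here I would use the hypothesis $\eps<1$, which forces $\sqrt{\eps}>\eps$. Consequently, for every $\bf{g}$,
\[\{x:\ \rho(0,Q\psi(\bf{g})(x))\geq \sqrt{\eps}\}\subseteq \{x:\ \rho(0,Q\psi(\bf{g})(x))>\eps\},\]
and the measure of the right-hand set equals $m_G(E_{\bf{g}})$ by the identity above; hence the inner bad $x$-set has measure at most $m_G(E_{\bf{g}})$. Therefore $\{\bf{g}:\ m_G\{x:\rho(0,Q\psi(\bf{g})(x))\geq\sqrt{\eps}\}\geq\sqrt{\eps}\}$ is contained in $\{\bf{g}:\ m_G(E_{\bf{g}})\geq\sqrt{\eps}\}$, whose measure I have just bounded by $\sqrt{\eps}$, which is exactly the claim. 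I do not expect any genuine obstacle here: the proof is elementary once the bookkeeping is set up, and the only points requiring care are the measure-preserving change of variables $x\mapsto c(\bf{g})x$ inside each fibre (licensed by left-invariance of $m_G$, and valid almost everywhere, as throughout the paper) and the clean two-scale split $\eps=\sqrt{\eps}\cdot\sqrt{\eps}$ that converts an $L^1$-smallness of size $\eps$ into the desired $\sqrt{\eps}$-type weak bound.
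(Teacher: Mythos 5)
Your proof is correct and follows essentially the same route as the paper's: rewrite $\rho(0,Q\psi(\bf{g})(x))$ as $\rho(0,\psi(\bf{g},c(\bf{g})x))$, use the fact that the shearing map $(\bf{g},x)\mapsto(\bf{g},c(\bf{g})x)$ preserves Haar measure (you check this fibre-wise via left-invariance, the paper states it globally on $G^p$), and conclude by Fubini together with the Markov/Chebyshev split $\eps=\sqrt{\eps}\cdot\sqrt{\eps}$ and the comparison $\sqrt{\eps}>\eps$. The paper compresses the last steps into ``this follows directly from Fubini's Theorem''; you have merely made those details explicit.
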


\begin{proof}
The above measure is simply
\begin{multline*}
m_{G^{p-1}}\big\{(g_1,g_2,\ldots,g_{p-1}) \in G^{p-1}:\\ m_G\{x \in G:\ \rho(0,\psi(g_1,g_2,\ldots,g_{p-1},g_{p-1}^{-1}\cdots g_1^{-1}x) \geq \sqrt{\eps}\} \geq \sqrt{\eps}\big\},
\end{multline*}
so since the map
\[G^p \to G^p:(g_1,g_2,\ldots,g_{p-1},x)\mapsto (g_1,g_2,\ldots,g_{p-1},g_{p-1}^{-1}\cdots g_1^{-1}x)\]
preserves Haar measure this follows directly from Fubini's Theorem. \qed
\end{proof}

The next lemma is a quantitative analog of the standard result that crossed homomorphisms are always continuous.

\begin{lem}\label{lem:L0-to-Linf}
Suppose that $G$ is compact, that $(A,\rho,T)$ is a Polish $G$-module and that $\a:G\to A$ is a Borel crossed homomorphism.  If $\a$ is $\eps$-small for some $\eps < \frac{1}{2}$ then $\rho_\infty(0,\a) \leq 2\eps$.
\end{lem}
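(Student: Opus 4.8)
The plan is to reduce the statement to the pointwise bound $\rho(0,\a(g)) \le 2\eps$ for almost every $g$, which is exactly the assertion $\rho_\infty(0,\a) \le 2\eps$, and then to establish this bound by using the cocycle identity to rewrite $\a(g)$ in terms of two values of $\a$ that lie in a large ``good set''. Throughout I would use that $\rho$ is translation-invariant (by definition of a Polish $G$-module) and that, since $G$ is compact, the convention of Section~\ref{sec:prelim} lets us take $\rho$ to be $T$-invariant as well. Writing $E := \{h \in G:\ \rho(0,\a(h)) \le \eps\}$, the $\eps$-smallness of $\a$ gives $m_G(E) > 1 - \eps$.

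First I would record the decomposition coming from the $p=1$ case of the cocycle equation $d\a = 0$: specializing~(\ref{eq:bdrydfn}) to $(g_1,g_2) = (h, h^{-1}g)$ gives $\a(g) = \a(h) + T^h(\a(h^{-1}g))$, valid for a.e.\ $(h,g) \in G^2$ because the shear $(h,g) \mapsto (h, h^{-1}g)$ preserves Haar measure on $G^2$. Combining the triangle inequality for $\rho$ with its translation- and $T$-invariance then yields
\[\rho(0,\a(g)) \le \rho(0,\a(h)) + \rho\big(0, T^h(\a(h^{-1}g))\big) = \rho(0,\a(h)) + \rho(0,\a(h^{-1}g))\]
for a.e.\ $(h,g)$.

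The core is then a counting argument producing, for (almost) every fixed $g$, a single $h$ with both $h \in E$ and $h^{-1}g \in E$; for such an $h$ the displayed inequality gives $\rho(0,\a(g)) \le 2\eps$. Since $h^{-1}g \in E$ is equivalent to $h \in gE^{-1}$, I need $E \cap gE^{-1} \ne \emptyset$. Here I would use that on a compact group Haar measure is invariant under inversion and under left translation, so $m_G(gE^{-1}) = m_G(E^{-1}) = m_G(E) > 1 - \eps$; as $\eps < \frac{1}{2}$, inclusion–exclusion gives $m_G(E \cap gE^{-1}) \ge 2m_G(E) - 1 > 1 - 2\eps > 0$.

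The only point requiring care — and the main, if minor, obstacle — is the almost-everywhere bookkeeping, since the cocycle identity holds only a.e.\ on $G^2$. By Fubini, for a.e.\ $g$ the decomposition holds for a.e.\ $h$, say for all $h \notin N_g$ with $m_G(N_g) = 0$; since $m_G(E \cap gE^{-1}) > 0$ exceeds $m_G(N_g)$, the set $E \cap gE^{-1} \setminus N_g$ is nonempty, and choosing any $h$ in it completes the pointwise bound. Taking the essential supremum over $g$ then gives $\rho_\infty(0,\a) \le 2\eps$.
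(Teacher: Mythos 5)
Your proof is correct and follows essentially the same route as the paper: both use the crossed-homomorphism identity to write $\a(g)$ in terms of two values of $\a$ lying in the large set $E$, found via the fact that two subsets of Haar measure greater than $\tfrac12$ (equivalently, $>1-\eps$ each) must intersect. Your version merely adds explicit Fubini bookkeeping for the almost-everywhere validity of the cocycle identity, which the paper leaves implicit.
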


\begin{proof}
Consider the crossed homomorphism equation
\[T^h\a(g) = \a(hg) - \a(h).\]
Let $E:= \{g:\ \rho(0,\a(g)) \leq \eps\}$, so by assumption $m_G(E) > \frac{1}{2}$.  Therefore $m_G(E\cap Eg^{-1}) > 0$ for any $g \in G$, and so for any $g$ we can find $h \in E$ so that also $hg \in E$.  Now since $\a(h)$ and $\a(hg)$ are both within distance $\eps$ of $0$ and $T$ preserves $\rho$, the above equation gives that $\a(g)$ is within distance $2\eps$ of $0$. \qed
\end{proof}

\begin{prop}\label{prop:L0-to-Linf}
There is a sequence of absolute constants $\eta_p > 0$ for which the following holds. Suppose that $G$ is compact, that $(A,\rho,T)$ is a Polish $G$-module, that $p\geq 1$ and that $\psi \in \Z^p(G,A)$ is $\eps$-small for some $\eps \leq \eta_p$. Then $\psi = \phi + d\l$ for some $\l \in \C^{p-1}(G,A)$ that is $\rm{O}_p(\eps^{2^{-p}})$-small and some $\phi \in \Z^p(G,A)$ with $\rho_\infty(0,\phi) \lesssim_p \eps^{2^{-p}}$.
\end{prop}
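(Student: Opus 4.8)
The plan is to prove the proposition by induction on $p$, following the dimension-shifting structure of the proof of Proposition~\ref{prop:basic-smoothness} but now tracking the relevant norms quantitatively. The base case $p=1$ is immediate from Lemma~\ref{lem:L0-to-Linf}: if $\psi\in\Z^1(G,A)$ is $\eps$-small with $\eps\leq\eta_1<\tfrac12$, then $\rho_\infty(0,\psi)\leq 2\eps\leq 2\eps^{1/2}$, so one may simply take $\phi:=\psi$ and $\l:=0$, and the required bounds hold with exponent $2^{-1}$.

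For the inductive step I would apply the explicit dimension-shifting operator $Q$. By Lemma~\ref{lem:dim-shift-reg-1} the cochain $Q\psi\in\C^{p-1}(G,\C(G,A))$ is $\sqrt\eps$-small for the module $(\C(G,A),\rho_0,R)$, and since $d(Q\psi)=\iota\circ\psi$ its image $\overline{Q\psi}$ under the quotient $q:\C(G,A)\onto\C(G,A)/\iota(A)$ is a genuine $(p-1)$-cocycle. Equipping $\C(G,A)/\iota(A)$ with the quotient metric $\bar\rho_0$ (which is invariant under the quotient action because $\iota(A)$ is a submodule and $\rho_0$ is $R$-invariant), the map $q$ is distance-decreasing, so $\overline{Q\psi}$ is again $\sqrt\eps$-small. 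Provided $\sqrt\eps\leq\eta_{p-1}$ I can invoke the inductive hypothesis at degree $p-1$ to write
\[\overline{Q\psi}=\bar\k+d\bar\a,\]
where $\bar\a$ is $\rm{O}_{p-1}(\eps^{2^{-p}})$-small and $\bar\k\in\Z^{p-1}(G,\C(G,A)/\iota(A))$ has uniform norm $\lesssim_{p-1}\eps^{2^{-p}}$; the key arithmetic is $(\sqrt\eps)^{2^{-(p-1)}}=\eps^{2^{-p}}$, so both bounds land at exactly the target exponent.

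The next step is to lift this decomposition back to $\C(G,A)$ while preserving the size control. Using a measurable selection across the fibres of $q$ I would produce Borel lifts $\k$ and $\a$ with $\rho_0(0,\k(\bf{g}))\leq 2\bar\rho_0(0,\bar\k(\bf{g}))$, and similarly for $\a$; since $\bar\rho_0(0,\cdot)$ is the fibrewise infimum of $\rho_0(0,\cdot)$, such lifts exist pointwise, and a standard selection theorem makes the choice Borel. Then $\k$ is still uniformly $\lesssim\eps^{2^{-p}}$ and $\a$ is still $\rm{O}(\eps^{2^{-p}})$-small. Because $q\circ(Q\psi-\k-d\a)=\overline{Q\psi}-\bar\k-d\bar\a=0$, the cochain $Q\psi-\k-d\a$ takes values in $\iota(A)$, so I may write it as $\iota\circ\l$ for a unique $\l\in\C^{p-1}(G,A)$. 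Applying $d$ and using $d(Q\psi)=\iota\circ\psi$, $d^2=0$, and the equivariance of $\iota$ (which makes $d$ commute with $\iota$) gives $d\k=\iota\circ(\psi-d\l)$; setting $\phi:=\psi-d\l\in\Z^p(G,A)$ then yields $\psi=\phi+d\l$ as required.

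It then remains to read off the two size bounds, after which one fixes the constants by taking $\eta_p\leq\min(\eta_{p-1}^2,c)$ for a small absolute $c$, so that every smallness threshold invoked is below $1$ and all distances stay in the regime where $\iota$ restricts to an isometry on small balls (a constant map of value $a$ being $\delta$-small exactly when $\rho(0,a)\leq\delta$ for $\delta<1$). For $\phi$, the identity $\iota\circ\phi=d\k$ and that isometry let me estimate $\rho_\infty(0,\phi)$ by the uniform norm of $d\k$; expanding $d\k$ into its $p+1$ shifted terms and using $R$-invariance of $\rho_0$ with its subadditivity gives $\rho_\infty(0,\phi)\lesssim_p\eps^{2^{-p}}$. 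For $\l$, the equation $\iota\circ\l=Q\psi-\k-d\a$ exhibits it (again via the isometry) as a combination of three cochains each $\rm{O}_p(\eps^{2^{-p}})$-small, using that $Q\psi$ is $\sqrt\eps$-small with $\sqrt\eps\leq\eps^{2^{-p}}$ for $p\geq1$, that $\k$ is uniformly small, and that $d\a$ is a sum of measure-preserving, $R$-invariant reparametrisations of the small cochain $\a$. I expect the main obstacle to be precisely the norm-controlled measurable lifting: passing from $\C(G,A)/\iota(A)$ back to $\C(G,A)$ by a Borel selection whose value has $\rho_0$-norm comparable to the quotient norm of its image, simultaneously in the uniform sense for $\bar\k$ and the probability sense for $\bar\a$. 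Verifying that $\bar\rho_0$ is the fibrewise infimum of $\rho_0$ and that such a selection can be made Borel is the one genuinely analytic point; everything else is bookkeeping with $\eps$-smallness and the coboundary formula~(\ref{eq:bdrydfn}).
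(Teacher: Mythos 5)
Your proposal is correct and follows essentially the same route as the paper's own proof: the same base case via Lemma~\ref{lem:L0-to-Linf}, the same dimension-shift through $Q\psi$ and Lemma~\ref{lem:dim-shift-reg-1}, the inductive hypothesis applied to $\ol{Q\psi}$ in the quotient module, norm-controlled measurable lifts of $\bar{\k}$ and $\bar{\a}$, identification of the residual with a small element of $\C^{p-1}(G,A)$, and the final coboundary step with the crucial $\delta<1$ observation for constant-valued maps. Your choice $\phi:=\psi-d\l$ coincides with the paper's $\phi:=d\k$ under $\iota$, and your exponent bookkeeping $(\sqrt{\eps})^{2^{-(p-1)}}=\eps^{2^{-p}}$ is, if anything, stated more cleanly than in the paper.
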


\begin{proof}
The construction of $(\eta_p)_{p\geq 1}$ and the proof proceed by induction on $p$.

\emph{Base clause: $p=1$}\quad If we choose $\eta_1 < \frac{1}{2}$ then in this case any $\psi\in\Z^1(G,A)$ is a crossed homomorphism and $\eps < \frac{1}{2}$, so it must already satisfy $\rho_\infty(0,\psi) < 2\eps$ by Lemma~\ref{lem:L0-to-Linf}.

\emph{Recursion clause}\quad Suppose we know the assertion in all degrees up to some $p\geq 1$ and for some $\eta_1$, $\eta_2$, \ldots, $\eta_p$, and we wish to prove it in degree $p+1$.  Given an $\eps$-small $\psi\in\Z^{p+1}(G,A)$, the dimension-shifting operator gives $Q\psi \in \C^p(G,\C(G,A))$ that is $\sqrt{\eps}$-small and is such that its image $\ol{Q\psi}$ upon quotienting by $\iota(A)$ is a member of $\Z^p(G,\C(G,A)/\iota(A))$.  Of course $\ol{Q\psi}$ is also $\sqrt{\eps}$-small, so applying the inductive hypothesis to $\ol{Q\psi}$ we see that it is of the form $\bar{\k} + d\bar{\a}$ for some $\rm{O}_p((\sqrt{\eps})^{2^{-p+1}})$-small $\bar{\a} \in \C^{p-1}(G,\C(G,A)/\iota(A))$ and some $\bar{\k} \in \Z^p(G,\C(G,A)/\iota(A))$ for which
\[\rho_\infty(0,\bar{\k}) := \rm{ess\,sup}_{\bf{g} \in G^p}\inf\{\rho_0(0,\k):\ \k\in \bar{\k}(\bf{g})\} \lesssim_p(\sqrt{\eps})^{2^{-p+1}} = \eps^{2^{-p}}.\]

Making two applications of the measurable selection theorem, we may select some $\rm{O}_p(\eps^{2^{-p}})$-small $\a$ lifting $\bar{\a}$ and some $\k \in \C^{p-1}(G,\C(G,A))$ lifting $\bar{\k}$ such that
\[\rho_\infty(0,\k) := \rm{ess\,sup}_{\bf{g} \in G^p}\rho_0(0,\k(\bf{g})) \lesssim_p\eps^{2^{-p}},\]
and for these it follows that $Q\psi = \k + d\a + \l$ for some $\l\in \C^p(G,\C(G,A))$ that takes values in the subgroup $\iota(A)$ of constant maps.

Since $Q\psi$, $\k$ and $d\a$ are all $\rm{O}_p(\eps^{2^{-p}})$-small as maps from $G^p$ into the module $(\C(G,A),\rho_0,R)$, it follows that $\l$ is also $\delta$-small for some $\delta \lesssim_p \eps^{2^{-p}}$. Provided we chose $\eps < \eta_{p+1}$ for some $\eta_{p+1}$ sufficiently small, this implies that $\delta < 1$, and hence that there is some subset of $\bf{g} \in G^p$ of measure at least $1-\delta$ for which the map $\l(\bf{g}) \in \C(G,A)$ is $\delta$-small, and so takes values within $\delta$ of $0 \in A$ on a subset of $G$ of measure at least $1 - \delta$. However, since $\l:G^p\to\C(G,A)$ actually takes values in the subgroup $\iota(A)$ of constant-valued maps, we may therefore identify it with a $\delta$-small member of $\C^p(G,A)$.  Note that the assumption that $\delta < 1$ was crucial here in justifying the implication that a constant-valued function $G\to A$ is $\delta$-small as a member of $\C(G,A)$ only if its constant value lies within $\delta$ of $0 \in A$.

Finally applying the coboundary operator gives
\[\psi = d(Q\psi) = d\k + d\l\]
where $\l$ is $\rm{O}_p(\eps^{2^{-p}})$-small, and if we set $\phi := d\k$ then
\begin{itemize}
\item on the one hand we have
\[\rho_\infty(0,\phi) = \rm{ess\,\sup}_{\bf{g} \in G^{p+1}}\rho_0(0,\phi(\bf{g})) \lesssim_p \eps^{2^{-p}},\]
since for every $\bf{g}$ the map $\phi(\bf{g}) \in \C(G,A)$ is a sum of $p+1$ different values of $\k$, all of which satisfy such an essential supremum bound;
\item and on the other both $\psi$ and $d\l$ take values in the subgroup $\iota(A) \subset \C(G,A)$, and hence the same must be true of $\phi$.
\end{itemize}
Therefore, once again making the proviso that $\eta_{p+1}$ be sufficiently small to guarantee that $\rho_\infty(0,\phi) < 1$, we deduce that the constant values taken by $\phi$ must lie within $\rm{O}_p(\eps^{2^{-p}})$ of $0$ almost surely, and so the induction continues to $p+1$. \qed
\end{proof}

\noindent\emph{Remark}\quad A fairly crude check shows that the sequence $\eta_p := \frac{1}{100(p!)^2}$ is certainly small enough. \fin

\section{Fr\'echet modules and continuous cocycles}

Theorem A follows quite quickly from Proposition~\ref{prop:basic-smoothness}.

\vspace{7pt}

\noindent\emph{Proof of Theorem A}\quad This is known in degree $1$ by the automatic continuity of crossed homomorphisms.  We will show that for any $p > 1$ and any Fr\'echet $G$-module $A$, any cohomology class in $\rmH_\m^p(G,A)$ has a representative cocycle $\psi \in \Z^p(G,A)$ that is effaced by the inclusion
\begin{eqnarray}\label{eq:inc-to-cts}
\iota:A\into \C_\rm{cts}(G,A),
\end{eqnarray}
where the latter is given its compact-open topology. This shows that $\rmH_\m^\ast$ remains effaceable if it is restricted to the category of Fr\'echet $G$-modules, since $\C_\rm{cts}(G,A)$ is still an object of this category (unlike $\C(G,A)$, which is in general a non-locally-convex F-space).  The continuous-cochains theory $\rmH^\ast_\rm{cts}$ is also effaceable under the same inclusion, and it also has long exact sequences because quotients of Fr\'echet spaces admit continuous lifts by the Bartle-Graves-Michael selection theorems (see, for instance, Section 1.3 of Benyamini and Lindenstrauss~\cite{BenyLin00}). Therefore, since the two theories are known to agree in degree $1$, the Buchsbaum criterion carries this agreement up to all degrees, where it is clear that the isomorphism is simply given by the obvious inclusion $\Z^p_\rm{cts}(G,A)\into \Z^p(G,A)$.

To prove effacement by~(\ref{eq:inc-to-cts}), we first deduce from Proposition~\ref{prop:basic-smoothness} that we may assume $\psi$ is locally totally bounded.  Let $\eta:G\to [0,\infty)$ be a continuous bump function: a compactly-supported continuous function with $\int_G \eta\,\d m_G = 1$, where $m_G$ is a left-invariant Haar measure on $G$.  We can now perform the classic averaging trick from bounded cohomology by integrating against $\eta$: define $\k:G^{p-1}\to \C(G,A)$ by
\[\k(g_1,g_2,\ldots,g_{p-1})(g) := (-1)^p \int_G \psi(g_1,\ldots,g_{p-1},g_{p-1}^{-1}\cdots g_1^{-1}gh) \eta(h)\, m_G(\d h).\]
This integral exists in the strong sense of Bochner by results of Grothendieck~\cite{Groth55} (see also Thomas~\cite{Tho75}), since the integrand is compactly supported and takes values in a compact set, so it is certainly absolutely summable in Grothendieck's sense. The continuity of $\eta$ and left-invariance of $m_G$ imply that $\k$ takes values in the subgroup $\C_\rm{cts}(G,A)$, and now the usual calculation gives
\begin{eqnarray*}
&&d\k(g_1,g_2,\ldots,g_p)(g)\\
&&= (-1)^p T^{g_1}\Big(\int_G \psi(g_2,\ldots,g_p,g_p^{-1}...g_2^{-1}(g_1^{-1}gh)) \psi(h)\,m_G(\d h) \Big)\\
&&\quad\quad - (-1)^p \int_G \psi(g_1g_2,\ldots,g_p,g_p^{-1}\cdots g_2^{-1}(g_1^{-1}gh)) \eta(h)\,m_G(\d h)\\
&&\quad\quad + (-1)^p \int_G \psi(g_1,g_2g_3,\ldots,g_p,g_p^{-1}\cdots g_2^{-1}(g_1^{-1}gh)) \eta(h)\,m_G(\d h)\\
&&\quad\quad - \cdots + (-1)^{2p} \int_G \psi(g_1,\ldots,g_{p-1},g_p(g_p^{-1}\cdots g_1^{-1}gh)) \eta(h)\,m_G(\d h)
\end{eqnarray*}
\begin{eqnarray*}
&&=\int_G \big((-1)^p d\psi(g_1,\ldots,g_p,g_p^{-1}\cdots g_1^{-1}gh) + \psi(g_1,\ldots,g_p) \big) \eta(h)\,m_G(\d h)\\
&&= \int_G \psi(g_1,\ldots,g_p) \eta(h)\,m_G(\d h)\\
&&= \psi(g_1,\ldots,g_p),
\end{eqnarray*}
as required.

In case $G$ is compact, we may let $\eta$ be the constant function $1$: in this case the cochain $\kappa$ obtained above simply takes values in $A$, and so $\psi = d\k$ is always a coboundary and hence $\rmH_\m^p(G,A) = (0)$. \qed

\vspace{7pt}

\noindent\emph{Remark}\quad Smoothing out cocycles by integrating against a bump function already has a precedent in Hochschild and Mostow~\cite{HocMos62}, where it is used for the slightly different task of comparing continuous with differentiable cocycles: see their Lemma 5.2 and Theorem 5.1. \fin

\vspace{7pt}

Of course, among non-Fr\'echet modules there are easy examples for which the analog of Theorem A fails: for example, $\rmH_\m^2(\bbT,\bbZ)$ is isomorphic to $\bbZ$ (as may be proved directly using the presentation $\bbZ\into \bbR\onto \bbT$ and the long exact sequence, or using Theorem E below), but any continuous map $\bbT^2\to\bbZ$ must be constant and so $\rmH^2_{\rm{cts}}(\bbT,\bbZ) = (0)$.

Some larger classes than the Fr\'echet modules still present an interesting question, however.  The above proof makes essential use of the property of Fr\'echet spaces that their quotients admit continuous sections.  This result may not be available for more general non-locally-convex F-spaces.  Indeed, rather strikingly, the following seems to be unknown:

\begin{ques}\label{ques:R-in-L0}
Consider $\bbR$ identified with the subspace of constant functions in $L^0([0,1])$, the space of a.e.-equivalence classes of measurable functions $[0,1]\to \bbR$.  Does it have a continuous cross-section for the topology of convergence in probability?
\end{ques}

This analytic issue gives rise to a corresponding question in cohomology.

\begin{ques}\label{ques:bad-compact-and-contractible}
Is there a non-locally-convex F-space $A$, or other contractible Polish group, which admits a continuous action of a compact metric group $G$ such that $\rmH_\m^p(G,A) \neq (0)$ for some $p \geq 1$?
\end{ques}

\section{Behaviour under inverse and direct limits}

\subsection{Proofs of Theorems B and C}

We next approach Theorems B and C.  These will use the quantitative version of our basic smoothing argument via the following two notions.

\begin{dfn}
A Polish $G$-module $(A,\rho,T)$ is \textbf{regular under smallness assumptions} if there is a sequence of positive constants $\delta_p > 0$ such that for any $p\geq 1$ and $\eps > 0$ there is some $\delta \in (0,\delta_p)$ such that any cocycle $\psi \in \Z^p(G,A)$ that is $\delta$-small is of the form $\psi = d\phi$ for some $\phi \in \C^{p-1}(G,A)$ that is $\eps$-small.

Similarly, it is \textbf{regular under uniform approximation} if there is a sequence of positive constants $\delta'_p > 0$ such that for any $p\geq 1$ and $\eps > 0$ there is some $\delta \in (0,\delta'_p)$ such that any cocycle $\psi \in \Z^p(G,A)$ having $\rho_\infty(0,\psi) \leq \delta$ is of the form $\psi = d\phi$ for some $\phi \in \C^{p-1}(G,A)$ with $\rho_\infty(0,\phi) \leq \eps$.
\end{dfn}

The point to all our dimension-shifting is that we can promote the latter of these properties to the former.

\begin{lem}\label{lem:promoting-regularity-assumptions}
If $G$ is compact and $(A,\rho,T)$ is regular under uniform approximation with error tolerances $\delta_p'$, then it is regular under smallness assumptions with error tolerances $\delta_p$ depending only on the $\delta_p'$.
\end{lem}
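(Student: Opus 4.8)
The statement asks to upgrade "regular under uniform approximation" to "regular under smallness assumptions." The essential tool is Proposition~\ref{prop:L0-to-Linf}, which decomposes any sufficiently small cocycle $\psi$ as $\phi + d\l$, where $\phi$ satisfies a \emph{uniform} bound $\rho_\infty(0,\phi) \lesssim_p \eps^{2^{-p}}$ and $\l$ is $\rm{O}_p(\eps^{2^{-p}})$-small. The plan is to feed this uniform piece $\phi$ into the hypothesis of regularity under uniform approximation to trivialize it, and then account separately for the coboundary term $d\l$. So the argument composes two known decompositions.

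\vspace{4pt}

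\noindent First I would fix $p\geq 1$ and the target tolerance $\eps > 0$. Applying Proposition~\ref{prop:L0-to-Linf}, any $\psi \in \Z^p(G,A)$ that is $\delta$-small for $\delta \leq \eta_p$ can be written $\psi = \phi + d\l$ with $\rho_\infty(0,\phi) \leq C_p \delta^{2^{-p}}$ for some constant $C_p$ depending only on $p$, and with $\l \in \C^{p-1}(G,A)$ being $\rm{O}_p(\delta^{2^{-p}})$-small. Note that $\phi = \psi - d\l \in \Z^p(G,A)$ is itself a cocycle, since $\psi$ is a cocycle and $d(d\l) = 0$. Now I would invoke the assumption that $A$ is regular under uniform approximation with tolerances $\delta_p'$: given a target of, say, $\eps/2$, there is some $\delta'' \in (0,\delta_p')$ such that any cocycle with $\rho_\infty$-norm at most $\delta''$ is the coboundary of some $\phi' \in \C^{p-1}(G,A)$ with $\rho_\infty(0,\phi') \leq \eps/2$, and hence in particular $\phi'$ is $(\eps/2)$-small as well. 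I would therefore choose $\delta$ small enough (depending only on $p$, $\eps$, the constant $C_p$, and $\delta_p'$) that $C_p\delta^{2^{-p}} \leq \delta''$; then $\phi = d\phi'$ for such a $\phi'$.

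\vspace{4pt}

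\noindent Combining the two decompositions gives $\psi = d\phi' + d\l = d(\phi' + \l)$. It remains to check the smallness of $\phi' + \l$. The cochain $\phi'$ is $(\eps/2)$-small by construction, and $\l$ is $\rm{O}_p(\delta^{2^{-p}})$-small, which can be made at most $(\eps/2)$-small by shrinking $\delta$ further. Since the sum of an $\eps_1$-small and an $\eps_2$-small cochain is $(\eps_1+\eps_2)$-small (a routine union-bound on the two exceptional sets), $\phi' + \l$ is $\eps$-small, as required. Finally, one verifies that the bound $\delta$ produced along the way stays below a tolerance $\delta_p$ depending only on the $\delta_p'$ (and the absolute constants $\eta_p$, $C_p$ from Proposition~\ref{prop:L0-to-Linf}): one may simply set $\delta_p := \min\{\eta_p, \delta_p'\}$ and observe that every constraint imposed is of the form "$\delta$ smaller than something depending only on $p$, $\eps$, and $\delta_p'$."

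\vspace{4pt}

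\noindent The main obstacle is bookkeeping rather than conceptual: I must track that the tolerance $\delta_p$ in the conclusion depends \emph{only} on the $\delta_p'$ and the absolute constants, not on $\eps$, while the \emph{particular} $\delta$ chosen for a given $\eps$ is allowed to depend on $\eps$. This matches the quantifier structure in the definition of regularity under smallness assumptions, where $\delta_p$ is an upper envelope and $\delta \in (0,\delta_p)$ is chosen per $\eps$. The one genuine point to verify carefully is that $\phi = \psi - d\l$ is a cocycle so that the uniform-approximation hypothesis applies to it; everything else is composition of estimates and a union bound.
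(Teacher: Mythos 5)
Your proposal is correct and follows essentially the same route as the paper: apply Proposition~\ref{prop:L0-to-Linf} to write the $\delta$-small cocycle as $\phi + d\l$ with $\phi$ uniformly small and $\l$ small in probability, feed $\phi$ into the uniform-approximation hypothesis, and combine the two coboundaries. Your write-up merely makes explicit some steps the paper leaves implicit (that $\phi$ is a cocycle, the union bound for smallness of $\phi' + \l$, and the per-$\eps$ versus envelope quantifier bookkeeping), all of which are handled correctly.
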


\begin{proof}
If $\psi \in \Z^p(G,A)$ is $\delta$-small for some $\delta \in (0,\eta_p)$, then Proposition~\ref{prop:L0-to-Linf} gives that $\psi = \phi + d\l$ for some $\rm{O}_p(\delta^{2^{-p}})$-small $\l$ and $\phi$ with $\rho_\infty(0,\phi) \lesssim_p \delta^{2^{-p}}$.  Hence provided we choose $\delta_p$ small enough to guarantee that this implies $\rho_\infty(0,\phi) \leq \delta'_p$, we may now apply the regularity of $A$ under uniform approximation to $\phi$ to deduce that $\phi$ and hence also $\psi$ are the boundaries of small cochains, as required. \qed
\end{proof}

It is also worth recording at once the following easy connexion.

\begin{lem}\label{lem:reg-implies-Haus}
If $G$ is compact and $(A,\rho,T)$ is regular under smallness assumptions then $\B^p(G,A)$ is a clopen subgroup of $\C^p(G,A)$ for every $p\geq 0$ for the topologies of convergence in probability, and hence the groups $\rmH_\m^p(G,A)$ are discrete and countable in their quotient topologies.
\end{lem}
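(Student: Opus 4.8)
The plan is to exploit the quantitative structure provided by regularity under smallness assumptions to show that the coboundaries form an open subgroup, from which everything follows by standard topological group theory. First I would recall the definition: regularity under smallness assumptions supplies a sequence $\delta_p > 0$ such that for each $p$ and each target tolerance $\eps > 0$ there is a threshold $\delta \in (0,\delta_p)$ so that every $\delta$-small cocycle in $\Z^p(G,A)$ is the coboundary of an $\eps$-small cochain. The key observation is that this immediately gives a whole neighborhood of $0$ inside $\Z^p(G,A)$ that is contained in $\B^p(G,A)$: take any fixed $\eps$ (say $\eps = 1$, or indeed any value at all), obtain the corresponding $\delta$, and note that the open ball $\{\psi : \rho_0(0,\psi) < \delta\} \cap \Z^p(G,A)$ consists entirely of coboundaries, since every such $\psi$ is $\delta$-small and hence equals $d\phi$ for some cochain $\phi$.

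The main steps then proceed as follows. Since $\B^p(G,A)$ is a subgroup of $\Z^p(G,A)$ that contains an open neighborhood of the identity (in the subspace topology $\Z^p(G,A)$ inherits from $\C^p(G,A)$), it is an open subgroup: for any $b \in \B^p(G,A)$, the translate $b + (\{\psi : \rho_0(0,\psi) < \delta\} \cap \Z^p(G,A))$ is an open neighborhood of $b$ lying inside $\B^p(G,A)$, using that translation is a homeomorphism and that $\B^p$ is closed under the group operation. An open subgroup of a topological group is automatically closed, being the complement of the union of its nontrivial cosets, each of which is open; hence $\B^p(G,A)$ is clopen in $\Z^p(G,A)$. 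I should note that the statement as phrased asserts $\B^p(G,A)$ is clopen in all of $\C^p(G,A)$, which requires remembering that $\Z^p(G,A)$ is itself closed in $\C^p(G,A)$ (it is the kernel of the continuous map $d$), so a set that is clopen in $\Z^p$ and whose complement-in-$\C^p$ includes the open set $\C^p \setminus \Z^p$ is indeed clopen in $\C^p$.

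From clopenness of $\B^p(G,A)$ the topological conclusions are immediate. The quotient topology on $\rmH_\m^p(G,A) = \Z^p(G,A)/\B^p(G,A)$ is discrete precisely because $\B^p$ is open: the preimage of any singleton coset is an open coset of $\B^p$, so every point of the quotient is open. For countability, I would invoke that $\Z^p(G,A)$ is a Polish (hence second countable, hence separable) group, and a discrete quotient of a separable space is countable — concretely, the cosets of the open subgroup $\B^p$ form an open cover of the separable space $\Z^p(G,A)$ by pairwise disjoint sets, and separability forces this cover to be countable. I do not anticipate a genuine obstacle here; the only point requiring care is the bookkeeping around the inherited topologies, namely confirming that the subspace topology on $\Z^p(G,A)$ from $\C^p(G,A)$ is the one for which the smallness-based neighborhoods are genuinely open and that passing between clopenness in $\Z^p$ and in $\C^p$ is handled correctly. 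The substantive content — that coboundaries swallow a whole neighborhood of zero — is handed to us directly by the definition of regularity under smallness assumptions, so the lemma is essentially a translation of that quantitative input into topological language.
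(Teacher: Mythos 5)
Your core argument is exactly the paper's: regularity under smallness assumptions places a whole $\rho_0$-ball of $\Z^p(G,A)$ inside $\B^p(G,A)$, so $\B^p(G,A)$ is an open subgroup of $\Z^p(G,A)$, hence also closed there; discreteness of the quotient is then immediate, and countability follows from separability of the Polish group $\Z^p(G,A)$ just as you say. That part is correct and is all that the rest of the paper ever uses.

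The one step that does not work is your attempted upgrade from ``clopen in $\Z^p(G,A)$'' to ``clopen in $\C^p(G,A)$''. Closedness does transfer ($\B^p$ closed in $\Z^p$, which is closed in $\C^p$, gives $\B^p$ closed in $\C^p$), but openness does not pass from a closed, non-open subgroup to the ambient group: your criterion would equally prove that $\{0\}$ is open in $\bbR$ because it is clopen in the closed subgroup $\{0\}$. Indeed the stronger claim is simply false in general: take $G = \bbT$ and $A = \bbZ$ with trivial action (which is regular under smallness assumptions by the argument in the proof of Theorem B). Then $\B^1(G,A) = \Z^1(G,A) = \{0\}$, since measurable homomorphisms $\bbT\to\bbZ$ are continuous and hence trivial, while indicator functions of small subsets of $\bbT$ are nonzero cochains arbitrarily close to $0$ in probability; so $\B^1$ is not open in $\C^1(\bbT,\bbZ)$. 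Any $\C^p$-neighbourhood of $0$ contains cochains that are not cocycles, and these can never be coboundaries, so $\B^p$ can be open in $\C^p$ only in degenerate situations. You should be aware that this defect originates in the lemma's own phrasing, and the paper's proof commits the same imprecision (it asserts without justification that $\B^p(G,A)$ is open in $\C^p(G,A)$); the statement that is both true and needed is precisely the one your main argument establishes, namely that $\B^p(G,A)$ is clopen in $\Z^p(G,A)$ and therefore closed (though not open) in $\C^p(G,A)$, which suffices for $\rmH_\m^p(G,A)$ to be discrete and countable.
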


\begin{proof}
The result is trivial for $p=0$ since $\B^0(G,A) := (0)$, and regularity under smallness assumptions implies that $\B^p(G,A)$ is an open subset of $\C^p(G,A)$ for each $p$.  Since open subgroups are necessarily also closed, this implies discreteness of the quotient.  Countability now follows because $\C^p(G,A)$ is Polish (recall that we always assume $G$ is second countable), and hence its image $\C^p(G,A)/\B^p(G,A)$ must still be separable for the quotient topology. \qed
\end{proof}

We are now just one auxiliary proposition away from a proof of Theorem B.

\begin{prop}\label{prop:inv-lims}
Suppose that $(G_m)_m$, $(\pi^m_k)_{m\geq k}$ is an inverse system of compact metric groups with inverse limit $G$, $(\pi_m)_m$, and that $(A,\rho,T)$ is a Polish $G$-module with action factorizing through each $\pi_m$ and with the property that it is regular under smallness assumptions with the same error tolerances $\delta_p$ when interpreted as a $G_m$-module for every $m$. Then the direct limit of inflation homomorphisms
\[\lim_{m\rightarrow}\rm{inf}^p_{\pi_m}:\lim_{m\rightarrow}\rmH_\m^p(G_m,A)\to \rmH_\m^p(G,A)\]
is an isomorphism for all $p \geq 1$ (where we interpret $A$ as a $G$- or $G_m$-module by factorizing the action through $\pi_m$ if necessary).
\end{prop}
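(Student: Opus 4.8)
The plan is to prove the two inclusions separately: surjectivity and injectivity of the direct limit of inflation maps. The key mechanism is that regularity under smallness assumptions (which we are handed as a hypothesis, uniform across all the $G_m$) forces $\B^p$ to be clopen, and hence each cocycle representative can be chosen in a quantitatively controlled way. Crucially, cocycles and coboundaries on $G$ can be approximated by ones that are genuinely lifted from some finite stage $G_m$, because the $\s$-algebras on $G_m^p$ (pulled back via $\pi_m^{\times p}$) increase to the full Borel $\s$-algebra on $G^p$, as $G = \varprojlim G_m$.

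First I would address surjectivity. Given a class in $\rmH_\m^p(G,A)$, represented by $\psi \in \Z^p(G,A)$, the idea is to approximate $\psi$ in the topology of convergence in probability by a cochain $\psi_m := \bbE[\psi \mid \F_m]$, the conditional expectation onto the sub-$\s$-algebra generated by $\pi_m^{\times p}$ (this requires some care since $A$ need only be Polish, not a vector space; one instead uses a martingale/approximation argument, or directly invokes that Borel functions on $G^p$ are approximated in probability by functions measurable with respect to $\pi_m^{\times p}$ for large $m$). The point is that such $\psi_m$ is genuinely a function on $G_m^p$, but it may fail to be a cocycle. However, $d\psi_m \to d\psi = 0$ in probability, so for $m$ large $d\psi_m$ is as small as we like. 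Applying regularity under smallness assumptions on the \emph{group $G_m$} to the cocycle $\psi - \psi_m$ (whose coboundary is controlled), one corrects $\psi_m$ by a small coboundary to obtain a genuine cocycle on $G_m$ that is cohomologous in $\rmH_\m^p(G,A)$ to $\psi$. This exhibits $[\psi]$ in the image of $\rm{inf}^p_{\pi_m}$.

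For injectivity, suppose a class coming from some $\rmH_\m^p(G_m,A)$, represented by $\psi \in \Z^p(G_m,A)$ lifted to $G$, becomes trivial in $\rmH_\m^p(G,A)$, so that $\psi \circ \pi_m^{\times p} = d\l$ for some $\l \in \C^{p-1}(G,A)$. I would approximate $\l$ by $\l_{m'} := \bbE[\l \mid \F_{m'}]$ for some larger index $m' \geq m$, so that $\l_{m'}$ is lifted from $G_{m'}$ and $d\l_{m'}$ approximates $\psi$ in probability. Then $\psi - d\l_{m'}$, viewed as a cocycle on $G_{m'}$, is small; regularity under smallness assumptions on $G_{m'}$ makes it the coboundary of a small cochain, so after a further correction $\psi$ already becomes a coboundary at the finite stage $G_{m'}$. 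Hence the class dies in $\rmH_\m^p(G_{m'}, A)$, which is exactly what is required for injectivity of the direct limit.

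The main obstacle will be the approximation step in the Polish (non-vector-space) setting: conditional expectation is not literally available, so I expect the real work to be in justifying that an arbitrary cocycle on $G$ can be approximated in probability by cochains measurable with respect to $\pi_m^{\times p}$, together with the commutation of this approximation with the coboundary operator $d$ well enough that the correcting coboundaries $d\l$, $d\psi_m$ become small. Once the approximations are in hand, the corrections are routine applications of Lemma~\ref{lem:reg-implies-Haus} and the hypothesis of uniform regularity; the uniformity of the tolerances $\delta_p$ across all $G_m$ is what lets the argument run at every finite stage without the estimates degenerating.
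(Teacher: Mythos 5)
You have essentially reproduced the paper's proof: both arguments approximate the given object ($\psi$ for surjectivity, the trivializing cochain $\l$ for injectivity) in probability by a cochain lifted from a finite stage (this is exactly the paper's use of the fact that the $\pi_m$ generate the Borel $\s$-algebra of $G$ — no conditional expectation is needed), observe that the resulting error in the cocycle equation is a small cocycle lifted from that stage, repair it by the uniform regularity hypothesis applied on $G_m$ (in degree $p+1$ for surjectivity, degree $p$ for injectivity at a later stage $m'$), and finish with one further appeal to regularity under smallness, just as in the paper. The only phrase to repair is ``one corrects $\psi_m$ by a small coboundary'': since $d(d\a)=0$, subtracting a coboundary from $\psi_m$ can never turn it into a cocycle; what regularity actually supplies (applied to the small $(p+1)$-cocycle $d\psi_m$, descended to $G_m$) is a small cochain $\psi_3\in\C^p(G_m,A)$ with $d\psi_3=d\psi_m$, so that $\psi_m-\psi_3$ is the desired genuine cocycle lifted from $G_m$.
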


\begin{proof}\quad\emph{Surjectivity}\quad Suppose that $\psi \in \Z^p(G,A) \subseteq \C^p(G,A)$ and that $\eps,\delta > 0$.  Since the epimorphisms $\pi_m$ together generate the whole Borel $\s$-algebra of $G$ there are some $m$ and $\psi_1 \in \C^p(G_m,A)$ such that $\psi - \psi_1\circ \pi_m^{\times p}$ is $\delta$-small.

A priori we are not necessarily able to take $\psi_1$ to be a cocycle.  However, setting $\psi_2 := \psi - \psi_1\circ \pi_m^{\times p}$ and $\l := d\psi_2 = -d\psi_1\circ \pi_m^{p+1}$ (using that $d\psi = 0$), it follows that $\l$ is both $\rm{O}_p(\delta)$-small and is lifted from some $\l' \in \Z^{p+1}(G_m,A)$, which must therefore also be $\rm{O}_p(\delta)$-small.  Provided $\delta$ was chosen sufficiently small depending on $p$ and $\eps$, this new smallness bound will be less than $\delta_{p+1}$ and regularity under smallness assumptions will give that $\l' = d\psi_3$ for some $\eps$-small $\psi_3 \in \C^p(G_m,A)$.

Putting these approximations together we deduce that $\psi_4 := \psi_1+\psi_3$ is a cocycle and that $\psi - \psi_4\circ \pi_m^{\times p}$ is $(\delta + \eps)$-small, so, making one last appeal to having chosen $\eps$ and $\delta$ sufficiently small, the regularity of $A$ under smallness assumptions now gives that $\psi$ is actually cohomologous to the finite-dimensional cocycle $\psi_4\circ\pi_m^{\times p}$.  This proves surjectivity.

\emph{Injectivity}\quad Now suppose that $\psi \in \Z^p(G_m,A)$ is such that $\psi\circ \pi_m^{\times p} = d\k$ for some $\k \in \C^{p-1}(G,A)$; we must show that we can obtain a similar coboundary equation upon lifting only to some finite $m' \geq  m$.  For any $\delta > 0$ we may pick some finite $m'\geq m$ and $\k_1 \in \C^{p-1}(G_{m'},A)$ such that $\k_2 := \k - \k_1\circ\pi_{m'}^{\times (p-1)}$ is $\delta$-small.  It follows that
\[\phi := d\k_2 = \psi\circ \pi_m^{\times p} - d\k_1\circ \pi_{m'}^{\times p}\]
is an $\rm{O}_p(\delta)$-small member of $\B^p(G,A)$ that is lifted from $\C^p(G_{m'},A)$, and hence that
\[\phi' := \psi\circ (\pi^{m'}_m)^{\times p} - d\k_1\]
is an $\rm{O}_p(\delta)$-small member of $\Z^p(G_{m'},A)$.   Therefore provided we chose $\delta$ sufficiently small, the regularity of $A$ under smallness assumptions shows that it is actually a coboundary, and hence the same is true of $\psi\circ(\pi^{m'}_m)^{\times p}$, as required. \qed
\end{proof}

\noindent\emph{Proof of Theorem B}\quad\emph{Discrete modules}\quad Combining Lemma~\ref{lem:promoting-regularity-assumptions} and Proposition~\ref{prop:inv-lims}, it suffices to prove that any discrete Abelian group $A$ is regular under uniform approximation with error tolerances not depending on the compact acting group. However, if we pick each $\delta_p$ to equal some $\delta > 0$ so small that $\{a \in A:\ \rho(0,a) \leq \delta\} = \{0\}$ in $A$, then the only $\psi \in \C^p(G,A)$ with $\rho_\infty(0,\psi) \leq \delta_p$ are the maps with constant value $0$, for which the assertion is trivial.

\emph{Tori}\quad If $A = \bbT^d = (\bbR/\bbZ)^d$, then our easiest approach is to use the case of discrete groups via the switchback isomorphisms $\rmH_\m^p(G,\bbT^d) \cong \rmH_\m^{p+1}(G,\bbZ^d)$ (which are easily seen to respect our regularity assumptions) resulting from the long exact sequence and the vanishing $\rmH_\m^p(G,\bbR^d) = (0)$ that we obtained in Theorem A. \qed

\vspace{7pt}

A similar argument also gives Theorem C.

\vspace{7pt}

\noindent\emph{Proof of Theorem C}\quad\emph{Surjectivity}\quad We need to show that each $\psi \in \Z^p(G,A)$ is cohomolous to a cocycle taking values in $A_m$ for some finite $m$.  Let $\eps,\delta > 0$. Since $\psi$ is measurable there are some $m$ and $\psi_1 \in \C^p(G,A_m)$ such that $m_{G^p}(\{\psi \neq \psi_1\}) < \delta$.  Letting $\psi_2 := \psi - \psi_1$, it follows that $d\psi_2 = -d\psi_1$ takes values in $A_m$ and is $\rm{O}_p(\delta)$-small.  Using Lemma~\ref{lem:promoting-regularity-assumptions} and arguing as in the discrete-modules case of Theorem B, provided $\delta$ was chosen sufficiently small depending on $p$ and $\eps$ this implies that $d\psi_2 = d\psi_3$ for some $\eps$-small $\psi_3 \in \C^p(G,A_m)$.

Setting $\psi_4 := \psi_1 + \psi_3$, this is now a $p$-cocycle taking values in $A_m$ and we have that $\psi - \psi_4$ is still $(\delta + \eps)$-small. Therefore provided $\eps$ and $\delta$ were chosen sufficiently small, we may argue a second time as in the discrete-modules case of Theorem B to obtain that $\psi = \psi_4 + d\g$ for some $\g \in \C^{p-1}(G,A)$, as required.

\emph{Injectivity}\quad Now suppose that $\psi \in \Z^p(G,A_m)$ and that $\psi = d\g$ for some $\g \in \C^{p-1}(G,A)$; we must show that in fact $\g$ can be assumed to takes values in $A_{m'}$ for some finite $m' \geq m$.  However, letting $\bar{\g}:= \g + A_m$ be the image under the quotient map $A\to A/A_m$, this is now a Borel $(p-1)$-cocycle taking values in the discrete module $A/A_m$.  Therefore by the above-proved surjectivity there is some $\b\in \C^{p-2}(G,A)$ such that $\bar{\g} - d\bar{\b}$ takes values in $A_{m'}/A_m$ for some finite $m'$, and hence $\g - d\b$ takes values in $A_{m'}$.  Since $dd\b = 0$, we may always replace $\g$ with $\g - d\b$ in our representation of $\psi$ as a coboundary, and so complete the proof.  \qed

\vspace{7pt}

The following simple corollary of the above proof is a first step towards Theorem D, and will be useful shortly.

\begin{cor}\label{cor:disc-cohom-grps}
If $G$ is compact and $A$ is a discrete or toral $G$-module then the cohomology groups $\rmH_\m^p(G,A)$ are discrete and countable in their quotient topologies.
\end{cor}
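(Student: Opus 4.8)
The plan is to reduce both the discrete and toral cases to the two continuity-type theorems just proved, observing that the desired discreteness has essentially already been established along the way. For the discrete case, the key observation is that the proof of Theorem B for discrete modules proceeded by showing that any discrete Abelian $G$-module $A$ is regular under uniform approximation (via the trivial choice of $\delta_p$ making $\{a\in A:\rho(0,a)\leq\delta_p\}=\{0\}$), and Lemma~\ref{lem:promoting-regularity-assumptions} then promotes this to regularity under smallness assumptions. Thus for compact $G$ and discrete $A$ the module $(A,\rho,T)$ is regular under smallness assumptions, and Lemma~\ref{lem:reg-implies-Haus} immediately yields that each $\rmH_\m^p(G,A)$ is discrete and countable in its quotient topology. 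So the discrete case is a direct invocation of the machinery already in place.

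For the toral case $A=\bbT^d$, the natural route is to transfer the discreteness across the switchback isomorphism $\rmH_\m^p(G,\bbT^d)\cong\rmH_\m^{p+1}(G,\bbZ^d)$ coming from the long exact sequence attached to $\bbZ^d\into\bbR^d\onto\bbT^d$, together with the vanishing $\rmH_\m^p(G,\bbR^d)=(0)$ from Theorem A. Since $\bbZ^d$ is discrete, the previous paragraph shows $\rmH_\m^{p+1}(G,\bbZ^d)$ is discrete and countable, and one would argue that the switchback isomorphism is a homeomorphism for the quotient topologies (this compatibility is exactly what was noted parenthetically in the proof of Theorem B when the switchback maps were said to ``respect our regularity assumptions''). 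Countability then transfers immediately, and discreteness transfers provided the isomorphism is topological.

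The main obstacle I anticipate is precisely the verification that the switchback map is a topological isomorphism rather than merely an algebraic one, since the statement concerns the \emph{quotient topologies} on the cohomology groups. Concretely, one must check that the explicit operator $Q$ (and the quotient maps induced by $\bbZ^d\into\bbR^d\onto\bbT^d$ on cochain spaces) are continuous for the topologies of convergence in probability, and that the connecting homomorphism they implement descends to a homeomorphism on cohomology. Lemma~\ref{lem:dim-shift-reg-1} already records the key quantitative continuity estimate for $Q$, so the ingredients are available; the work is to assemble them into the statement that the switchback respects the quotient topologies. Alternatively, and perhaps more cleanly, one could bypass the topological compatibility question entirely by showing directly that $\bbT^d$ is itself regular under smallness assumptions: since $\bbR^d$ admits continuous (indeed linear) sections over $\bbT^d$, small $\bbT^d$-valued cocycles lift to small $\bbR^d$-valued cochains whose coboundaries are small, and one exploits the vanishing of $\rmH_\m^*(G,\bbR^d)$ to conclude, after which Lemma~\ref{lem:reg-implies-Haus} applies verbatim. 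I would favour this second approach since it keeps everything inside the ``regular under smallness assumptions'' framework and avoids any delicate argument about topologies on the switchback isomorphism.
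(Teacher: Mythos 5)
Your proposal is correct and follows essentially the same route as the paper: the paper's own proof is exactly your first paragraph (regularity under uniform approximation supplied by the proof of Theorem B, promoted by Lemma~\ref{lem:promoting-regularity-assumptions} and fed into Lemma~\ref{lem:reg-implies-Haus}), invoked for both the discrete and toral cases at once. Your favoured direct argument for $\bbT^d$ --- lifting small toral cocycles to $\bbR^d$ and exploiting the vanishing of $\rmH_\m^\ast(G,\bbR^d)$ --- is precisely the content of the paper's parenthetical claim in the proof of Theorem B that the switchback isomorphisms ``respect our regularity assumptions''; only note that $\bbR^d\onto\bbT^d$ admits just \emph{local} continuous sections (no global continuous section exists, and certainly no linear one), which is all the lifting argument actually requires.
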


\begin{proof}
The proof of Theorem B gives regularity under uniform approximations for these modules, and Lemma~\ref{lem:promoting-regularity-assumptions} converts this into the conditions needed for Lemma~\ref{lem:reg-implies-Haus}. \qed
\end{proof}

\subsection{Counterexamples among locally compact groups}\label{sec:loccpt}

We now offer some examples of the failure of analogs of Theorems B and C among locally compact groups.

Of course, any discrete group that is not compactly-generated such as the free group $\rm{F}_\infty$ can lead to terrible behaviour.  However, we will find that such phenomena occur also among groups of the form $\bbT^{\bbN}\times \G$ with $\G$ a finitely-generated discrete group.  These constructions rely on the following purely discrete result.

\begin{lem}\label{lem:locally-zero-cocycles}
There exists a finitely-generated group $\G$ with the property that for every finite subset $F \subset \G$ there is a non-trivial cocycle $\psi \in \Z^2(\G,\bbZ)$ with $\psi|_{F\times F} = 0$.
\end{lem}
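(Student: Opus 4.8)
The plan is to exhibit $\G$ explicitly as the wreath product $\bbZ \wr \bbZ$ and to write down, for each $n \geq 1$, an explicit integer $2$-cocycle $\Psi_n$ that is non-trivial but whose ``support'' can be pushed arbitrarily far out, so that it vanishes on any prescribed box $F \times F$ once $n$ is large enough. Concretely, I would let $B = \bigoplus_{j \in \bbZ}\bbZ$ with standard basis $(e_j)_j$, let $\bbZ$ act on $B$ by the shift $k\cdot e_j := e_{j+k}$, and set $\G := B \rtimes \bbZ$, writing its elements as $(u,k)$ with $u = \sum_j u_j e_j$ finitely supported. This $\G$ is finitely generated (by the single lamp $(e_0,0)$ and the shift $(0,1)$), which is the only reason the wreath product is used in place of the non-finitely-generated group $B$ itself. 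The payoff of building on $B$ is that it contains infinitely many ``independent commuting pairs'' $\{e_0,e_n\}$ marching off to infinity, and these are what will carry the required non-trivial classes.

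First I would introduce, for each $n\ge 1$, the bilinear form $\psi_n(u,v) := \sum_{j} u_j v_{j+n}$ on $B$ (a finite sum, since $u,v$ have finite support) and extend it to $\G$ by
\[ \Psi_n\big((u,k),(v,l)\big) := \psi_n(u,\, k\cdot v). \]
A direct manipulation, using only the bilinearity of $\psi_n$ and its shift-invariance $\psi_n(k\cdot u,\,k\cdot v) = \psi_n(u,v)$, shows that $\Psi_n$ satisfies the inhomogeneous $2$-cocycle identity for the trivial $\G$-action, so that $\Psi_n \in \Z^2(\G,\bbZ)$. This verification is the one genuinely computational step: the shift-twist placed on the second argument is exactly what makes the terms involving the $t$-direction cancel.

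Next I would establish non-triviality using the elementary fact that every coboundary is symmetric on a commuting pair: if $\Psi_n = d\beta$ then $\beta(a)+\beta(b)-\beta(ab) = \beta(b)+\beta(a)-\beta(ba)$ whenever $ab=ba$, so $\Psi_n(a,b)=\Psi_n(b,a)$ would be forced. Taking the commuting pair $a=(e_0,0)$, $b=(e_n,0)$ one computes $\Psi_n(a,b)=1$ and $\Psi_n(b,a)=0$, so $\Psi_n$ is not a coboundary; note this pair need not lie in $F$, so non-triviality holds for all $n$ at once. For the vanishing, given a finite $F\subset\G$ I would choose $N,K$ so that every $(u,k)\in F$ has $u$ supported in $[-N,N]$ and $|k|\le K$. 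Since $\Psi_n\big((u,k),(v,l)\big)=\sum_j u_j v_{j+n-k}$, a term can be non-zero only if $j\in[-N,N]$ and $j+n-k\in[-N,N]$, which forces $|n-k|\le 2N$ and hence $n\le 2N+K$. Thus any $n> 2N+K$ gives $\Psi_n|_{F\times F}=0$ while $\Psi_n$ remains a non-trivial class, which is precisely the assertion.

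The main obstacle is conceptual rather than computational. One must first recognize that no cocycle inflated from a finite quotient can possibly work: such classes are ``global'' and, since $F$ may generate $\G$, cannot be trivialized on $F$ without becoming trivial outright. The non-trivial classes therefore have to arise from genuinely local data, here the escaping commuting pairs $\{e_0,e_n\}$ living in the (infinitely generated) Schur multiplier of $\G$, and the role of the wreath-product structure is precisely to package infinitely many such pairs into a single finitely generated group. Once one is led to the correct group and the correct shift-twisted cocycle $\Psi_n$, both required properties drop out of the two short computations above.
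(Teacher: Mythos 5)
Your proof is correct, and it takes a genuinely different route from the paper's. The paper's argument is soft and non-constructive: it takes $\G = \rm{F}_2\ast_\Lambda\rm{F}_2$ with $\Lambda = \langle s_2^ns_1s_2^{-n}:n\in\bbN\rangle \cong \rm{F}_\infty$, computes $\rmH_2(\G,\bbZ)\cong\bbZ^{\oplus\infty}$ by Mayer--Vietoris, deduces from the universal-coefficient sequence that $\rmH^2(\G,\bbZ)$ surjects onto the direct product $\bbZ^\infty$, and so extracts an infinite linearly independent family of classes $\psi_1,\psi_2,\ldots$; then, for a given finite $F$, the restrictions $\psi_i|_{F\times F}$ live in the finite-rank group $\bbZ^{F\times F}$, so some nonzero integer combination of the $\psi_i$ vanishes on $F\times F$ while remaining cohomologically nonzero. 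No cocycle is ever written down. You instead work in $\bbZ\wr\bbZ$ and exhibit everything explicitly, and all three of your computations check out: the cocycle identity for $\Psi_n\big((u,k),(v,l)\big) := \psi_n(u,k\cdot v)$ reduces, exactly as you say, to bilinearity plus the shift-invariance $\psi_n(k\cdot u,k\cdot v)=\psi_n(u,v)$; non-triviality follows since coboundaries of a trivial module are symmetric on commuting pairs, while $\Psi_n\big((e_0,0),(e_n,0)\big)=1 \neq 0 = \Psi_n\big((e_n,0),(e_0,0)\big)$; and since $\Psi_n\big((u,k),(v,l)\big)=\sum_j u_jv_{j+n-k}$, supports in $[-N,N]$ and $|k|\le K$ force every term to vanish once $n>2N+K$. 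What the paper's approach buys is robustness: it needs no candidate cocycle, only the homological fact that $\rmH^2(\G,\bbZ)$ has infinite rank together with a pigeonhole argument on finite-rank restriction maps, and it applies verbatim to any finitely generated group with that property. What yours buys is that the class, the commuting pair witnessing non-triviality, and the threshold $n>2N+K$ are all explicit and quantitative, and it shows the phenomenon already occurs in a finitely generated metabelian (hence amenable) group, whereas the paper's example is an amalgam of free groups.
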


\begin{proof}
Doubtless there are many ways to construct such a group $\G$; the method recorded here was indicated to us by Nicolas Monod.

Let $\rm{F}_2 = \langle s_1,s_2\rangle$ be a free group on two generators, and let
\[\Lambda := \langle s_2^ns_1s_2^{-n}:\ n\in \bbN\rangle \leq \rm{F}_2,\]
so $\Lambda \cong \rm{F}_\infty$.  Now let $\G := \rm{F}_2\ast_\Lambda\rm{F}_2$, so this is certainly finitely-generated. The Mayer-Vietoris sequence for homology (Corollary 7.7 in Brown~\cite{Bro82}) gives an exact sequence
\[\cdots \to \rmH_2(\Lambda,\bbZ)\to \rmH_2(\rm{F}_2,\bbZ)\oplus \rmH_2(\rm{F}_2,\bbZ)\to \rmH_2(\G,\bbZ) \to \rmH_1(\Lambda,\bbZ) \to \cdots.\]
On the one hand, $\rmH_1(\Lambda,\bbZ) \cong \rmH_1(\rm{F}_\infty,\bbZ)\cong \bbZ^{\oplus \infty}$ (direct sum), while on the other $\rmH_2(\rm{F}_2,\bbZ) = (0)$ and $\rmH_1(\rm{F}_2,\bbZ) \cong \bbZ^2$ (see Example 1 in Section 2.4 of~\cite{Bro82}), and so $\rmH_2(\G,\bbZ)$ is isomorphic to the kernel of some homomorphism $\bbZ^{\oplus \infty}\to \bbZ^4$ and so must also be isomorphic to $\bbZ^{\oplus\infty}$.

Next, a special case of the Universal Coefficient Formula (which we may apply in our setting since the Borel-cochains cohomology of a discrete group is simply its classical cohomology) gives an exact sequence
\[0\to \rm{Ext}^1_\bbZ(\rmH_1(\G,\bbZ),\bbZ)\to \rmH^2(\G,\bbZ)\to \rm{Hom}(\rmH_2(\G,\bbZ),\bbZ)\to 0.\]
Here we have $\rm{Hom}(\rmH_2(\G,\bbZ),\bbZ) \cong \rm{Hom}(\bbZ^{\oplus\infty},\bbZ) \cong \bbZ^\infty$ (direct product), and so it follows that $\rmH^2(\G,\bbZ) = \Z^2(\G,\bbZ)/\B^2(\G,\bbZ)$ admits an epimorphism onto $\bbZ^\infty$.  Therefore we can find some $\psi_1,\psi_2,\ldots \in \Z^2(\G,\bbZ)$ such that the sequence $\psi_i + \B^2(\G,\bbZ)$ is linearly independent in $\rmH^2(\G,\bbZ)$. 

Let $F\subset \G$ be finite and let $\rho:\Z^2(\G,\bbZ)\to \bbZ^{F\times F}:\psi\mapsto \psi|_{F\times F}$ be the corresponding restriction map. The sequence $\rho(\psi_1),\rho(\psi_2),\ldots \in \bbZ^{F\times F}$ cannot be linearly independent since the target group has rank $|F|^2 < \infty$.  Therefore there is some non-zero integer combination among the $\psi_i$ which is cohomologically nonzero but vanishes on $F\times F$, as required. \qed
\end{proof}

\begin{prop}\label{prop:noB}
Identify $\bbZ^m$ with $\bbZ^m\oplus \{0\}\subseteq \bbZ^n$ whenever $m\leq n$, and similarly identify these with initial-coordinate subgroups of $\bbZ^{\oplus\infty}$.  With this agreed, let $A_n := \bbZ^n$ so that $A_1 \subset A_2 \subset \cdots$ is an increasing sequence of discrete Abelian groups with direct limit $A := \bbZ^{\oplus\infty}$.

Then for $\G$ as in the preceding lemma and for $A$ endowed with the trivial $\G$-action, the direct limit of the inclusion homomorphisms $\rmH^p(\G,A_m)\to \rmH^p(\G,A)$ is not surjective.
\end{prop}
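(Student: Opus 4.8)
The plan is to stack the cocycles furnished by Lemma~\ref{lem:locally-zero-cocycles} as the successive coordinates of a single $\bbZ^{\oplus\infty}$-valued $2$-cocycle, arranged so that the stacking is coordinatewise finite, and then to show that no finite truncation can capture its class by projecting onto one sufficiently late coordinate. Thus it suffices to exhibit the failure of surjectivity in degree $p = 2$.

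First I would fix an exhaustion $F_1 \subseteq F_2 \subseteq \cdots$ of the countable group $\G$ by finite subsets with $\bigcup_k F_k = \G$, and for each $k$ use Lemma~\ref{lem:locally-zero-cocycles} to choose a cohomologically non-trivial $\psi_k \in \Z^2(\G,\bbZ)$ with $\psi_k|_{F_k \times F_k} = 0$. I then define $\Psi:\G^2 \to A = \bbZ^{\oplus\infty}$ by declaring its $k$-th coordinate to be $\psi_k$. The crucial point --- and the place where the vanishing conditions of the lemma are used --- is that $\Psi$ really lands in the \emph{direct sum} rather than the full product: for any pair $(g,h)$, both entries lie in some $F_{k_0}$ and hence in every later $F_k$, so $\psi_k(g,h) = 0$ for all $k \geq k_0$ and $\Psi(g,h)$ has only finitely many non-zero coordinates. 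Since the coboundary operator acts coordinatewise and each $\psi_k$ is a cocycle, $\Psi \in \Z^2(\G,A)$.

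Next I would show that $[\Psi]$ lies in the image of none of the inclusion-induced maps $\rmH^2(\G,A_m) \to \rmH^2(\G,A)$, which is to say $\Psi$ is cohomologous in $A$ to no cocycle valued in $A_m$. Suppose, for a contradiction, that for some $m$ the class $[\Psi]$ is represented by a cocycle valued in $A_m = \bbZ^m \oplus \{0\}$. Let $\pi_{m+1}:A \to \bbZ$ be the projection onto the $(m+1)$-st coordinate, a homomorphism of trivial $\G$-modules, and consider the induced map $(\pi_{m+1})_\ast$ on cohomology. On the one hand $(\pi_{m+1})_\ast$ sends to $0$ the class of any cocycle valued in $A_m$, since $\pi_{m+1}$ vanishes on $A_m$, so it would send $[\Psi]$ to $0$; on the other hand $\pi_{m+1}\circ \Psi = \psi_{m+1}$, so $(\pi_{m+1})_\ast[\Psi] = [\psi_{m+1}] \neq 0$ by the non-triviality clause of the lemma. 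This contradiction shows that $[\Psi]$ escapes every finite stage, and hence the direct limit map is not surjective.

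The only genuinely delicate point is the well-definedness of $\Psi$ as a map into the direct sum: this is exactly what the local-vanishing hypothesis of Lemma~\ref{lem:locally-zero-cocycles} is engineered to guarantee, and it is what lets an infinite family of individually non-trivial integral classes be assembled into a single class of $A$ that is visibly spread across infinitely many coordinates. Everything else --- the coordinatewise cocycle identity and the naturality of the induced maps under coordinate projections --- is routine.
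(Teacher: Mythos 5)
Your proof is correct and follows essentially the same route as the paper: stack the locally-vanishing cocycles from Lemma~\ref{lem:locally-zero-cocycles} into a single $\bbZ^{\oplus\infty}$-valued cocycle (landing in the direct sum precisely because of the vanishing on $F_k\times F_k$), then rule out every finite stage coordinatewise. The paper compresses your projection argument into the phrase ``arguing coordinate-wise,'' and your use of $(\pi_{m+1})_\ast$ is exactly the intended justification.
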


\begin{proof}
Let $F_1 \subset F_2 \subset \cdots$ be an exhaustion of $\G$ by finite subsets, and for each $n$ let $\psi_n\in \Z^2(\G,\bbZ)$ be a non-trivial cocycle that vanishes on $F_n\times F_n$.  Now the cocycle
\[(g,h)\mapsto (\psi_1(g,h),\psi_2(g,h),\ldots)\]
takes values in $A$ (since for any fixed $(g,h)$ the above sequence is eventually zero), but arguing coordinate-wise this map is clearly not obtained from an $A_n$-valued cocycle for any $n$. \qed
\end{proof}

We will now apply this proposition in a setting where the modules themselves are equal to the inner cohomology groups appearing in an LHS spectral sequence.  This will lead to an example of an inverse limit under which the $\bbT$-valued cohomology is not continuous.

\begin{prop}\label{prop:noA}
If $\G$ is as in the preceding lemma and for each $N$ we let $\pi_N:\bbT^\infty\times \G\to \bbT^N\times \G$ be the obvious coordinate projection, then the inverse limit of the inflation maps
\[\inf_{\pi_N}:\rmH_\m^3(\bbT^N\times \G,\bbT)\to \rmH_\m^3(\bbT^\infty\times \G,\bbT)\]
is not surjective.
\end{prop}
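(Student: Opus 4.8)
The plan is to reduce this to the failure of surjectivity already established in Proposition~\ref{prop:noB}, using the Lyndon--Hochschild--Serre spectral sequence for the split central extension
\[\bbT^\bbN \into \bbT^\bbN\times\G \onto \G.\]
Its kernel $\bbT^\bbN$ is compact and the target $\bbT$ is toral, so by Corollary~\ref{cor:disc-cohom-grps} the groups $\rmH_\m^q(\bbT^\bbN,\bbT)$ are discrete, in particular Hausdorff, and the spectral sequence of~\cite{Moo76(gr-cohomIII)} is available in the form
\[E_2^{p,q} = \rmH^p(\G,\rmH_\m^q(\bbT^\bbN,\bbT)) \Longrightarrow \rmH_\m^{p+q}(\bbT^\bbN\times\G,\bbT)\]
(all coefficients carrying trivial $\G$-actions, since the product is direct). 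First I would record the two low-degree coefficient groups that matter. In degree $1$, automatic continuity of crossed homomorphisms together with Pontryagin duality gives $\rmH_\m^1(\bbT^\bbN,\bbT) = \widehat{\bbT^\bbN} \cong \bbZ^{\oplus\infty}$, with the coordinate characters as a basis. In degree $2$ one finds $\rmH_\m^2(\bbT^\bbN,\bbT) = 0$ (e.g.\ from the exponential sequence $\bbZ\into\bbR\onto\bbT$ and the vanishing $\rmH_\m^\ast(\bbT^\bbN,\bbR)=0$ of Theorem~A; the commutator bicharacter of any central $\bbT$-extension of $\bbT^\bbN$ is forced to vanish because $\mathrm{Hom}(\bbT^\bbN,\widehat{\bbT^\bbN})=0$, and $\bbT$ is injective as an abelian group).

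Next I would isolate the term $E_2^{2,1} = \rmH^2(\G,\bbZ^{\oplus\infty})$, which is exactly the group appearing in Proposition~\ref{prop:noB} with $A=\bbZ^{\oplus\infty}$, and show it survives to $E_\infty$. No differential enters it, since the only candidate source is $E_2^{0,2}=\rmH_\m^2(\bbT^\bbN,\bbT)=0$; and the only differential leaving it, $d_2\colon E_2^{2,1}\to E_2^{4,0}=\rmH^4(\G,\bbT)$, vanishes because the extension is split: the section $\G\into\bbT^\bbN\times\G$ makes inflation $\rmH^\ast(\G,\bbT)\to\rmH_\m^\ast(\bbT^\bbN\times\G,\bbT)$ split injective, so the bottom row receives no differentials. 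Hence $E_\infty^{2,1} = \rmH^2(\G,\bbZ^{\oplus\infty})$ is a subquotient of $\rmH_\m^3(\bbT^\bbN\times\G,\bbT)$, and there is a concrete class mapping onto the distinguished cocycle $C=(\psi_1,\psi_2,\ldots)$ of Proposition~\ref{prop:noB}: writing $\chi_j$ for the $j$-th coordinate character of $\bbT^\bbN$ and pulling back along the two projections, one may take
\[\omega := \sum_{j\geq 1}(\chi_j\circ\mathrm{pr}_1)\cup(\psi_j\circ\mathrm{pr}_2^{\times 2}),\]
the coefficients being paired by the multiplication $\bbT\times\bbZ\to\bbT$. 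This sum is finite at each argument because $C$ is $\bbZ^{\oplus\infty}$-valued, so $\omega$ is a genuine (indeed continuous) $3$-cocycle whose image in $E_\infty^{2,1}$ is $[C]$.

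Finally I would run the inflation comparison. The projection $\pi_N$ induces a morphism from the analogous spectral sequence for $\bbT^N\times\G$, and on $E_2^{2,1}$ it is the map $\rmH^2(\G,\bbZ^N)\to\rmH^2(\G,\bbZ^{\oplus\infty})$ induced by the inclusion $A_N=\bbZ^N\into\bbZ^{\oplus\infty}=A$, since $\pi_N^\ast\colon\widehat{\bbT^N}\to\widehat{\bbT^\bbN}$ is precisely the inclusion of the first $N$ coordinate characters. Because the intermediate term $E_\infty^{1,2}=\rmH^1(\G,\rmH_\m^2(\bbT^\bbN,\bbT))$ vanishes and the transition maps on $E_\infty^{0,3}$ are injective, a short filtration chase shows that any class in the image of $\inf_{\pi_N}$ has its $E_\infty^{2,1}$-projection lying in the image of $\rmH^2(\G,A_N)$. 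Now Proposition~\ref{prop:noB} — together with the coordinatewise nontriviality of the $\psi_j$, which shows that $[C]$ is not merely non-$A_N$-valued but not even cohomologous to any $A_N$-valued cocycle — says that $[C]$ lies in the image of no inclusion $\rmH^2(\G,A_N)\to\rmH^2(\G,A)$. Hence $[\omega]$ is inflated from no finite level, and the direct limit of the maps $\inf_{\pi_N}$ is not surjective.

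The main obstacle is the spectral-sequence bookkeeping of the last two paragraphs: one must verify the degeneration at the $(2,1)$ spot (resting on $\rmH_\m^2(\bbT^\bbN,\bbT)=0$ and the splitting of the extension) and, more delicately, that inflation acts on the surviving subquotient $E_\infty^{2,1}$ exactly as the module inclusion $A_N\into A$ does on $\rmH^2(\G,-)$, so that the obstruction of Proposition~\ref{prop:noB} transfers verbatim. Confirming that the spectral sequence of~\cite{Moo76(gr-cohomIII)} genuinely applies here — which is precisely why the Hausdorffness guaranteed by Corollary~\ref{cor:disc-cohom-grps} is needed — is the other point requiring care.
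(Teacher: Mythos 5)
Your argument is correct, and its skeleton is the same as the paper's: both run the Moore/LHS spectral sequence $E_2^{p,q}=\rmH^p(\G,\rmH_\m^q(\bbT^\bbN,\bbT))$, legitimized by the Hausdorffness supplied by Corollary~\ref{cor:disc-cohom-grps}, and transfer the obstruction of Proposition~\ref{prop:noB} through the $(2,1)$-entry. But the middle of your proof genuinely differs. The paper fills in the whole second tableau by borrowing Proposition~\ref{prop:cpt-and-disc} together with Hofmann--Mostert's formula $\rmH_\m^q(G,\bbT)\cong\rm{Sym}^{(q+1)/2}\hat{G}$ (odd $q$) and $(0)$ (even $q$), and then treats $E_3^{2,1}$ as the kernel of a connecting map into $\rmH^1(\G,\hat{G}\odot\hat{G})$, arguing that this kernel acquires new elements because that target is continuous under the inverse limit. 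You instead need only the elementary computations $\rmH_\m^1(\bbT^\bbN,\bbT)=\hat{\bbT^\bbN}$ and $\rmH_\m^2(\bbT^\bbN,\bbT)=(0)$, note that with the standard LHS convention the only differential leaving $E_2^{2,1}$ lands in the bottom row $E_2^{4,0}=\rmH^4(\G,\bbT)$ (the paper's stated bidegree $(-1,2)$ is transposed relative to its own indexing), and kill it by split-injectivity of inflation from $\G$; thus $E_\infty^{2,1}$ is \emph{all} of $\rmH^2(\G,\bbZ^{\oplus\infty})$ and Proposition~\ref{prop:noB} applies verbatim. This buys two things: standard differential bookkeeping with no delicate ``kernel versus continuity of the target'' step, and independence from the comparison-theorem input (Theorem E for compact groups), which the paper must cite with care to avoid circularity. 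Your explicit cup-product witness $\omega$ is a pleasant bonus but not logically necessary, since any preimage of $[C]$ in $F^2\rmH_\m^3(\bbT^\bbN\times\G,\bbT)$ serves in the filtration chase. Three small patches: justify the injectivity of the transition maps on $E_\infty^{0,3}$ (they are restrictions of the inflations $\rmH_\m^3(\bbT^N,\bbT)\to\rmH_\m^3(\bbT^\bbN,\bbT)$, which are split injective because $\bbT^\bbN\onto\bbT^N$ splits); record that the chase also needs $E_\infty^{1,2,(N)}=(0)$ at each finite level, which holds for the same reason as at level $\infty$; and replace ``$\bbT$ is injective as an abelian group'' by injectivity of $\bbT$ in the category of locally compact Abelian groups, since the extensions classified by $\rmH_\m^2$ are topological.
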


\begin{proof}
We view the groups $\bbT^N \times \Gamma$ and $\bbT^\infty \times \Gamma$ as group extensions by $\Gamma$ of the compact subgroups $\bbT^n$ and $\bbT^\infty$ respectively, and then use the LHS spectral sequences for these extensions to tease apart the structure of $\rmH^\ast_\m(\bbT^N \times \Gamma,\bbT)$ and $\rmH^\ast_\m(\bbT^\infty \times \Gamma,\bbT)$. (Recall that these spectral sequences for the theory $\rmH^\ast_\m$ are introduced and explained in~\cite{Moo76(gr-cohomIII)}.)  We therefore have a family of spectral sequences $E^{p,q,(N)}_r$ indexed by $N = 1,2,\ldots,\infty$, which for $r=2$ are given by
\[E^{p,q,(N)}_2 = \rmH^p(\G,\rmH_\m^q(\bbT^N,\bbT))\quad\quad\hbox{and}\quad\quad E^{p,q,(\infty)}_2 = \rmH^p(\G,\rmH_\m^q(\bbT^\infty,\bbT)),\]
and which abut to the canonical gradings of $\rmH_\m^\ast(G\times \G,\bbT)$ for $G = \bbT^N$ and $G = \bbT^\infty$ respectively.  In order that these spectral sequences exist at all we need to know that each of the cohomology groups $\rmH^q_\m(G,\bbT)$ is Hausdorff in its natural topology: this is just Corollary~\ref{cor:disc-cohom-grps}.

The groups $\rmH^q_\m(G,\bbT)$ can be computed exactly as follows. By Theorem A, for any compact group $G$ one has $\rmH_\m^q(G,\bbR) = (0)$ for $q\geq 1$, and so the long exact sequence arising from the presentation $\bbZ\into \bbR\onto \bbT$ collapses to give
\[\rmH_\m^q(G,\bbT) \cong \rmH^{q+1}_\m(G,\bbZ) \quad \forall q\geq 1.\]
Borrowing the special case of Theorem E for compact groups from Proposition~\ref{prop:cpt-and-disc} later in the paper (where it will be proved without reference to the current argument), this right-hand side is equal to $\rmH^{q+1}_{\rm{cs}}(G,\bbZ)$, and now combining this with the calculations in Theorem V.1.9 of Hofmann and Mostert~\cite{HofMos73} we have
\[\rmH_\m^q(G,\bbT)\cong\left\{\begin{array}{ll}\rm{Sym}^{(q+1)/2}\hat{G}&\quad\hbox{if}\ q\geq 1\ \hbox{odd}\\ (0)&\quad\hbox{if}\ q\geq 1\ \hbox{even}.\end{array}\right.\]

Hence for $G$ equal to either $\bbT^N$ and $\bbT^\infty$, the second tableau above appears as follows (continuing in both directions in the obvious way)

\vspace{7pt}

\begin{small}
\begin{center}
$\phantom{i}$\xymatrix{ \vdots & \vdots & \vdots & \vdots \\
(0) & (0) & (0) & (0) & \cdots  \\
\hat{G}\odot\hat{G} & \rmH^1(\G,\hat{G}\odot\hat{G}) & \rmH^2(\G,\hat{G}\odot\hat{G}) & \rmH^3(\G,\hat{G}\odot\hat{G}) & \cdots\\
(0) & (0) & (0) & (0) & \cdots\\
\hat{G} & \rmH^1(\G,\hat{G}) & \rmH^2(\G,\hat{G}) & \rmH^3(\G,\hat{G}) & \cdots\\
\bbT & \rmH^1(\G,\bbT) & \rmH^2(\G,\bbT) & \rmH^3(\G,\bbT) & \cdots\\}
\end{center}
\end{small}

\vspace{7pt}

The connecting maps here, whose homology will give rise to the third tableau, have bi-degree $(2,-1)$.  The diagonal $p+q = 3$ will abut to the grading of $\rmH_\m^3(G\times\G,\bbT)$, and it is clear that under the inflation maps $\inf_{\pi_N}^3$ this grading for $\rmH_\m^3(\bbT^N\times \G,\bbT)$ is carried into the grading for $\rmH_\m^3(\bbT^\infty\times \G,\bbT)$.

The entry $E^{3,0}_3$ in the third tableau obtained from the above is the kernel of the connecting map $\rmH^3(\G,\bbT)\to (0)$, hence stabilizes after the third tableau, and it also agrees with $E^{3,0,(N)}_3$ for every $N$.

On the other hand, the entry $E^{2,1}_3$ will be the kernel of the connecting map $\rmH^2(\G,\hat{G}) \to \rmH^4(\G,\bbT)$ (modulo the image of the map $(0) \to \rmH^2(\G,\hat{G})$, which of course is zero).  By construction of the spectral sequence, anything in the image of this connecting map vanishes when it is inflated to a class in $\rmH^4_\m(G\times \G, \bbT)$; but on the other hand, since $G\times \G$ is a direct product, the inflation maps $\rmH^\ast_\m(\G,\bbT) \to \rmH^\ast_\m(G\times \G,\bbT)$ are injective.  Therefore this connecting map is zero, and hence $E^{2,1}_3 = E^{2,1}_2 = \rmH^2(\G,\hat{G})$. Moreover, for $r\geq 3$ the differentials into and out of $E^{2,1}_r$ both vanish because their domains or ranges are outside the first quadrant $p \geq 0$, $q \geq 0$, so $E^{2,1}_r$ actually stabilizes to $\rmH^2(\G,\hat{G})$.

However, this group $\rmH^2(\G,\hat{G})$ is either $\rmH^2(\G,\hat{\bbT^N})$ or $\rmH^2(\G,\hat{\bbT^\infty})$ depending on $G$, and the previous proposition has shown that the latter is \emph{strictly} larger than the union of the images of the former under the inclusion maps $\hat{\bbT^N}\circ\pi_N\subset \hat{\bbT^\infty}$.

It follows that $E^{2,1,(\infty)}_r$ contains entries that are not lifted from any $E^{2,1,(N)}_r$, and these elements persist in the resulting grading of $\rmH_\m^3(\bbT^\infty \times \G,\bbT)$.  This proves the asserted non-surjectivity. \qed
\end{proof}

\vspace{7pt}

\noindent\emph{Remark}\quad It is worth noting the r\^ole played by Proposition~\ref{prop:noB} in Proposition~\ref{prop:noA}.  It seems that this could be reversed: if a group $\G$ can be shown to satisfy the conclusion of Theorem C, then any extension of a compact group by $\G$ will admit an analog of Theorem B.  This is because the LHS spectral sequence together with Theorem B for compact groups effectively convert the inverse limit of the base groups into a direct limit of cohomology groups within each entry of the second tableau, to which the conclusion of Theorem C can be applied.  We will not give a careful proof of this here. \fin

\vspace{7pt}

We suspect that examples also exist in which the direct limit of inflation maps is not injective, but have none to hand.  On the other hand, from the automatic continuity of crossed homomorphisms it is easily seen that
\[\rmH_\m^1(G,A) = \lim_{n\leftarrow}\rmH_\m^1(G_n,A)\]
under the direct limit of inflation maps whenever $G_n = G/K_n$ for some decreasing sequence of compact normal subgroups $K_n \unlhd G$ with $\bigcap_{n\geq 1}K_n = \{1\}$.

\begin{ques}\label{ques:thm-B-fail}
Are there examples of locally compact, second countable, compactly-generated groups $G$ for which Theorem B fails in degree $2$ for toral or discrete targets? \fin
\end{ques}

One might hope to recover a version of Theorems B or C for locally compact groups subject to some additional assumptions.

\begin{ques}\label{ques:B-or-C-with-extra-assumptions}
Does Theorem B or C hold if $G$ is locally compact, compactly-generated and second countable, $G_n = G/K_n$ for some decreasing sequence of compact subgroups $K_n \unlhd G$, and if
\begin{itemize}
\item $G$ is connected, or
\item the first quotient $G/K_1$ is a discrete $\rm{FP}_\infty$ group (see Chapter VIII of Brown~\cite{Bro82})? \fin
\end{itemize}
\end{ques}

\section{Agreement with other cohomology theories}\label{sec:other-theories}

We have already examined the continuous-cochains theory $\rmH^\ast_\rm{cts}$ in the proof Theorem A above.  We now turn to the comparison of $\rmH^\ast_\m$ with $\rmH^\ast_\rm{ss}$ and $\rmH^\ast_\rm{cs}$.

Recall that Wigner defines $\rmH^\ast_\rm{ss}$ (he denotes it $\hat{\rmH}^\ast$) using sheaves on semi-simplicial spaces as follows.  Given $G$ and $A$, let $G^\bullet$ be the associated semi-simplicial space given by the Cartesian powers of $G$ and the coordinate-deletion maps, and let $\cal{A}^\bullet$ be the semi-simplicial sheaf of germs of continuous maps $G^\bullet \to A$.  By forming the (second) canonical resolution of each $\cal{A}^n$ one obtains a double complex, and the theory $\rmH^\ast_\rm{ss}$ is the cohomology of the resulting total complex, which may be computed by the associated spectral sequence.  This construction is given in detail in Section 3 of~\cite{Wig73}, and also nicely explained in Section 2 of Lichtenbaum~\cite{Lic09}.  Importantly, the definition of this theory works just the same on the larger category of all semi-simplicial Abelian sheaves on $G^\bullet$, where it can be seen as the semi-simplicial version of the usual construction of derived functors using injective resolutions of sheaves.

On the other hand, letting
\[G\into E_G \stackrel{\pi}{\onto} B_G\]
be a choice of classifying principal $G$-bundle and classifying space, one can let $\cal{A}$ be the sheaf on $B_G$ defined by
\[\cal{A}(U) = \rm{Map}(\pi^{-1}(U),A)^G,\]
where $\rm{Map}$ denotes the set of continuous maps.  In case $A$ is discrete the resulting sheaf cohomology will be denoted by $\rmH^\ast_\rm{cs}(G,A)$.  The choice of $B_G$ is unique only up to homotopy, but we know that the sheaf cohomology is homotopy-invariant in case $A$ is discrete (see, for instance, Chapter 5 of Schapira's lecture notes~\cite{Sch07}). 

More na\"\i vely, one could consider a more classical cohomology theory (such as singular or \v{C}ech) of $B_G$ with coefficients in the Polish group $A$.  This coincides with the sheaf-theoretic cohomology in case $A$ is discrete, but in general it ignores the topology of $A$, and so it seems inappropriate for comparison with $\rmH_\m^\ast(G,A)$.  Indeed, by rights it should compute $\rmH_\m^\ast(G,A_\rm{d})$, where $A_\rm{d}$ is the group $A$ with its discrete topology, although this may not be Polish.  In Section~\ref{sec:further-discuss} we will give an example offering different points of view on this shortcoming.

In this section we will first recall various comparison maps between these theories, and will then establish some cases in which those maps define isomorphisms.  The comparison maps are shown in the commutative diagram below. As before, we always assume that $G$ is locally compact and second countable and that $A$ is Polish, but in this diagram a dashed arrow indicates a map whose definition requires some additional assumptions on $G$ or $A$.

\begin{center}
$\phantom{i}$\xymatrix{\rmH^\ast_\rm{cts}\ar^\phi[rr]\ar_{\iota_1}[drr] & & \rmH^\ast_{\rm{ss}}\ar@{-->}^{\psi}[rr]\ar^\a[d] & & \rmH^\ast_\rm{cs}\ar@{-->}^{\iota_2}[dll]\\
&& \rmH_\m^\ast
}
\end{center}

\subsection{Constructing the comparison maps}

Consider first the arrows emanating from $\rmH^\ast_\rm{cts}$.  Of course $\iota_1$ arises from the obvious injection of continuous cochains.  The map $\phi$ comes from the edge map of the spectral sequence for $\rmH^\ast_\rm{ss}$, as explained in Section 3 of Wigner~\cite{Wig73}.  Segal constructs a similar map $\rmH^\ast_\rm{cts}\to \rmH^\ast_\rm{Seg}$ for his category of modules in Section 3 of~\cite{Seg70}.

The map $\a$ is constructed by induction on degree using dimension-shifting and the universality property of $\rmH^\ast_\rm{ss}(G,\cdot)$ on its category of definition (semi-simplicial sheaves of Abelian groups on $G^\bullet$). An easy check shows that $\rmH^0_\rm{ss}(G,A) \cong A^G \cong \rmH_\m^0(G,A)$, giving a natural isomorphism in degree $0$.  Now suppose we have constructed homomorphisms $\rmH^r_\rm{ss}(G,\,\cdot\,)\to \rmH_\m^r(G,\,\cdot\,)$ for all $r \leq p$, and consider $\rmH^{p+1}_\rm{ss}(G,A)$.  This is defined using the sheaf $\cal{A}^{p+1}$ of germs of continuous functions $G^{p+1} \to A$.

Any element of $\Z^{p+1}(G,A)$ is effaced by the inclusion $A \into \C(G,A)$, but it may alternatively by effaced by the further inclusion into $E(A) := \C_\rm{cts}(G,\C(G,A))$.  Let $F(A) := E(A)/A$, and let $\cal{E}(E)$ be the semi-simplicial sheaf corresponding to $E(A)$.  Then $\A$ and $\E(A)$ fit into a short exact sequence
\[\A\into \E(A)\onto \t{\F}(A)\]
where $\t{\F}(A)$ is the quotient sheaf.  Importantly, this is not in general equal to the sheaf $\F(A)$ corresponding to $F(A)$: $\t{\F}(A)$ corresponds to those members of $\F(A)$ that locally admit continuous lifts to members of $\E(A)$.  Thus there is a canonical inclusion $\t{\F}(A)\to \F(A)$.

Now, dimension-shifting within the categories appropriate to $\rmH_\m^\ast$ and $\rmH^\ast_\rm{ss}$ gives canonical switchback isomorphisms
\[\rmH_\m^{p+1}(G,A)\cong \rmH_\m^p(G,F(A))\quad\hbox{and}\quad \rmH^{p+1}_\rm{ss}(G,A)\cong \rmH^p_\rm{ss}(G,\t{\F}(A)).\]

The inclusion $\t{\F}(A)\to \F(A)$ gives a map $\rmH^p_\rm{ss}(G,\t{\F}(A))\to \rmH^p_\rm{ss}(G,\F(A))$, and by induction we have already constructed a map from this to $\rmH^p_\m(G,F(A))$.  Composing these with the above isomorphisms defines the map $\rmH^{p+1}_\rm{ss}(G,A) \to \rmH_\m^{p+1}(G,A)$.

In general there is no reverse construction from $\rmH_\m^\ast$ to $\rmH^\ast_\rm{ss}$, because $\t{\F}(A)$ is included in $\F(A)$ and not the other way around.  Indeed, the possible disagreement  $\t{\F}\neq \F$ means that $\rmH^\ast_\rm{ss}$ may not have long exact sequences when restricted to the category of Polish modules, and so the universality of $\rmH_\m^\ast$ within this category is no help in making such a comparison.

If, however, some conditions are imposed to ensure that any continuous function from an open subset of some $G^n$ to $F(A)$ can be lifted locally around any point to a continuous function to $E(A)$, then we obtain an equality of sheaves $\F = \t{\F}$, and so the above construction actually defines an isomorphism of cohomology theories. Such an argument appears in Wigner~\cite{Wig73}, where the special subcategory of Polish $G$-modules having `property F' is isolated and shown to give isomorphisms $\rmH_\m^\ast(G,A)\cong \rmH^\ast_\rm{ss}(G,A)$ when $G$ is finite-dimensional and $A$ has property F.  Property F is precisely a condition on the local lifting of continuous maps from locally Euclidean spaces, and this gives rise to an isomorphism of the necessary sheaves as above.

(This reasoning is also responsible for the agreement of $\rmH^\ast_\rm{ss}$ with $\rmH^\ast_\rm{Seg}$ on Segal's category of modules: that category consists of modules that are locally contractible k-spaces, and one can show that in this category cocycles may be effaced by inclusions into cohomologically-zero modules that admit local cross-sections.  Put concisely, this amounts to an exact embedding of Segal's category, where all exact sequences have local cross-sections, into the semi-simplicial category, and the cohomology functors $\rmH^\ast_\rm{Seg}$ and $\rmH^\ast_\rm{ss}$ are isomorphic on this subcategory.)

In case $A$ is discrete, the map $\psi$ is constructed by Wigner in~\cite{Wig73} by an entry-wise comparison of the spectral sequence computing $\rmH^\ast_\rm{ss}(G,A)$ with a spectral sequence for $\rmH^\ast(B_G,A)$.  A similar argument appears in Section 3 of Segal~\cite{Seg70}, and given that $\rmH^\ast_\rm{ss}$ and $\rmH^\ast_\rm{Seg}$ agree for discrete $A$ these constructions are easily seen to coincide. These maps will be less important in the sequel, so we simply refer the reader to those references.

Finally, a map $\iota_2$ may be defined directly for discrete target modules $A$.  Its construction is very elementary compared with the above sheaf-theoretic arguments, but it is not formally needed for any of our isomorphism results and requires more preparation, so it will be deferred to Section~\ref{sec:further-discuss} below.

\subsection{First new instances of isomorphism}

Theorem E is our main result giving conditions for these comparison maps to be isomorphisms, and its proof will not appear until Section~\ref{sec:compar-complete}
as it will require some other developments of the next section.  Here we will recall or prove a few simpler results relating these various theories.

The principal cases of isomorphism obtained either here or in previous papers are listed in the table below.

\vspace{7pt}

\begin{center}
\begin{tabular}[center]{p{4cm}ccc}
\hline
\quad\quad\quad\quad\quad\quad\quad $\rmH_\m^\ast = \ldots$ & $\rmH^\ast_\rm{cts}$ & $\rmH^\ast_\rm{ss}$ & $\rmH^\ast_\rm{cs}$\\
\hline
$A$ Fr\'echet & $\surd$ & $\surd$ & $\times$\\
\hline
$G$ totally disconnected & $\surd$ & $\surd$ & $\times$\\
\hline
$A$ discrete & $\times$ & $\surd$ & $\surd$\\
\hline
$A$ locally compact and locally contractible & $\times$ & $\surd$ & $\times$\\
\hline
$G$ finite-dimensional, $A$ has property F & $\times$ & $\surd$ & $\times$\\
\hline
\end{tabular}
\end{center}

\vspace{7pt}

In this table a tick indicates that the assumptions on $(G,A)$ listed for that row suffice to imply equality between $\rmH_\m^\ast$ and another theory, and a cross indicates that they do not suffice (although of course equality may still hold under more restrictive assumptions).

The first row of this table is given by Theorem A and the corresponding results for $\rmH^\ast_\rm{ss}$, which follow from the degeneration of the second tableau of the spectral sequence that defines it; this latter is explained for the proof of Theorem 3 in~\cite{Wig73}, or also for Lemma 2.5 and Corollary 2.6 in Lichtenbaum~\cite{Lic09}.  There results together amount to the Fr\'echet-module case of Theorem E.

The isomorphism $\rmH_\m^\ast(G,A) \cong \rmH^\ast_\rm{cts}(G,A)$ for totally disconnected $G$ appears in~\cite{Wig73} as Theorem 1.  That argument is analogous to our proof of Theorem A above.  The point is that if $G$ is totally disconnected then any continuous map from $G^n$ to a Polish $G$-module $A$ can be continuously lifted through any surjection from another Polish $G$-module onto $A$, by Michael's selection theorems from~\cite{Mic56,Mic57}.  As a result the functor $\rmH^\ast_\rm{cts}(G,\cdot)$ gives a long exact sequence for any short exact sequence in $\sfP(G)$ and so agrees with $\rmH_\m^\ast(G,\cdot)$ by Buchsbaum's criterion. The demonstration that $\a$ is an isomorphism in this case follows the same principle (and also bears comparison with Wigner's proof of his Theorem 2), although here effaceability holds in general and the point is to show that $\rmH^\ast_\rm{ss}(G,\cdot)$ is still a cohomological functor when restricted to the category of $G$-modules. Another appeal to the Michael selection theorem shows that for an exact sequence as in~(\ref{eq:sh-ex}) the corresponding sequence of semi-simplicial sheaves on $G^\bullet$,
\[0 \to \cal{A}^\bullet \to \cal{B}^\bullet \to \cal{C}^\bullet \to 0,\]
is exact as $G^n$ as totally disconnected for all $n$. Hence $\rmH^\ast_\rm{ss}(G,\cdot)$ is a cohomological functor, and since we know it is also effaceable, it agrees with the other two by Buchsbaum's criterion.

The fifth row of the table above is the conjunction of Theorems 2 and 4 from Wigner~\cite{Wig73}, and we refer the reader there for the proof (and also a precise explanation of Property F). Let us recall for reference that it implies, in particular, isomorphisms
\[\rmH_\m^\ast(G,A) \cong \rmH^\ast_\rm{ss}(G,A) \cong \rmH^\ast_\rm{cs}(G,A)\]
whenever $A$ is discrete and $G$ is finite-dimensional.

The third and fourth rows correspond to the conclusion of Theorem E for locally compact and locally contractible modules.  Their proofs will be completed in Section~\ref{sec:compar-complete}, but before leaving this section we will establish some preliminary special cases of them that will be needed in the mean time.

The first of these is a direct consequence of Theorems B and C.

\begin{prop}\label{prop:cpt-and-disc}
If $G$ is compact and $A$ is discrete then $\rmH_\m^\ast(G,A) \cong \rmH^\ast_\rm{ss}(G,A) \cong \rmH^\ast_\rm{cs}(G,A)$.
\end{prop}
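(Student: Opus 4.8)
The plan is to reduce the statement to the finite-dimensional case recorded in the fifth row of the table and then pass to a limit, using that all three theories are continuous under inverse limits of compact groups and direct limits of discrete modules. First I would appeal to the structure theory of compact groups: since $G$ is compact and second countable, the Peter--Weyl theorem presents it as the inverse limit of a countable system of compact Lie groups $G_m = G/K_m$, where $(K_m)_m$ is a decreasing sequence of closed normal subgroups with $\bigcap_m K_m = \{1\}$. Each $G_m$ is in particular finite-dimensional.

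With this presentation fixed, I would assemble three separate continuity isomorphisms, one for each theory, all over the same directed system. For $\rmH_\m^\ast$ this is exactly the strengthening of Theorem B noted in the Remark following Theorem C: because every orbit of the compact group $G$ on the discrete set $A$ is finite, we have $A = \bigcup_m A^{K_m}$, and combining Theorems B and C gives a natural isomorphism
\[\rmH_\m^p(G,A) \cong \lim_{m\rightarrow}\rmH_\m^p(G_m,A^{K_m}).\]
The same inverse/direct limit is respected by the other two theories for discrete targets --- this is precisely the content of the continuity results of Hofmann and Mostert for $\rmH^\ast_\rm{cs}$~\cite{HofMos73} and of Flach (Proposition 8.1 in~\cite{Fla08}) for $\rmH^\ast_\rm{ss}$ --- so that likewise
\[\rmH^\ast_\rm{ss}(G,A) \cong \lim_{m\rightarrow}\rmH^\ast_\rm{ss}(G_m,A^{K_m}) \qquad\hbox{and}\qquad \rmH^\ast_\rm{cs}(G,A) \cong \lim_{m\rightarrow}\rmH^\ast_\rm{cs}(G_m,A^{K_m}).\]

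At each finite stage the module $A^{K_m}$ is discrete and $G_m$ is finite-dimensional, so Wigner's results as recorded in the fifth row of the table (Theorems 2 and 4 of~\cite{Wig73}, together with the agreement of $\rmH^\ast_\rm{cs}$ with $\rmH^\ast_\rm{ss}$ for discrete targets) supply isomorphisms
\[\rmH_\m^\ast(G_m,A^{K_m}) \cong \rmH^\ast_\rm{ss}(G_m,A^{K_m}) \cong \rmH^\ast_\rm{cs}(G_m,A^{K_m}),\]
implemented by the comparison maps $\a$ and $\psi$ of the diagram. Passing to the direct limit over $m$ and feeding in the three continuity isomorphisms above then yields the claimed isomorphisms for $G$.

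The step I expect to need the most care is the final gluing. To take the direct limit one must know that the comparison maps $\a$ and $\psi$ commute with the inflation homomorphisms $\rm{inf}_{\pi^m_k}$ relating consecutive stages, so that Wigner's stage-wise isomorphisms genuinely form a map of directed systems whose colimit is an isomorphism. This naturality in the group variable is built into the constructions of $\a$ and $\psi$ (both arise functorially, from edge maps of spectral sequences and from dimension-shifting), but it should be checked explicitly against the inflation maps; granting it, the three theories agree on $G$ simply because they agree at every finite stage.
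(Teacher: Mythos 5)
Your proposal is correct and follows essentially the same route as the paper's own proof: reduce to the compact Lie (finite-dimensional) case via Wigner, invoke the continuity properties of Theorems B and C for $\rmH_\m^\ast$ together with the Hofmann--Mostert and Flach continuity results for $\rmH^\ast_\rm{cs}$ and $\rmH^\ast_\rm{ss}$, and ascend a Peter--Weyl inverse sequence of Lie quotients. Your version is merely more explicit about using $A = \bigcup_m A^{K_m}$ and about the naturality of the comparison maps with respect to inflation, points the paper leaves implicit.
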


\begin{proof}
This is already known for compact Lie groups by Wigner's results for finite-dimensional $G$ (see also Section 2 of~\cite{Lic09}).  On the other hand, all three theories have the continuity properties of our Theorems B and C when the target $A$ is discrete: this follows from Proposition III-1.11 of~\cite{HofMos73} for $\rmH^\ast_\rm{cs}$ and Proposition 8.1 and Corollary 7 in~\cite{Fla08} for $\rmH^\ast_\rm{ss}$.  The result now follows for general compact second countable $G$ by ascending some inverse sequence of Lie groups that converges to $G$ (obtained, for example, from the Peter-Weyl Theorem). \qed
\end{proof}

The next proposition gives the special case of Theorem E for connected groups $G$.  It will be needed for the proof of the full versions of those theorems in Section~\ref{sec:compar-complete} below.  The proof will make use of the LHS spectral sequence, and also the remarkable structural result from works of Gleason, Montgomery and Zippin and of Yamabe that any connected locally compact group $G$ is an extension of a compact group by a Lie group (see Theorem 4.6 in Montgomery and Zippin~\cite{MonZip55}):
\begin{eqnarray}\label{eq:H5}
\begin{array}{ccccccccc}1 & \to  & H & \to & G & \to &  G/H & \to & 1\\ &   & \hbox{\small{compact}}&  &  &  &  \hbox{\small{Lie}} &  & \end{array}
\end{eqnarray}

Of course, cohomology for Lie groups is relatively much better understood than for general locally compact groups, especially in the setting of Fr\'echet targets where a host of additional techniques and relations to Lie algebra cohomology are known.  A thorough reference for these matters is Borel and Wallach's book~\cite{BorWal00}.

\begin{prop}\label{prop:ThmE-connected}
If $G$ is a connected locally compact group and $A$ is discrete or Euclidean then the map $\rmH^\ast_\rm{ss}(G,A) \stackrel{\a}{\to} \rmH_\m^\ast(G,A)$ is an isomorphism.
\end{prop}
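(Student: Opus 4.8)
The plan is to treat the two cases separately. When $A$ is Euclidean it is in particular a Fr\'echet module, so the first row of the table above already yields $\rmH^\ast_\rm{ss}(G,A)\cong\rmH_\m^\ast(G,A)$ for \emph{any} l.c.s.c.\ $G$ (via Theorem~A and the degeneration of the defining spectral sequence for $\rmH^\ast_\rm{ss}$). Since both sides are canonically $A^G$ in degree $0$ and $\a$ is a morphism of cohomological functors restricting to the identity there, the uniqueness that underlies Buchsbaum's criterion~\cite{Buc60} forces this isomorphism to be $\a$ itself. Thus only the discrete case requires real work, and here I would use the structural result~(\ref{eq:H5}) to present $G$ as an extension $1\to H\to G\to G/H\to 1$ with $H$ compact and $G/H$ a connected Lie group, in particular finite-dimensional.

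First I would invoke the Lyndon--Hochschild--Serre spectral sequence attached to this extension in each theory. For $\rmH_\m^\ast$ this is the spectral sequence of~\cite{Moo76(gr-cohomIII)}, whose construction requires only that the kernel groups $\rmH_\m^q(H,A)$ be Hausdorff; since $H$ is compact and $A$ is discrete, Corollary~\ref{cor:disc-cohom-grps} shows these groups are discrete and countable, so the hypothesis is met and no appeal to Theorem~D is needed (indeed this proposition precedes the general form of Theorem~D). For $\rmH^\ast_\rm{ss}$ the corresponding spectral sequence may be obtained from the fibration $B_H\to B_G\to B_{G/H}$ of classifying spaces together with the agreement $\rmH^\ast_\rm{ss}=\rmH^\ast_\rm{cs}$ for discrete coefficients, or directly from the Hochschild--Serre machinery in the topos-theoretic reformulation used by Flach~\cite{Fla08}. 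Both take the shape $E_2^{p,q}=\rmH^p_?(G/H,\rmH^q_?(H,A))\Rightarrow\rmH^{p+q}_?(G,A)$, and the naturality of $\a$ together with its compatibility with inflation and the switchback isomorphisms should make it induce a morphism from the $\rm{ss}$-sequence to the $\m$-sequence.

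Granting this, it remains to check that $\a$ is an isomorphism on the $E_2$-pages, whereupon the standard comparison theorem for first-quadrant spectral sequences delivers the isomorphism on the abutments. On the inner coefficients $H$ is compact and $A$ is discrete, so Proposition~\ref{prop:cpt-and-disc} gives $\rmH^q_\rm{ss}(H,A)\cong\rmH^q_\m(H,A)$; by the uniqueness above this is the isomorphism induced by $\a$, it is $G/H$-equivariant by naturality, and the common coefficient module is a discrete countable $G/H$-module. On the outer layer one is then computing $\rmH^p_?(G/H,M)$ for the finite-dimensional $G/H$ and a discrete module $M=\rmH^q(H,A)$, which is exactly the situation of the fifth row of the table (Wigner's Theorems~2 and~4 in~\cite{Wig73}); once more the resulting isomorphism $\rmH^p_\rm{ss}(G/H,M)\cong\rmH^p_\m(G/H,M)$ is the one induced by $\a$. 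Hence $\a$ is an isomorphism entry-by-entry on $E_2$, and the proof concludes.

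The main obstacle I anticipate is not this $E_2$-identification, which is assembled from results already in hand, but rather the claim that $\a$ genuinely induces a \emph{morphism} of the two Lyndon--Hochschild--Serre spectral sequences. Because $\a$ was built abstractly by dimension-shifting and the universal property of $\rmH^\ast_\rm{ss}$, rather than from an explicit double complex, one must verify that it respects the filtrations underlying the two spectral sequences --- equivalently, that it commutes with the relevant inflation and edge homomorphisms. I would address this by checking that $\a$ intertwines the switchback isomorphisms used in its own construction with those driving the dimension-shifting construction of each spectral sequence, so that the comparison is compatible degree by degree; this is the step where the bookkeeping is heaviest.
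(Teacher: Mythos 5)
Your proposal is correct (to the paper's own standard of rigor) and its core is the same as the paper's: for discrete $A$ it uses the Gleason--Montgomery--Zippin presentation~(\ref{eq:H5}), the LHS spectral sequence for $\rmH_\m^\ast$ justified by Corollary~\ref{cor:disc-cohom-grps}, a parallel spectral sequence for $\rmH^\ast_\rm{ss}$, and the entrywise $E_2$-comparison via Proposition~\ref{prop:cpt-and-disc} on the compact kernel and Wigner's finite-dimensional theorem on the Lie quotient. The differences lie in two places. Your handling of the Euclidean case --- disposing of it for arbitrary l.c.s.c.\ $G$ via the Fr\'echet row of the table, before any structure theory --- is a clean shortcut; the paper instead runs that case through the same spectral sequence, using Theorem A to collapse the kernel cohomology. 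More substantively, the paper does not obtain the $\rmH^\ast_\rm{ss}$ spectral sequence from the fibration $B_H\to B_G\to B_{G/H}$ or from Flach's topos-theoretic machinery: it simply observes that the quotient map $G\to G/H$ admits local cross-sections because $G/H$ is a Lie group, which is exactly the hypothesis under which Wigner's semi-simplicial theory carries an LHS spectral sequence (the same mechanism is reused in the proof of Theorem E, where Michael's theorem supplies the cross-section for $G\to G/G_0$). That route is preferable to yours: it stays inside a single framework, so the two $E_2$-pages are directly comparable, and it avoids both the point-set questions about the classifying-space fibration and the identification of Leray--Serre coefficients with $\rmH^\ast_\rm{cs}(G/H,\cdot)$ that your version would require. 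Finally, the compatibility issue you flag at the end --- that $\a$, having been defined abstractly by dimension-shifting, must be shown to induce a morphism of the two spectral sequences --- is genuine, but you need not resolve it more fully than the paper does: its proof likewise concludes from the abstract $E_2$-isomorphisms and leaves the identification of the resulting isomorphism with $\a$ implicit.
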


\begin{proof}
If $G$ is connected then the Gleason-Montgomery-Zippin Theorem gives a presentation as in~(\ref{eq:H5}).  By Corollary~\ref{cor:disc-cohom-grps} or Theorem A, the groups $\rmH_\m^\ast(H,A)$ are discrete in their quotient topologies, and hence by Theorem 9 from~\cite{Moo76(gr-cohomIII)} there is an abutting spectral sequence
\[E_2^{pq} \cong \rmH_\m^p(G/H,\rmH_\m^q(H,A)) \Longrightarrow \rmH_\m^{p+q}(G,A).\]

On the other hand, there is also such a spectral sequence for $\rmH^\ast_\rm{ss}$ because the quotient $G\to G/H$ has local cross-sections (since the quotient is a Lie group), and so the proof is completed by observing that
\[\rmH_\m^q(H,A)\cong \rmH^q_\rm{ss}(H,A)\]
by Proposition~\ref{prop:cpt-and-disc} and
\[\rmH_\m^q(G/H,B) \cong \rmH^p_\rm{ss}(G/H,B)\]
for any discrete module $B$ by Wigner's Theorem 2 concerning finite-dimensional acting groups. \qed
\end{proof}

Similarly to the discussion following the proof of Theorem A, it is unclear whether the theories $\rmH_\m^\ast$ and $\rmH^\ast_\rm{ss}$ must agree for more general contractible targets.

\begin{ques}\label{ques:contractible-disagree-with-ss}
Are there a connected locally compact group $G$ and a non-locally-convex F-space $G$-module $A$, or more generally a contractible Polish module, for which
\[\rmH_\m^p(G,A) \not\cong \rmH^p_\rm{ss}(G,A)\]
for some $p$?
\end{ques}

The completion of Theorem E will require further use of the LHS spectral sequence, but we will not be able to justify that appeal until we have some results guaranteeing that the quotient topologies on some relevant cohomology groups are Hausdorff. These will be obtained in the next section.

Before leaving this section, it seems worth offering some further remarks on possible strategies for analyzing the comparison map between $\rmH_\m^\ast$ and $\rmH^\ast_\rm{ss}$.  For the purpose of this discussion it is simplest to focus on cases in which $\rmH^\ast_\rm{Seg}$ is defined, so that we may discuss this in place of $\rmH^\ast_\rm{ss}$.

Consider our proof of Theorem A.  It was built on a result promising measurable cohomology-class representatives $G^p\to A$ that have some additional regularity (local total boundedness).  From here a simple smoothing argument gave a proof that $\rmH_\m^\ast$ admits the effacement of cocycles within the category of Fr\'echet modules, just as $\rmH^\ast_\rm{cts}$ does.  This means that $\rmH_\m^\ast$ and $\rmH^\ast_\rm{cts}$ define two connected cohomological sequences of functors which both admit effacement within the \emph{same} category, so since they agree in degree zero Buchsbaum's criterion implies that they agree in all degrees.

This is a simple and attractive strategy for comparing $\rmH_\m^\ast$ with another theory: first one proves that measurable cocycle representatives can be found with some additional regularity, and then one deduces that $\rmH_\m^\ast$ has the same universality properties as the other theory within some smaller common category of modules where both are defined.  However, it seems difficult to implement this strategy for comparing $\rmH_\m^\ast$ with $\rmH^\ast_\rm{ss}$ or $\rmH^\ast_\rm{Seg}$. (The instances of agreement proved in~\cite{Wig73} do rely on Buchsbaum's criterion, but not on any regularizing of measurable cocycles.)  Proposition~\ref{prop:basic-smoothness} gives some ability to improve the regularity of cocycles in great generality, but it is not clear that this improvement is good enough to make the link to $\rmH^\ast_\rm{ss}$ or $\rmH^\ast_\rm{Seg}$.  For this reason our final proof of Theorem E will instead need very detailed information about the possible structure of the underlying group $G$ in the form of the Gleason-Montgomery-Zippin Theorem, applied via the LHS spectral sequence.

To explain this difficulty further let us focus on $\rmH^\ast_\rm{Seg}$.  That theory is defined for locally contractible k-space modules, and with exact sequences of such modules being required to allow local cross-sections.  To prove effacement in this category, Segal first embeds such a module $A$ into a larger module $E_\rm{Seg}A$ (which may be identified as the subgroup of finite-valued step functions in $\C([0,1],A)$, although Segal works mostly with a different description coming from~\cite{Seg68}), which is contractible and in which the embedded copy of $A$ has a local cross-section; and thence into the module $\C_\rm{cts}(G,E_\rm{Seg}A)$ which is `soft' in the sense of his theory and hence cohomologically zero.  An important part of Segal's work is to prove that $A$ has a local cross-section in $E_\rm{Seg}A$ (Proposition A.1 in~\cite{Seg70}).

Unfortunately, $E_\rm{Seg}A$ is not Polish. In order to connect $\rmH^\ast_\rm{Seg}(G,A)$ with $\rmH_\m^\ast(G,A)$ for a given module $A$, we would want to efface $A$-valued cocycles by embedding into a \emph{Polish} module which is contractible and within which the copy of $A$ has a local cross-section.  The canonical choice of effacing embedding is $A \into \C(G,A)$, whose target is clearly contractible when $G$ is non-discrete, but for which we do not know whether there is a local cross-section.  Proposition~\ref{prop:basic-smoothness} enables some improvement here: effacement obtains if instead we embed $A$ into the smaller module of locally totally bounded Borel maps $G\to A$, so for instance we could embed into any of the Polish modules $L^p_\rm{loc}(G,A)$ of functions $f:G\to A$ for which the distance function $g\mapsto \rho(0,f(g))$ is locally $p$-integrable.

We have not been able to find any category $\sf{Cat}$ of Polish modules $A$ other than Fr\'echet modules for which such an inclusion $A \into L^p_\rm{loc}(G,A)$ and the resulting quotient $L^p_\rm{loc}(G,A)/A$ are both still objects of $\sf{Cat}$ and such that we can prove that this inclusion has a local cross-section.  Indeed, in most cases the status of a local cross-section seems to be unknown.

For example, if $A = \bbZ$ then a first dimension-shifting would involve $\bbZ\into L^p_\rm{loc}(G,\bbZ)$, which easily does admit a local cross-section, but then the next dimension-shifting would require
\begin{multline*}
L^p_\rm{loc}(G,\bbZ)/\bbZ \into L^p_\rm{loc}\big(G,L^p_\rm{loc}(G,\bbZ)/\bbZ\big)/(L^p_\rm{loc}(G,\bbZ)/\bbZ)\\ \cong L^p_\rm{loc}(G\times G,\bbZ)/(\pi_1^\ast L^p_\rm{loc}(G,\bbZ) + \pi_2^\ast L^p_\rm{loc}(G,\bbZ))
\end{multline*}
where $\pi_i:G\times G\to G$ are the two coordinate projections for $i=1,2$.  Clearly it is unimportant that the underlying space here is $G$; the key issue is captured by the following relative of Question~\ref{ques:R-in-L0}.

\begin{ques}\label{ques:subgp-of-L1(sq)}
Does the closed, contractible subgroup
\[\pi_1^\ast L^1([0,1],\bbZ) + \pi_2^\ast L^1([0,1],\bbZ)\]
admit a local cross-section in $L^1([0,1]^2,\bbZ)$?
\end{ques}

In the case of $\bbZ$, we could alternatively first embed into $\bbR$ to obtain the quotient $\bbT$, but then another dimension-shifting seems to require embedding this into some group such as $\C(G,\bbT)$.  This second embedding also admits local cross-sections (another simple exercise), but once again we do not know whether the \emph{next} dimension-shifting does so.

So the new possibilities for effacement of measurable cocycle-classes that are enabled by Proposition~\ref{prop:basic-smoothness} may not be strong enough to reach the defining conditions of $\rmH^\ast_\rm{Seg}$.  Hence the comparison must be made differently.  For example, Wigner's proof that $\rmH_\m^\ast(G,A)$ and $\rmH^\ast_\rm{ss}(G,A)$ agree for finite-dimensional $G$ and a large subcategory of Polish modules $A$ (including discrete modules) is based on the realization that for such a group $G$, the manipulations underlying the construction of $\rmH^\ast_\rm{ss}$ do not require local cross-sections for exact sequences of possibly-infinite-dimensional modules, but only the weaker assumption that these exact sequences have the homotopy lifting property for finite-dimensional spaces: this is Wigner's `Property F'.  See the proof of Theorem 2 in~\cite{Wig73}; a simple exercise shows that for finite-dimensional $G$ and $A$ having this property, one could similarly make a direct comparison between $\rmH^\ast_\m$ and $\rmH^\ast_\rm{Seg}$.  For many choices of $A$ this more modest lifting property is easy to verify for the exact sequence $A \to \C([0,1],A)\to \C([0,1],A)/A$, so the extra kinds of regularity enabled by Proposition~\ref{prop:basic-smoothness} are not needed for Wigner's development.

Nevertheless, once one has the isomorphism $\rmH_\m^\ast(G,\bbZ)\cong \rmH^\ast_\rm{Seg}(G,\bbZ)$ that will come from Theorem E, in some sense it itself asserts some regularity of cocycle-representatives.  If one implements this isomorphism on the level of cochains, it means that classes in $\rmH_\m^p(G,\bbZ)$ have representatives that are coboundaries of Borel maps
\[G^{p-1}\to \C_\rm{cts}(G,E_\rm{Seg}\bbZ),\]
where we recall again that $E_\rm{Seg}\bbZ$ may be identified with the subgroup of $\C([0,1],\bbZ)$ consisting of the finite-valued step functions. Such a coboundary $\s$ enjoys a (rather subtle) strengthening of the kind of regularity promised by Proposition~\ref{prop:basic-smoothness}: it can have only very `simple' discontinuities as a function on $G^p$.  Some results along these lines can be proved directly using a similar argument to Proposition~\ref{prop:basic-smoothness}, as will be discussed at the end of Section~\ref{sec:further-discuss} below, but it is not yet clear how those results can be related to the comparison theorems.

\section{Some consequences for the quotient topologies of cohomology groups}\label{sec:cohom-top}

The four parts of Theorem D are proved in this section as separated propositions and a corollary.  These results all use the assumption that $G$ is \textbf{almost connected}: that is, that $G/G_0$ is compact, where $G_0 \unlhd G$ is the identity component.  In a sense, this situation is opposed to the case of discrete groups, and it is this special structure that makes the following results possible.

\begin{prop}\label{prop:nice-top}
If $G$ is almost connected and $A$ is a Euclidean space then each $\B^p(G,A)$ is closed in $\Z^p(G,A)$ and $\rmH_\m^p(G,A)$ is a Euclidean space.
\end{prop}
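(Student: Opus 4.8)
The plan is to use Theorem~A to extract the finite-dimensional \emph{algebraic} content from classical continuous cohomology, and then to do the real work of transferring that structure to the measurable quotient topology. Since a Euclidean $A$ is in particular Fr\'echet, Theorem~A gives an abstract isomorphism $\rmH^p_\m(G,A)\cong \rmH^p_\rm{cts}(G,A)$ for every $p$, so I would first show the right-hand side is finite-dimensional. The action on $A=\bbR^d$ is a continuous representation into $\mathrm{GL}_d(\bbR)$, which has no small subgroups; hence by the Gleason--Montgomery--Zippin theorem I may choose a compact normal subgroup $N\unlhd G$ with $G/N$ a Lie group and small enough to lie in the kernel of this representation, so that $N$ acts trivially on $A$. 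Because $\rmH^q_\m(N,A)=(0)$ for $q>0$ by Theorem~A, the associated Lyndon--Hochschild--Serre spectral sequence --- whose use is legitimate here precisely because these kernel groups are trivial, hence Hausdorff --- collapses to give $\rmH^p_\m(G,A)\cong \rmH^p_\m(G/N,A)$, reducing us to an almost connected Lie group $G$. For such a $G$ with maximal compact subgroup $K$, van~Est's theorem identifies $\rmH^\ast_\rm{cts}(G,A)$ with a relative Lie algebra cohomology $\rmH^\ast(\mathfrak{g},K;A)$ (see Borel--Wallach~\cite{BorWal00}), which is finite-dimensional. In the compact-open topology this already makes each $\rmH^p_\rm{cts}(G,A)$ a finite-dimensional Hausdorff topological vector space, with $\B^p_\rm{cts}(G,A)$ closed in $\Z^p_\rm{cts}(G,A)$ by the open mapping theorem for Fr\'echet spaces.

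The remaining and genuinely delicate point is that the \emph{measurable} quotient topology agrees with this Euclidean one. The inclusion of continuous cochains into measurable cochains is continuous (uniform convergence on compacta implies local convergence in measure), so it descends to a continuous bijection $\rmH^p_\rm{cts}(G,A)\to \rmH^p_\m(G,A)$ of quotient topologies; it remains to produce a continuous inverse, which will simultaneously show that $\B^p(G,A)$ is closed. For this I would invoke Proposition~\ref{prop:basic-smoothness} to represent an arbitrary measurable cocycle by a locally totally bounded one, so that it is locally integrable; on such representatives, mollification by convolution against a compactly supported bump function $\eta$ in each $G$-coordinate is continuous into the Fr\'echet space of continuous cochains. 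Arranging this mollification to be a chain map chain-homotopic to the identity --- the homotopy being the averaging operator already used in the proof of Theorem~A --- realizes the inverse comparison on cohomology and exhibits it as continuous from the measurable topology to the Euclidean one. Closedness of $\B^p(G,A)$ then follows, as it is the preimage of $(0)$ under a continuous map to a Hausdorff space.

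The main obstacle is exactly this topology transfer. The space $\C^p(G,A)=L^0(G^p,\bbR^d)$ is a complete metrizable topological vector space that is \emph{not} locally convex --- indeed its continuous dual is trivial --- so none of the soft closed-range or duality criteria available for Fr\'echet spaces can be applied to the linear subspace $\B^p(G,A)$ directly. Closedness must instead be forced by the explicit regularize-then-mollify construction above, and the crux is verifying that these operators really are continuous for the topology of convergence in measure once one has restricted to the locally bounded representatives furnished by Proposition~\ref{prop:basic-smoothness}; without that preliminary regularization convolution fails to be continuous out of $L^0$. A secondary technical point, needed to pass from the connected to the merely almost connected case, is the reduction to a Lie quotient through the compact normal subgroup $N$, which relies on the vanishing supplied by Theorem~A to keep the spectral-sequence step non-circular.
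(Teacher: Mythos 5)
Your first half --- reduction to an almost connected Lie group via a small compact normal subgroup and the LHS spectral sequence, followed by finite-dimensionality of $\rmH^\ast_\rm{cts}(G,A)$ from Lie algebra cohomology --- is correct and is essentially the paper's route (the paper passes from almost connected to connected to Lie in two spectral-sequence steps and cites Hochschild--Mostow rather than van Est; your use of the no-small-subgroups property of $\mathrm{GL}_d(\bbR)$ to make the compact kernel act trivially is a tidy way to ensure $A^N = A$). The genuine gap is in the topology-transfer step, which is where the whole difficulty of the proposition lives. Your proposed inverse comparison, ``regularize by Proposition~\ref{prop:basic-smoothness}, then mollify,'' is not a continuous map on $\Z^p(G,A)$ --- indeed it is not a map at all. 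Proposition~\ref{prop:basic-smoothness} is a class-by-class existence statement obtained from dimension-shifting and measurable selection; it furnishes no operator assigning to each cocycle a locally totally bounded cohomologous one, let alone an operator continuous for convergence in measure. Moreover, the mollification step fails on its own terms: convolution against a bump function is not continuous from the topology of local convergence in measure to that of uniform convergence on compacta, even on functions that are individually bounded. On $G = \bbR$, the functions $\psi_n := n^3 1_{[0,n^{-2}]}$ are each bounded and tend to $0$ in measure, yet $\psi_n \ast \eta$ diverges at every point where $\eta > 0$; continuity would require a \emph{uniform} local bound, which no neighbourhood in the $L^0$-topology supplies, and for a general element of $\Z^p(G,A)$ the convolution integral need not even exist (this is exactly why Theorem A invokes Proposition~\ref{prop:basic-smoothness} before averaging). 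Consequently your final step --- $\B^p(G,A)$ as the preimage of $(0)$ under a continuous map --- has no continuous map to apply it to.

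The paper circumvents precisely this obstruction with descriptive set theory rather than analysis. Once one knows $\B^p(G,A)$ has finite codimension in $\Z^p(G,A)$, one observes that it is analytic (a continuous image of the Polish group $\C^{p-1}(G,A)$), chooses cocycles $e_1,\ldots,e_n$ whose classes form a basis of $\rmH_\m^p(G,A)$, and checks that for every Borel $X \subseteq \bbR^n$ the union of cosets $\B^p(G,A) + \big\{\sum_i a_i e_i :\ (a_1,\ldots,a_n) \in X\big\}$ is both analytic and co-analytic, hence Borel by the separation theorem. The linear projection $\Z^p(G,A) \to \bbR^n$ with kernel $\B^p(G,A)$ is therefore a Borel homomorphism between Polish groups, hence continuous by automatic continuity, which simultaneously gives closedness of $\B^p(G,A)$ and the homeomorphism $\rmH_\m^p(G,A) \cong \bbR^n$. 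This is also the repair for your argument: you do not need a continuous (or even everywhere-defined) inverse built from regularization and smoothing; Borel measurability of a linear left-inverse to the finite-dimensional inclusion suffices, and that comes for free from softness of the descriptive-set-theoretic machinery once algebraic finite-dimensionality is in hand.
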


In the nomenclature of Hochschild and Mostow~\cite{HocMos62}, this asserts that almost connected $G$ are of `finite homology type'. Their definition concerned $\rmH^\ast_\rm{cts}$, but by Theorem A that agrees with $\rmH^\ast_\m$ here.  The proof that follows bears comparison with the discussion following their Theorem 7.1.

\begin{proof}
First observe that it suffices to prove this in case $G$ is connected.  Indeed, letting $G_0 \unlhd G$ be the identity component, if we know that the groups $\rmH^p_\m(G_0,A)$ are Hausdorff then the LHS spectral sequence gives the abutment
\[E_2^{pq} \cong \rmH_\m^p(G/G_0,\rmH^q_\m(G_0,A)) \Longrightarrow \rmH_\m^{p+q}(G,A).\]
Since $G/G_0$ is compact, if each $\rmH^q_\m(G_0,A)$ is Euclidean then all of these entries vanish for $p > 0$ by Theorem A, and so the result for $G$ follows from that for $G_0$.

If $G$ is connected, then by the Gleason-Montgomery-Zippin Theorem it has a compact normal subgroup $H$ such that $G/H$ is Lie, and now another instance of the LHS spectral sequence gives
\[E_2^{pq} \cong \rmH_\m^p(G/H,\rmH^q_\m(H,A)) \Longrightarrow \rmH_\m^{p+q}(G,A).\]
This time, Theorem A gives $E_2^{pq} = (0)$ whenever $q > 0$, implying that the cohomology $\rmH^\ast_\m(G,A)$ is the isomorphic image of $\rmH^\ast_\m(G/H,A)$ under inflation.  We may therefore assume that $G$ is a Lie group.  Finally, Theorem A also gives that $\rmH_\m^\ast(G,A) = \rmH^\ast_\rm{cts}(G,A)$. In that case, by the results of Hochschild and Mostow~\cite{HocMos62}, $\rmH^\ast_\rm{cts}$ is given by Lie algebra cohomology, which is automatically finite dimensional.

So we know that $\B^p(G,A)$ is a topological vector space of finite codimension in $\Z^p(G,A)$ for each $p$. Since it is the image of a continuous surjection from the Polish group $\C^{p-1}(G,A)$, it is an analytic subset of $\Z^p(G,A)$. Now choose a finite set of vectors $\{e_i\}$ in $\Z^p(G,A)$ whose images form a basis in $\rmH_\m^p(G,A)$.  Then the map $f: (a_1,\ldots,a_n) \mapsto \sum_i a_i e_i$ from $\bbR^n$ into $\Z^p(G,A)$ is a surjective Borel map, hence a Borel isomorphism onto its image.

Therefore if $X$ is any Borel set in $\bbR^n$,  $f(X)$ is a Borel set in $\Z^p(G,A)$. Then the sum $\B^p(G,A) + f(X)$ is a continuous image of the analytic set $\B^p(G,A)\times f(X)$ and is therefore still analytic. Applying the same reasoning to $\bbR^n\setminus X$, one obtains that $\B^p(G,A) + f(X)$ is also co-analytic, and hence it is Borel.  This in turn says that the homomorphism from $\Z^p(G,A)$ to $\bbR^n$ that is a left-inverse to $f$ and has kernel $\B^p(G,A)$ is a Borel map from a Polish group to a Polish group and is hence continuous.  Therefore $\B^p(G,A)$ is closed and this map defines a homeomorphism from $\rmH_\m^p(G,A)$ to $\bbR^n$. \qed
\end{proof}

\noindent\emph{Remark}\quad Well-known examples show that the above may fail without the assumption that $A$ is finite-dimensional.  Indeed, any amenable group $G$ (connected, discrete or otherwise) admits an affine isometric action on a Hilbert space $\frH$ with no fixed points, but with approximate fixed points.  This affine isometric action may be specified in terms of a linear isometric action $\pi:G\actson \frH$, and a $1$-cocycle $b:G\to\frH$ for $\pi$ that specifies the translational part of the action; and having approximate but not exact fixed points corresponds to $b$ being an approximate coboundary but not an exact coboundary. See, for instance, de Cornulier, Tessera and Valette~\cite{deCTesVal07}. \fin

\begin{prop}\label{prop:top-disc-target}
If $G$ is almost connected and $A$ is discrete then each $\B^p(G,A)$ is closed in $\Z^p(G,A)$ and $\rmH_\m^p(G,A)$ is countable and discrete in its quotient topology.
\end{prop}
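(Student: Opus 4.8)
The plan is to reduce the whole statement to the single soft assertion that $\rmH_\m^p(G,A)$ is \emph{countable}, and then to recover both the closedness of the coboundaries and the discreteness of the quotient by a Baire-category argument rather than by tracking the quotient topology directly. Concretely, the coboundary subgroup $\B^p(G,A)$ is the image of the continuous homomorphism $d\colon\C^{p-1}(G,A)\to\C^p(G,A)$ out of a Polish group, so it is an analytic subgroup of the Polish group $\Z^p(G,A)$ and therefore has the Baire property. If its index $|\rmH_\m^p(G,A)|$ is countable, then countably many (analytic, hence Baire-property) cosets cover the Baire space $\Z^p(G,A)$, so some coset, and hence $\B^p(G,A)$ itself, is non-meager; by Pettis' theorem a non-meager subgroup with the Baire property is open. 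Thus $\B^p(G,A)$ is clopen, $\rmH_\m^p(G,A)$ is discrete, and being a separable quotient it is countable. So it suffices to prove countability, and every topological subtlety of the spectral sequence is then avoided.

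For countability I would follow the same structural reductions as in Proposition~\ref{prop:nice-top}, but using the Lyndon--Hochschild--Serre spectral sequence (Theorem 9 of~\cite{Moo76(gr-cohomIII)}) only for its algebraic content. First consider $G$ connected. Since $A$ is discrete, each orbit map $g\mapsto T^g a$ is a continuous map from a connected space into a discrete one, hence constant, so the $G$-action on $A$ is trivial. By the Gleason--Montgomery--Zippin Theorem~\cite{MonZip55} write $G$ as an extension of a compact normal subgroup $H$ by a connected Lie group $G/H$. As $H$ is compact and $A$ discrete, Corollary~\ref{cor:disc-cohom-grps} shows the modules $\rmH_\m^q(H,A)$ are discrete (hence Hausdorff), so the spectral sequence
\[E_2^{pq}\cong\rmH_\m^p(G/H,\rmH_\m^q(H,A))\Longrightarrow\rmH_\m^{p+q}(G,A)\]
exists. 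Each coefficient group here is discrete, and $G/H$ is a finite-dimensional connected Lie group, so Wigner's comparison for finite-dimensional acting groups and discrete targets~\cite{Wig73} identifies $E_2^{pq}$ with the classifying-space cohomology $\rmH^p(B_{G/H},\rmH_\m^q(H,A))$. Because $G/H$ is homotopy equivalent to its maximal compact subgroup, this is the cohomology of a space with finitely generated integral homology, and so is countable. Since only finitely many $E_2^{pq}$ contribute to each total degree and later pages are subquotients, the abutment $\rmH_\m^n(G,A)$ is countable; the argument of the first paragraph then gives the full conclusion for connected $G$, in particular that these groups are Hausdorff.

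With the connected case established I would pass to almost connected $G$ exactly as in Proposition~\ref{prop:nice-top}. Writing $G_0\unlhd G$ for the identity component, the now-known Hausdorffness of $\rmH_\m^q(G_0,A)$ licenses the spectral sequence
\[E_2^{pq}\cong\rmH_\m^p(G/G_0,\rmH_\m^q(G_0,A))\Longrightarrow\rmH_\m^{p+q}(G,A).\]
Here $G/G_0$ is compact and each coefficient module $\rmH_\m^q(G_0,A)$ is discrete, so Corollary~\ref{cor:disc-cohom-grps} makes every $E_2^{pq}$ countable. As before the abutment is a finite iterated extension of countable groups, hence countable, and the Pettis argument of the first paragraph completes the proof.

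The essential difficulty here is not computational: it is to find a mechanism that converts algebraic finiteness into the \emph{topological} statement that $\B^p(G,A)$ is closed. The naive route would track the quotient topology through the spectral sequence and try to show that its abutment filtration consists of closed subgroups, which is genuinely delicate. The Pettis-theorem observation sidesteps this entirely, so that the spectral sequence is only ever used algebraically; the one indispensable external input is the finite generation of the cohomology of the classifying space of a compact Lie group, which is exactly what forces countability and which has no analogue for general discrete $G$, consistent with the failure of such statements illustrated in Section~\ref{sec:loccpt}.
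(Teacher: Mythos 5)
Your proof is correct, but it reaches the result by a genuinely different route at the two key points, so a comparison is worthwhile. For the descriptive-set-theoretic step, the paper makes the same reduction to countability, but converts countability into openness of $\B^p(G,A)$ via the Lusin separation theorem: every union of cosets of the analytic subgroup $\B^p(G,A)$ is both analytic and co-analytic, hence Borel, so the quotient homomorphism is a Borel map onto $\Z^p(G,A)/\B^p(G,A)$ with its \emph{discrete} topology and is therefore continuous, forcing $\B^p(G,A)$ to be clopen. Your Baire-category/Pettis argument is an interchangeable and arguably more economical mechanism for exactly the same step. The more substantive divergence is in the connected case: the paper does not run the LHS spectral sequence over the Gleason--Montgomery--Zippin extension there. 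Instead it compares $G$ directly with its maximal compact subgroup $K$, using that $G$ is homeomorphic to $K\times V$ with $V$ Euclidean, so that the $E_1$-tableau $\rmH^q_\rm{sheaf}(G^p,\cal{A})$ of the defining spectral sequence for $\rmH^\ast_\rm{ss}$ agrees with that of $K$ by homotopy invariance of locally constant sheaf cohomology; Proposition~\ref{prop:ThmE-connected} transports the resulting isomorphism $\rmH^\ast_\rm{ss}(G,A)\cong\rmH^\ast_\rm{ss}(K,A)$ to $\rmH_\m^\ast$, and countability is then inherited wholesale from the compact case via Corollary~\ref{cor:disc-cohom-grps}. You instead prove countability entry-by-entry on the $E_2$-page of the LHS sequence for $H\to G\to G/H$, importing Wigner's identification of $\rmH_\m^p(G/H,\cdot)$ with classifying-space cohomology~\cite{Wig73} together with the finite generation of $\rmH_\ast(B_{K'};\bbZ)$ for $K'$ a maximal compact subgroup of the Lie group $G/H$. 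Both routes ultimately rest on Wigner's finite-dimensional comparison (the paper's inside Proposition~\ref{prop:ThmE-connected}); the paper's buys the stronger intermediate statement $\rmH_\m^\ast(G,A)\cong\rmH_\m^\ast(K,A)$, while yours avoids the homotopy manipulation of the $\rmH^\ast_\rm{ss}$ tableaux at the cost of a universal-coefficients computation. One point you should make explicit: the coefficients in $\rmH^p(B_{G/H},\rmH_\m^q(H,A))$ are a priori a local system, and your universal-coefficients countability argument needs them constant; this is automatic because $B_{G/H}$ is simply connected ($G/H$ being connected), or equivalently because the continuous action of the connected group $G/H$ on a countable discrete group is trivial. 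Your almost-connected step coincides with the paper's, except that you invoke Corollary~\ref{cor:disc-cohom-grps} for the compact group $G/G_0$ where the paper instead counts continuous cochains on the totally disconnected quotient; both are valid.
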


\begin{proof}
First assume that $G$ is connected and let $K$ be the maximal compact subgroup of $G$. Then as topological spaces one has $G = K \times V$ with $V$ a finite-dimensional real vector space. For the theory $\rmH^\ast_\rm{ss}$ the defining spectral sequence has first tableau
\[E_1^{pq} := \rmH^q_\rm{sheaf}(G^p, \cal{A}),\]
where $\cal{A}$ is the locally constant sheaf on $G^p$ corresponding to $A$. But topologically $G^p = K^p \times V^p$ and this is homotopy equivalent to $K^p$, so the sheaf cohomology on $G^p$ is the same as on $K^p$ by the homotopy principle for locally constant sheaves (see Chapter 5 of Schapira~\cite{Sch07}). Therefore the $E_1$-terms of this spectral sequence for $\rmH^\ast_\rm{ss}(G,A)$ coincide with those for $\rmH^\ast_\rm{ss}(K,A)$, and so the restriction map from $G$ to $K$ induces an isomorphism in cohomology for $\rmH^\ast_\rm{ss}$. By Proposition~\ref{prop:ThmE-connected} this holds also for $\rmH_\m^\ast$, and so Corollary~\ref{cor:disc-cohom-grps} implies that the groups $\rmH^p_\m(G,A)$ are countable.

We now use the following general result: if $B$ is any Polish group, $C$ is a normal subgroup which is an analytic subset, and $B/C$ is countable, then $C$ is open (and hence closed) in $B$ and the quotient group $B/C$ has the discrete topology. To see this, observe that by translation any coset of $C$ in $B$ is also analytic, and hence by countability any union of cosets of $C$ is analytic. Hence any union of cosets is both analytic and co-analytic, and then by the separation theorem any union of cosets is a Borel set. Therefore the homomorphism from $B$ to $B/C$ is a Borel homomorphism, and hence continuous, when the latter is given its \emph{discrete} topology. Therefore $C$ is open and also closed and the quotient topology on $B/C$ is itself discrete. Applying this with $B = \Z^p(G,A)$ and $C = \B^p(G,A)$, where the latter is analytic as a continuous image of $\C^{p-1}(G,A)$, it follows that $\rmH^p_\m$ is discrete in its quotient topology.

Now let $G$ be almost connected and let $G_0$ be its identity component. Since the groups $\rmH_\m^p(G_0,A)$ are discrete the LHS spectral sequence is available to give
\[E_2^{pq} = \rmH_\m^p(G/G_0, \rmH_\m^q(G_0, A)) \Longrightarrow \rmH_\m^{p+q}(G,A);\]
and since $G/G_0$ is totally disconnected we can can use $\rmH^\ast_\rm{cts}(G/G_0,\rmH_\m^\ast(G_0, A))$ in place of $\rmH_m^\ast(G/G_0,\rmH_\m^q(G_0,A))$.  Since $G/G_0$ is compact the group of cochains $\C^p_\rm{cts}(G/G_0,B)$ is countable for discrete countable $B$, and so the cohomology groups $\rmH^\ast_\rm{cts}(G/G_0,\rmH_\m^\ast(G_0, A))$ are also countable. Since in this case all groups in $E_2^{\ast\ast}$ are countable, the same follows for $\rmH_\m^\ast(G,A)$, and as before this implies a discrete quotient topology. \qed
\end{proof}

\vspace{7pt}

Proposition~\ref{prop:top-disc-target} illustrates the value of the comparison theorems between $\rmH_\m^\ast$ and $\rmH^\ast_\rm{ss}$: the fact that $\rmH_m^\ast(G,A)$ agrees with $\rmH_\m^\ast(K,A)$ above is transparent from the spectral sequence that calculates $\rmH^\ast_\rm{ss}$, but seems to be much harder to prove purely in terms of measurable cochains.

\begin{ques}\label{ques:elem-compar-contractible}
Is there a simple direct proof that $\rmH_\m^p(G,A) = (0)$ whenever $G$ is contractible (equivalently, is a Lie group homeomorphic to a Euclidean space), $A$ is discrete and $p \geq 1$?
\end{ques}

It is not clear how essential is the assumption of almost connectedness in Proposition~\ref{prop:nice-top}.  Even among discrete groups $\G$, for which measurable cochains simply give classical group cohomology, the following natural question seems to be open.

\begin{ques}\label{ques:topologizing-cohom-for-disc-G}
Is there a countable discrete group $\G$ for which $\B^p(\G,\bbZ)$ is not closed in $\Z^p(\G,\bbZ)$ for the topology of pointwise convergence?  Can $\G$ be finitely-generated, or even finitely-presented?  (It is easy to see that this would need $p\geq 2$.)
\end{ques}

More generally, the structure of non-compact totally disconnected groups remains quite mysterious, as do their cohomological properties: see, for instance, Willis~\cite{Wil94} for a thorough discussion.

\begin{prop}\label{prop:almost-ctd-toral}
If $G$ is almost connected and $A$ is toral then each $\B^p(G,A)$ is closed in $\Z^p(G,A)$ and $\rmH_\m^p(G,A)$ is of the form
\[\rm{discrete}\oplus\rm{Euclidean}\]
as a topological group.
\end{prop}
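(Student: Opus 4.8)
My plan is to read off the structure of $\rmH_\m^p(G,\bbT^d)$ directly from the coefficient sequence $\bbZ^d\into\bbR^d\onto\bbT^d$, feeding in the two preceding propositions. For any almost connected $G$ this is a short exact sequence in $\sfP(G)$ (the action, which factors through the compact totally disconnected group $G/G_0$, preserves each term), so $\rmH_\m^\ast$ supplies a long exact sequence
\[\cdots\to\rmH_\m^p(G,\bbZ^d)\stackrel{\alpha}{\to}\rmH_\m^p(G,\bbR^d)\stackrel{\beta}{\to}\rmH_\m^p(G,\bbT^d)\stackrel{\delta}{\to}\rmH_\m^{p+1}(G,\bbZ^d)\to\cdots.\]
Proposition~\ref{prop:nice-top} tells me that $V:=\rmH_\m^p(G,\bbR^d)$ is a Euclidean space with $\B^p(G,\bbR^d)$ closed, and Proposition~\ref{prop:top-disc-target} tells me that $\rmH_\m^p(G,\bbZ^d)$ is countable and discrete with $\B^p(G,\bbZ^d)$ closed. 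Writing $L:=\img\alpha$, the sequence presents $\rmH_\m^p(G,\bbT^d)$ as an extension
\[0\to V/L\to\rmH_\m^p(G,\bbT^d)\to\ker\delta\to 0,\]
where $\ker\delta$ is a subgroup of the countable discrete group $\rmH_\m^{p+1}(G,\bbZ^d)$, hence itself countable and discrete.

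Next I would show that $\B^p(G,\bbT^d)$ is closed in $\Z^p(G,\bbT^d)$, so that the quotient topology is Hausdorff and the extension above becomes one of topological groups. This I would do by the descriptive-set-theoretic device already used for Propositions~\ref{prop:nice-top} and~\ref{prop:top-disc-target}: $\B^p(G,\bbT^d)$ is the continuous image of the Polish group $\C^{p-1}(G,\bbT^d)$ and is therefore analytic in $\Z^p(G,\bbT^d)$. Once the quotient is known to be countable-by-Euclidean, every union of cosets of $\B^p(G,\bbT^d)$ is both analytic and co-analytic, hence Borel by the separation theorem; this makes the quotient homomorphism Borel, hence continuous, and so forces $\B^p(G,\bbT^d)$ to be closed. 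The induced maps $\alpha,\beta,\delta$ are continuous, so after Hausdorffness is established the displayed extension is genuinely an extension of topological groups.

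It then remains to identify the topological group $\rmH_\m^p(G,\bbT^d)$ as $\rm{discrete}\oplus\rm{Euclidean}$. The plan is to prove that $V/L$ is \emph{Euclidean}; granting this, $V/L$ is divisible, so $\rm{Ext}^1(\ker\delta,V/L)=0$ and the extension splits algebraically, and because $\ker\delta$ is discrete the splitting section is automatically continuous. This yields $\rmH_\m^p(G,\bbT^d)\cong(V/L)\oplus\ker\delta$ as topological groups, which is of the required form. To see that $V/L$ is Euclidean it suffices to prove that $L=\img\alpha$ is a closed \emph{vector subspace} of the finite-dimensional space $V$, with no lattice directions.

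Establishing that $L$ is a vector subspace is exactly the point at which the sharp form $\rm{discrete}\oplus\rm{Euclidean}$ (rather than a decomposition also carrying a compact torus) must be earned, and I expect it to be the main obstacle. The plan is to reduce, via the Gleason--Montgomery--Zippin presentation~(\ref{eq:H5}) and the spectral-sequence reductions already deployed in this section --- stripping a compact normal subgroup from a connected $G$ and then descending from $G$ to $G/G_0$, with Corollary~\ref{cor:disc-cohom-grps} and Theorem~A guaranteeing that the auxiliary coefficient groups are discrete or Euclidean --- to the case of a connected Lie group. There the $\bbT^d$-action is trivial, $\alpha$ is the van Est comparison $\rmH_\m^\ast(G,\bbZ^d)=\rmH^\ast(B_G,\bbZ^d)\to\rmH^\ast_\rm{cts}(G,\bbR^d)$, and $\rmH^\ast_\rm{cts}(G,\bbR^d)$ is computed by relative Lie algebra cohomology. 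The delicate task is to show that the cokernel $V/L$ is torsion-free as a topological group, so that its preimage description forces the closed subgroup $\overline L\cong\bbR^a\times\bbZ^b$ of $V$ to have $b=0$ and hence $L=\overline L$ to be a vector subspace; the same saturation statement is precisely what one needs to guarantee, in the reductions, that no differential carries a discrete entry onto a lattice inside a Euclidean entry and thereby manufactures a torus. With this saturation in hand the decomposition $\rm{discrete}\oplus\rm{Euclidean}$ follows in the connected Lie case, and the general almost connected statement then follows from the reductions above.
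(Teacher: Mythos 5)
Your skeleton --- the coefficient sequence $\bbZ^k\into\bbR^k\onto\bbT^k$, Propositions~\ref{prop:nice-top} and~\ref{prop:top-disc-target} for the flanking terms, a descriptive-set-theoretic closedness argument, then a splitting --- is the same as the paper's, but at the two load-bearing points your proposal has genuine gaps. The decisive one is the step you yourself flag as ``the main obstacle'': you never prove that $L=\img\,\alpha$ is a vector subspace of $V$; you only sketch a programme (Gleason--Montgomery--Zippin reduction, van Est, torsion-freeness of $V/L$) with no actual argument for the saturation statement. Note moreover that since $\rmH_\m^p(G,\bbZ^k)$ is \emph{countable} by Proposition~\ref{prop:top-disc-target}, $L$ is a countable subgroup of the Euclidean space $V$, so ``$L$ is a vector subspace'' is equivalent to $L=(0)$: your plan, if it can be completed at all, must show that the change-of-coefficients map $\rmH_\m^p(G,\bbZ^k)\to\rmH_\m^p(G,\bbR^k)$ vanishes identically in positive degrees. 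That is exactly what the paper proves (for connected $G$, which is where it is needed): using Proposition~\ref{prop:ThmE-connected} it replaces $\rmH_\m^\ast$ by $\rmH^\ast_{\rm{ss}}$ and compares the two defining spectral sequences, where softness of the sheaf $\cal{R}^k$ kills all entries with $q>0$ on the $\bbR^k$ side, while connectedness of $G$ forces continuous $\bbZ^k$-valued cochains to be constant, so the $q=0$ row contributes $\rmH^p_{\rm{cts}}(G,\bbZ^k)=(0)$ for $p>0$. Without some such input your outline is empty at its central point; your instinct that excluding lattice directions is where the sharp form $\rm{discrete}\oplus\rm{Euclidean}$ must be earned is correct, but the proposal does not earn it.

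Second, your Hausdorffness argument fails as stated. The separation trick of Propositions~\ref{prop:nice-top} and~\ref{prop:top-disc-target} requires the quotient to be \emph{countable}, so that an arbitrary union of cosets is a countable union of analytic sets and hence analytic; here $\Z^p(G,\bbT^k)/\B^p(G,\bbT^k)$ is countable-by-Euclidean, hence uncountable, and an arbitrary union of cosets of $\B^p(G,\bbT^k)$ need not be analytic, so ``analytic and co-analytic, hence Borel'' does not apply. The paper repairs precisely this with a two-step argument: it forms the analytic subgroup $W$ of $\Z^p(G,\bbT^k)$ generated by the image of $\Z^p(G,\bbR^k)$ under reduction mod $\bbZ^k$ together with $d\,\C^{p-1}(G,\bbT^k)$; by the long exact sequence and countability of $\rmH_\m^{p+1}(G,\bbZ^k)$ this $W$ has countable index, so the countable-coset argument makes $W$ closed, and then $\B^p(G,\bbT^k)$ has \emph{finite} codimension inside $W$ (spanned by the image of a Borel cross-section $D\cong\bbR^d$ of $\B^p(G,\bbR^k)$ in $\Z^p(G,\bbR^k)$), so the finite-codimension argument of Proposition~\ref{prop:nice-top} closes it up and gives $W/\B^p(G,\bbT^k)\cong\bbR^d$ directly --- which also yields the topological splitting without your $\rm{Ext}^1$/divisibility step. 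Finally, the paper proves the connected case first and deduces the almost connected case from the LHS spectral sequence over $G/G_0$, using Theorem~A to collapse the coefficient modules $D_q\oplus\bbR^{d_q}$ to their discrete parts in positive degrees, rather than running the long exact sequence over an almost connected $G$ directly.
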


\begin{proof}
First assume that $G$ is connected. Suppose $A = \bbT^k$ and consider the short exact sequence $\bbZ^k\into \bbR^k\onto \bbT^k$. In the resulting long exact sequence we have in part
\begin{multline*}
\cdots\to \rmH_\m^p(G,\bbZ^k) \to \rmH_\m^p(G,\bbR^k) \to \rmH_\m^p(G,A)\\ \stackrel{\rm{switchback}}{\to} \rmH_\m^{p+1}(G,\bbZ^k) \to\cdots.
\end{multline*}

In this case we claim that for $p>0$ the inclusion $\rmH_\m^p(G,\bbZ^k)\to \rmH_\m^p(G,\bbR^k)$ is the zero map. This will imply that the $\bbR^k$-valued cohomology in positive degrees is injected into the $A$-valued cohomology, and Proposition~\ref{prop:top-disc-target} shows that its cokernel is countable. This gives an algebraic isomorphism of the kind we assert, and we can then show that it is also an isomorphism of topological groups.

To see that the above is the zero map, by Proposition~\ref{prop:ThmE-connected} we can replace $\rmH_\m^\ast$ by $\rmH^\ast_{\rm{ss}}$ and look at the spectral sequences for $\rmH^\ast_{\rm{ss}}(G,\bbZ^k)$ and for $\rmH^\ast_{\rm{ss}}(G,\bbR^k)$. The $E_1^{pq}$ terms are $\rmH_\rm{sheaf}^q(G^p,\cal{Z}^k)$ and $\rmH_\rm{sheaf}^q(G^p,\cal{R}^k)$ where $\cal{Z}^k$ and $\cal{R}^k$ are the corresponding sheaves of germs of continuous sections. The sheaf $\cal{R}^k$ is soft so that in the second spectral sequence all terms for $q > 0$ vanish. Therefore in the map from the first spectral sequence to the second, all terms with $q>0$ are mapped to zero. The $q=0$ term is simply the continuous cohomology, but $\rmH_{\rm{cts}}^p(G,\bbZ^k) = 0$ for $p > 0$ because $G$ is connected. This shows that the map 
$\rmH_\m^p(G,\bbZ^k) \to \rmH_\m^p(G,\bbR^k)$ is the zero map for $p>0$.

The proof of topological isomorphism is similar to the preceding propositions. We notice that $\Z^p(G,\bbR^k)$ and $\C^{p-1}(G,A)$ are both continuously mapped into $\Z^p(G,A)$, by the quotient map $q:\Z^p(G,\bbR^k)\to \Z^p(G,A)$ and coboundary operator $d$ respectively. Their sum $W$ is an analytic set as the continuous image of a Borel set, and is a subgroup of countable index, and so just as in the proof of Proposition~\ref{prop:top-disc-target} it must be closed.

By Proposition~\ref{prop:nice-top}, $\B^p(G,\bbR^k)$ is of finite codimension $d$ in $\Z^p(G,\bbR^k)$, and as in the proof of that proposition we can form a Borel cross-section $D$ isomorphic to $\bbR^d$. Then $q(D) + \B^p(G,A) = W$, so $\B^p(G,A)$ is of finite codimension in $W$ and is therefore closed in $W$, again as in the proof of Proposition~\ref{prop:nice-top}. This gives $W/\B^p(G,A) \cong \bbR^d$, and hence that $\rmH_\m^p(G,A)$ is of the form
$(\rm{discrete})\oplus \bbR^d$ in its quotient topology.

Now suppose that $G$ is almost connected and let $G_0$ be its identity component.  By the first part of the proposition we may apply the LHS spectral sequence to obtain
\[E_2^{pq} \cong \rmH_\m^p(G/G_0,\rmH_\m^q(G_0,A))\Longrightarrow \rmH_\m^{p+q}(G,A).\]

We have seen that each $\rmH_\m^q(G_0,A)$ is of the form $D_q\oplus \bbR^{d_q}$ with $D_q$ discrete.  The presentation $\bbR^{d_q}\into \rmH_\m^q(G_0,A)\onto D_q$ is respected by any continuous automorphism of the topological group $\rmH_\m^q(G_0,A)$ because $\bbR^{d_q}$ is mapped to the identity component.  Therefore, by Theorem A, the long exact sequence corresponding to this presentation collapses to show that the quotient maps
\[\rmH_\m^p(G/G_0,\rmH_\m^q(G_0,A))\to \rmH_\m^p(G/G_0,D_q)\]
are all isomorphisms for $p > 0$, and so by Corollary~\ref{cor:disc-cohom-grps} these groups are countable and discrete for $p > 0$.  An easy induction on tableaux now shows that each group $E_i^{pq}$ is still of the form
\[\rm{discrete}\oplus \rm{Euclidean}\oplus \rm{torus},\]
since each nonzero homomorphism that appears in any of the higher tableaux has at least its target group discrete, and so the same is true of $\rmH_\m^{p+q}(G,A)$, as required. \qed
\end{proof}

\begin{cor}
If $G$ is almost connected, $A$ is locally compact and locally connected and the $G$-action on $A$ is trivial then each $\rmH_\m^p(G,A)$ is of the form
\[\rm{discrete} \oplus \rm{Euclidean}\]
as a topological group.
\end{cor}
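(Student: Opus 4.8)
The plan is to reduce the statement to the three target types already settled---Euclidean, toral and discrete---by realizing $A$ as a finite direct product of modules of those types, and then to use that $\rmH_\m^\ast$ carries finite products of modules to direct sums of topological groups.

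First I would unpack the structure of $A$. Since $A$ is locally connected its identity component $A_0$ is open, so $D := A/A_0$ is discrete, and it is countable because $A$ is Polish. The subgroup $A_0$ is an open subgroup of $A$, hence itself connected, locally compact and locally contractible; by the Gleason--Montgomery--Zippin--Yamabe theory already invoked above, such a group has no small subgroups and is therefore a Lie group, and a connected Abelian Lie group is isomorphic to $\bbR^n\times\bbT^d$ as a topological group, with $n,d$ finite. Next I would split off the discrete quotient: the group $A_0=\bbR^n\times\bbT^d$ is divisible, hence injective as an abstract Abelian group, so $\rm{Ext}^1_{\bbZ}(D,A_0)=(0)$ and the extension $A_0\into A\onto D$ admits an algebraic splitting $s\colon D\to A$. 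As $D$ is discrete, $s$ is automatically continuous, so the induced algebraic isomorphism $A\cong A_0\times D$ is in fact a homeomorphism, and therefore $A\cong\bbR^n\times\bbT^d\times D$ as topological $G$-modules (the $G$-action being trivial throughout).

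Finally I would pass to cohomology. For a finite product of modules one has $\C^p(G,M_1\times M_2)\cong\C^p(G,M_1)\times\C^p(G,M_2)$ as Polish groups with $d$ acting coordinatewise, so $\Z^p$ and $\B^p$ split as products and $\rmH_\m^p(G,M_1\times M_2)\cong\rmH_\m^p(G,M_1)\oplus\rmH_\m^p(G,M_2)$ as topological groups. Applying this twice gives
\[\rmH_\m^p(G,A)\cong\rmH_\m^p(G,\bbR^n)\oplus\rmH_\m^p(G,\bbT^d)\oplus\rmH_\m^p(G,D).\]
By Proposition~\ref{prop:nice-top} the first summand is Euclidean, by Proposition~\ref{prop:almost-ctd-toral} the second is of the form $\rm{discrete}\oplus\rm{Euclidean}$, and by Proposition~\ref{prop:top-disc-target} the third is discrete and countable; a finite direct sum of groups of the form $\rm{discrete}\oplus\rm{Euclidean}$ is again of that form, which is the assertion (in positive degrees; for $p=0$ the sum merely recovers $A^G=A$).

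The cohomological content is thus carried entirely by the earlier propositions, and the one genuinely delicate point is structural: the passage from the local-regularity hypothesis to the finiteness of $d$. This is exactly where local contractibility (rather than mere local connectedness) is the operative assumption, and I expect it to be the main obstacle to state correctly. The infinite-dimensional torus $\bbT^\bbN$ is compact and locally connected but not locally contractible, and for it $\rmH_\m^p(G,\bbT^\bbN)\cong\prod_\bbN\rmH_\m^p(G,\bbT)$ is an infinite product of nontrivial discrete groups and so fails to be of the form $\rm{discrete}\oplus\rm{Euclidean}$; hence the hypothesis must be strong enough to exclude it, which is precisely what forces the compact part of $A_0$ to be a finite-dimensional torus.
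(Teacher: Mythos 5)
Your proposal is correct and takes essentially the same route as the paper: decompose $A$ (with its trivial $G$-action) into a direct sum of Euclidean, toral and discrete pieces, observe that cochains, cocycles and coboundaries split accordingly, and apply Propositions~\ref{prop:nice-top}, \ref{prop:almost-ctd-toral} and~\ref{prop:top-disc-target} to the summands. The only difference is that the paper obtains the decomposition by citing the Principal Structure Theorem for locally compact Abelian groups, whereas you derive it by hand (open identity component, Gleason--Montgomery--Zippin for $A_0$, divisibility of $\bbR^n\times\bbT^d$ to split off the discrete quotient); both are sound. Your closing remark is also well taken and worth keeping: the decomposition into $\rm{discrete}\oplus\rm{Euclidean}\oplus(\hbox{finite-dimensional torus})$ genuinely requires local contractibility, as in the statement of Theorem D, rather than the local connectedness written in the corollary. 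Indeed $\bbT^\bbN$ is compact, connected and locally connected but not locally contractible, and already $\rmH_\m^1(\bbT,\bbT^\bbN)\cong\bbZ^\bbN$ with its product topology, which is not of the form $\rm{discrete}\oplus\rm{Euclidean}$; so the hypothesis as printed is a slip, and your reading (and proof) under local contractibility is the correct one.
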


\begin{proof}
If the $G$-action on $A$ is trivial then, by the Principal Structure Theorem for locally compact Abelian groups (see, for instance, Section 2.4 in Rudin~\cite{Rud62}), any such $A$ decomposes as a $G$-module into a direct sum
\[\rm{discrete}\oplus \rm{Euclidean}\oplus \rm{torus},\]
and so we may simply apply Proposition~\ref{prop:nice-top} to the resulting components of the cocycles. \qed
\end{proof}

The assumption of trivial $G$-action is important in the preceding corollary. Without it, one can construct a locally compact and locally connected module $A$ such that the overall $G$-action does not respect any direct sum structure on $A$, and use this to obtain a non-Hausdorff quotient topology on cohomology.  For example, let $\a,\b \in \bbR$ be irrational and rationally independent, and let $G := \bbR$ act on the group $A := \bbR\times \bbZ^2$ according to
\[T^t(s,m,n) := (s + t\a m + t \b n,m,n).\]
Also let $D := \bbZ\a + \bbZ\b$, so this is a countable dense subgroup of $\bbR$.

\begin{lem}
For this $\bbR$-module $A$ the group $\rmH_\m^1(\bbR,A)$ is topologically isomorphic to the group $\bbR/D$; in particular, it is not Hausdorff.
\end{lem}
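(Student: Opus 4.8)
The plan is to work directly with $1$-cocycles, where the automatic continuity of crossed homomorphisms gives complete control, and then to read off the quotient topology by hand.

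First I would exploit the submodule structure of $A$. The subgroup $\bbR\times\{0\}\times\{0\}$ is a $G$-submodule carrying the trivial action, and the quotient $A/(\bbR\times\{0\}\times\{0\})\cong\bbZ^2$ also carries the trivial action. Let $b\in\Z^1(\bbR,A)$ be any measurable $1$-cocycle; by the automatic continuity of crossed homomorphisms we may take $b$ continuous, and write $b=(b_0,b_1,b_2)$ for its three coordinate functions. Composing $b$ with the projection onto the trivial module $\bbZ^2$ yields a continuous homomorphism $\bbR\to\bbZ^2$, which must vanish since $\bbR$ is connected and $\bbZ^2$ is discrete; hence $b_1=b_2=0$ and $b$ takes values in the trivially-acted submodule $\bbR\times\{0\}\times\{0\}$. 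On that submodule the cocycle equation reduces to additivity of $b_0$, so $b_0(t)=ct$ for some $c\in\bbR$. Thus
\[\Z^1(\bbR,A)=\{\,t\mapsto(ct,0,0):\ c\in\bbR\,\},\]
and $c\mapsto[\,t\mapsto(ct,0,0)\,]$ is a group isomorphism $\bbR\to\Z^1(\bbR,A)$ (distinct $c$ give cocycles differing off the null set $\{0\}$).

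Second I would compute the coboundaries. For $a=(s_0,m,n)\in A=\C^0(\bbR,A)$ one has
\[da(t)=T^ta-a=\big((\a m+\b n)t,0,0\big),\]
which is the cocycle with parameter $c=\a m+\b n$. As $(m,n)$ ranges over $\bbZ^2$ this value ranges over exactly $D=\bbZ\a+\bbZ\b$, so under the isomorphism above $\B^1(\bbR,A)$ corresponds precisely to the subgroup $D\leq\bbR$. Consequently $\rmH_\m^1(\bbR,A)\cong\bbR/D$ algebraically. (The same conclusion also follows from the long exact sequence of $(0)\to\bbR\to A\to\bbZ^2\to(0)$ together with Theorem~A, whose only nonzero connecting map is $\bbZ^2\to\rmH_\m^1(\bbR,\bbR)\cong\bbR$, $(m,n)\mapsto\a m+\b n$.)

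Third I would pin down the topology. The parametrization $\bbR\to\Z^1(\bbR,A)$ is continuous for the topology of convergence in measure, since $ct\to c_0t$ uniformly on compact subsets of $\bbR$ as $c\to c_0$. Moreover $\Z^1(\bbR,A)=\ker(d)$ is closed in the Polish group $\C^1(\bbR,A)$ and hence is itself Polish, so this continuous bijective homomorphism between Polish groups is a homeomorphism by the open mapping theorem for Polish groups~\cite{Ban55}. It carries $\B^1(\bbR,A)$ onto $D$, whence $\rmH_\m^1(\bbR,A)$ is topologically isomorphic to $\bbR/D$ equipped with its quotient topology. Finally, because $D$ is dense in $\bbR$, any nonempty $D$-invariant open subset $V\subseteq\bbR$ contains some open interval, whose translates by the elements of the dense subgroup $D$ cover $\bbR$; hence $V=\bbR$, the quotient topology on $\bbR/D$ is indiscrete, and in particular it is non-Hausdorff (equivalently, $\B^1(\bbR,A)\cong D$ fails to be closed in $\Z^1(\bbR,A)\cong\bbR$). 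The main obstacle is the topological identification rather than the algebra: the content lies in verifying that the convergence-in-measure topology restricted to $\Z^1(\bbR,A)$ agrees with the Euclidean topology on $\bbR$ under the parametrization, and it is precisely here that the open mapping theorem for Polish groups does the essential work; everything else is a direct computation with $1$-cocycles and coboundaries.
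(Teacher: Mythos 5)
Your proof is correct and follows essentially the same route as the paper's: identify $\Z^1(\bbR,A)$ with the linear maps $\bbR\to A_0\cong\bbR$ using automatic continuity of crossed homomorphisms (the $\bbZ^2$-components vanishing by connectedness), and then compute that the coboundaries correspond exactly to the subgroup $D$. The only difference is that you spell out the topological identification carefully via the open mapping theorem for Polish groups, a step the paper states without proof.
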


\begin{proof}
As usual $\Z^1(G,A)$ is the group of crossed homomorphisms $\bbR\to A$, and this is easily identified with the group of linear maps $\bbR \to A_0 = \bbR$, since any crossed homomorphism $\bbR\to A$ is continuous and hence takes values in $A_0$. Hence $\Z^1(G,A)$ is topologically isomorphic to $\bbR$.  On the other hand, $\B^1(G,A)$ consists of those crossed homomorphisms of the form
\[\a:t\mapsto T^t(s_0,m_0,n_0) - (s_0,m_0,n_0) = t(\a m_0 + \b n_0)\]
for some $(s_0,m_0,n_0) \in A$, and so this is identified with $D$. \qed
\end{proof}

\section{Completion of the comparison theorem}\label{sec:compar-complete}

\noindent\emph{Proof of Theorem E}\quad As remarked previously, in case $A$ is Fr\'echet this follows at once from Theorem A and Wigner's Theorem 3 in~\cite{Wig73}, so we focus on the case of locally compact and locally contractible $A$.  Such a $A$ lies in Segal's category of modules, giving directly that $\rmH^\ast_\rm{ss}(G,A)\cong \rmH^\ast_\rm{Seg}(G,A)$.

Suppose first that $A$ is discrete. Wigner's Theorem 4 in~\cite{Wig73} includes the construction of $\psi:\rmH^\ast_\rm{ss}(G,A)\to \rmH^\ast_\rm{cs}(G,A)$ and the proof that it is an isomorphism; he imposes the condition that $G$ be finite-dimensional, but in fact he uses this only for his earlier proof of the agreement with $\rmH_\m^\ast(G,A)$. This isomorphism can also be obtained by composing the agreement of $\rmH^\ast_\rm{ss}$ and $\rmH^\ast_\rm{Seg}$ discussed in the Introduction with Segal's isomorphism $\rmH^\ast_\rm{Seg}\to \rmH^\ast_\rm{cs}$ from~\cite{Seg70}.

For $\a:\rmH^\ast_\rm{ss}(G,A)\to \rmH^\ast_\m(G,A)$, Proposition~\ref{prop:ThmE-connected} already shows that it is an isomorphism if $G$ is connected.  In general let $G_0 \unlhd G$ be the identity component.  By Proposition~\ref{prop:top-disc-target} the groups $\rmH_\m^\ast(G_0,A)$ are discrete in their quotient topologies, and hence the LHS spectral sequence of~\cite{Moo76(gr-cohomIV)} is available to give the abutment
\[E^{pq}_2 \cong \rmH_\m^p(G/G_0,\rmH_\m^q(G_0,A))\Longrightarrow \rmH_\m^{p+q}(G,A).\]

On the other hand, since $G/G_0$ is totally disconnected, by Michael's Theorem the inclusion $G_0 \leq G$ has a local (in fact global) cross-section, and so the theory $\rmH^\ast_\rm{ss}$ has an analogous spectral sequence.  Since 
\[\rmH_\m^\ast(G_0,A) \cong \rmH^\ast_\rm{ss}(G_0,A)\]
by Proposition~\ref{prop:ThmE-connected}, and
\[\rmH_\m^\ast(G/G_0,B) \cong \rmH^\ast_\rm{cts}(G/G_0,B) \cong \rmH^\ast_\rm{ss}(G/G_0,B)\]
for any discrete module $B$ by the comparison result for totally disconnected acting groups (Theorem 1 in~\cite{Wig73}), these two spectral sequences show that $\a$ is an isomorphism.

Finally, consider an arbitrary locally compact and locally contractible $A$.  By the structure theorem, $A$ must be of the form
\[\rm{discrete} \oplus \rm{compact}\oplus \rm{Euclidean},\]
and this can be locally contractible only if the compact part is a finite extension of a torus. Letting $A_0$ be the identity component of $A$, this implies that $A_0$ is a connected Abelian Lie group and that $A/A_0$ is discrete.

It therefore suffices to prove that $\a$ is an isomorphism for the three cases of discrete, toral and Euclidean $A$ separately, since then we can patch them together via the long exact sequence of cohomology using the Five Lemma.  Moreover, since tori are quotients of Euclidean spaces by discrete subgroups (and their automorphisms all lift to the covering Euclidean spaces), it actually suffices to treat the first two cases.  Euclidean $A$ are handled by Theorem A and discrete $A$ by the argument above, so the proof is complete. \qed

\vspace{7pt}

In view of these positive results, it seems worth including an example of groups $G$ and $A$ for which $\rmH_\m^\ast$ and $\rmH^\ast_\rm{ss}$ do not agree.  Let $G :=\bbT^\infty$ and $A := (\bbZ/2\bbZ)^\infty$ endowed with the trivial $G$-action.  In this case the groups $\rmH^\ast_\rm{ss}(G,A)$ still enjoy some continuity under the inverse limit of the quotients $\bbT^\infty \to\bbT^N$; this follows from Proposition 8.1 of Flach~\cite{Fla08} (one also needs his Corollary 7 to verify the hypotheses needed by that proposition, and that corollary is stated only for discrete modules, but the proof is the same for totally disconnected modules).

However, $\rmH_\m^\ast$ is \emph{not} continuous in this instance. Since $\rmH_\m^2(G,A)$ classifies compact extensions of $A$ by $G$, we may consider the class corresponding to the extension
\[(\bbZ/2\bbZ)^\infty \into \bbT^\infty \stackrel{\times 2}{\onto} \bbT^\infty.\]
This class cannot be in the image of the inflation map arising from any quotient $\bbT^\infty \to\bbT^N$, since then the corresponding extension would have a quotient homomorphism onto some compact extension of $(\bbZ/2\bbZ)^\infty$ by $\bbT^N$, and it is easily seen that such a group would have to be locally disconnected, whereas this is not true of $\bbT^\infty$.

(Thus, in this example $\rmH^\ast_\rm{ss}$ appears to behave `better' than $\rmH_\m^\ast$, but it is because only $\rmH_\m^\ast$ is able to capture the complicated topological structure of this group extension. A key feature of that structure is that it has no continuous local cross-section.)

Recall that Wigner in~\cite{Wig73} proved that $\rmH^\ast_\m(G,A) \cong \rmH^\ast_\rm{ss}(G,A)$ provided $A$ has his property F and $G$ is finite-dimensional. In this case $A$, being compact, has property F, but $G$ is clearly not finite-dimensional.

\vspace{7pt}

\noindent\emph{Remark}\quad J.L. Tu's recent paper~\cite{Tu06} contains the assertion that $\rmH_\m^\ast$ and $\rmH^\ast_\rm{ss}$ agree for all locally compact $G$ and Polish $G$-modules $A$.  In view of the above example this is incorrect, and it seems that there are some problems with Section 6 of his paper (specifically, we suspect, with his Proposition 6.1(b)).  Nevertheless, his construction of a sheaf-based cohomology for topological groupoids has other interesting consequences, and may contribute to other comparison results in the future: see the last part of Section~\ref{sec:further-discuss} below. \fin

\section{Further discussion}\label{sec:further-discuss}

The proofs of Theorems A--E suggest various other avenues for exploration, and points at which an alternative argument might be desirable.  In this section we offer some remarks on a few of these.

\subsection{More explicit comparisons with classifying space cohomology}

The known instances of isomorphism $\rmH_\m^\ast(G,A)\cong \rmH^\ast_\rm{cs}(G,A)$ for discrete $A$ and various $G$ are all obtained via $\rmH^\ast_\rm{ss}$ (and possibly also $\rmH^\ast_\rm{Seg}$, according as one chooses to follow Wigner's or Segal's construction of the maps landing in $\rmH^\ast_\rm{cs}$).  It seems instructive (and may be computationally useful) to include a more explicit construction of a comparison map
\[\rmH^\ast_\rm{cs}(G,A) \to \rmH_\m^\ast(G,A),\]
although we will not offer a correspondingly explicit proof that it is an isomorphism.  This will be the map $\iota_2$ appearing in our earlier diagram among the theories.

Recall that having fixed a choice of classifying bundle $E_G \stackrel{\pi}{\to} B_G$ for $G$, to a Polish $G$-module $A$ we associate the sheaf $\cal{A}$ on $B_G$ defined by
\[\A(U) := \rm{Map}(\pi^{-1}(U),A)^G,\]
and we have so far interpreted $\rmH^\ast_\rm{cs}(G,A)$ as $\rmH^\ast_{\rm{sheaf}}(G,\A)$. This may depend on the choice of $B_G$ in case $A$ is not discrete, so the notation is a little ambiguous.  For the comparison maps constructed by Wigner or Segal the case of non-discrete $A$ is needed during the dimension-shifting induction, so we still need to introduce it.

However, let us now change tack slightly.  We will suppose that $A$ is any $G$-module, but that the $G$-action is trivial, and next consider a much more elementary comparison map from an alternative $A$-valued cohomology on $B_G$. This new cohomology theory on $B_G$ is rather non-standard but is well-adapted to our purpose.

First observe that for locally compact, second countable groups $G$, the standard constructions of $E_G$ (such as Milnor's, which can be found in~\cite{Hus95}) all give it the structure of an increasing union of locally compact metrizable spaces $E_G^{(n)}$ with the direct limit topology, where $E_G^{(n)}$ is already $n$-connected.  Let $B_G^{(n)}$ be the corresponding quotients, so the topologies on these may all be generated by Polish metrics. 

For any Polish space $X$, let $S_k(X)$ denote the collection of all singular $k$-simplices in $X$: that is, continuous maps $\Delta^k\to X$ from the standard $k$-simplex $\Delta^k \subset \bbR^{k+1}$.  Then $S_k(X)$ is also Polish in its uniform topology, and this gives rise to a standard Borel structure on $S_k(X)$.

The union $E_G = \bigcup_{n\geq 1}E_G^{(n)}$ is generally not Polish, but if we let
\[S_k(E_G) := \bigcup_{n\geq 1}S_k(E_G^{(n)})\]
and similarly for $S_k(B_G)$ then each of these still carries a natural standard Borel structure obtained from the $E_G^{(n)}$.  Note that in general $S_k(E_G)$ is strictly smaller than the set of all singular $k$-simplices in the union $E_G$, since there may be such simplices that touch $E_G\setminus E_G^{(n)}$ for infinitely many $n$.

Now, if the $G$-action on $A$ is trivial, then classical singular cohomology $\rmH^\ast_{\rm{sing}}(B_G,A)$ is defined using arbitrary maps from $S_k(B_G)$ to the group $A$. By a simple analogy with this, we may compute a new cohomology theory called $\rmH^\ast_{\rm{m.sing}}(B_G,A)$ in which we insist that all singular cochains be measurable.  This is well-defined by the easy observation that the boundary of a measurable cochain is still measurable.

In general it is not clear when $\rmH^\ast_{\rm{m.sing}}$ agrees with singular or \v{C}ech cohomology.  Certainly the usual isomorphism results also cover $\rmH^\ast_{\rm{m.sing}}$ on the category of CW-complexes, but among arbitrary locally compact spaces we suspect that all three theories can differ.  However, for discrete $A$ it turns out to be easiest to compare $\rmH^\ast_\rm{m.sing}(B_G,A)$ with $\rmH_\m^\ast(G,A)$; the universality properties of $\rmH^\ast_\rm{sheaf}(B_G,\A)$ then give a comparison map from that to $\rmH^\ast_\rm{m.sing}(B_G,A)$ if desired.  At least for Lie groups one knows that each $B_G^{(n)}$ (in any of the standard constructions) is a finite-dimensional CW-complex, and so that is a case in which all the spatial cohomology theories agree.

Hence our goal is to construct a map
\[\iota_2:\rmH^\ast_\rm{m.sing}(B_G,A)\to \rmH^\ast_\m(G,A).\]
We construct this via the auxiliary cochain complex: \[\big(\rm{Map}_\rm{m}(S_k(E_G),A)^G\big)_{k\geq 1}\]
of $G$-invariant measurable maps from $S_k(E_G)$ to $A$.  The point is that the homology of this cochain complex can be shown always to coincide with $\rmH_\m^\ast(G,A)$ using the usual construction of a chain homotopy relating group and classifying space cohomology, with some judicious appeals to the Measurable Selector Theorem.

First, pick a distinguished fibre $i(G)$ in $E_G$, and let
\[a_0 = i: G \to E_G\quad\hbox{and}\quad b_0: E_G = S_0(E_G) \to G\]
be measurable and $G$-equivariant maps; the choice of $b_0$ is equivalent to choosing a measurable section of $\pi$, which again can be done explicitly using a suitable model of $E_G$.  Now one defines by induction on $k$ two families of $G$-equivariant measurable maps
\[a_k: G^{k+1} \to S_k(E_G)\quad\hbox{and}\quad b_k: S_k(E_G) \to G^{k+1}\]
so that $a_k\circ\partial = \partial\circ a_{k+1}$ and similarly for $b_k$.

The $b_k$ are easy to construct once we have $b_0$: given a $k$-simplex $f:\Delta^k \to E_G^{(n)}$, one simply sends it to the $(k+1)$-tuple of $b_0$-images of its vertices.

The $a_k$ are constructed using the fact that each $E_G^{(n)}$ is $n$-connected, so that on the set of those finite tuples in $S_k(E_G^{(n)})$ that define singular cycles (that is, the formal sums of their boundaries are zero) one can make a measurable choice of finite tuples in $S_{k+1}(E_G^{(n)})$ having those
cycles as their boundaries, provided $n$ is larger than $k$.  Since the $G$-action on each $S_k(E_G^{(n)})$ is clearly smooth (in Mackey's sense) this selection can be made $G$-equivariantly.  Given this, and if we already know $a_k$, then $a_{k+1}$ of some $(k+1)$-tuple $(g_0,...,g_{k+1})$ is simply a measurable selection
of a singular $(k+1)$-simplex in $E_G^{(n)}$ for some large $n$ that has boundary equal to the formal sum of
the $a_k$-images of the boundary of $(g_0,...,g_{k+1})$.

Having made these definitions, one considers the differences
\[\rm{id} - a_k \circ b_k\quad \hbox{and}\quad   b_k \circ a_k - \rm{id}\]
and can run through a standard proof that these are both chain
homotopic to the identity maps on their respective complexes.
(Ultimately one wants chain homotopies on the cochain complexes
instead, but this is obtained simply by composing cochains
with these measurable maps on the spaces of $k$-tuples and $k$-simplices.)  We omit a full description of this here, but it is a classical construction from the foundations of homology theory, with the only added twist that selections must be made measurably: a quick check shows that this is always possible.  A careful presentation of this classical argument appears as Theorem 2.10 in
Hatcher's book~\cite{Hat02}.

Thus we obtain an isomorphism
\[\rmH_\m^\ast(G,A) \cong H^\ast\big(\rm{Map}_\rm{m}(S_\ast(E_G),A)^G\big).\]
Now the comparison map from $\rmH^\ast_{\rm{m.sing}}(B_G,A)$ is obvious: given any measurable cocycle $f:S_k(B_G)\to A$, simply lifting this through the projection $\pi_\ast:S_k(E_G)\to S_k(B_G)$ gives a $G$-invariant measurable cocycle $S_k(E_G)\to A$.  This lifting defines a sequence of homomorphisms
\[\rm{Map}_\rm{m}(S_k(B_G),A)\to \rm{Map}_\rm{m}(S_k(E_G),A)^G,\quad k\geq 1,\]
which commute with coboundaries, and hence a chain map which descends to maps on cohomology
\[\iota_2:\rmH^\ast_{\rm{m.sing}}(B_G,A)\to H^\ast\big((\rm{Map}_\rm{m}(S_\ast(E_G),A)^G\big).\]

In the case of discrete groups $G$, and provided $\rmH^\ast_\rm{m.sing}$ agrees with some more conventional spatial cohomology, this is precisely the usual isomorphism between $\rmH^\ast(B_G,A)$ and $\rmH_\m^\ast(G,A)$.  The reason it gives an isomorphism there is simple and classical:
\begin{quote}
If $G$ is discrete, then for every $k\geq 1$, any element of $S_k(B_G)$ has a unique lift to an element of $S_k(E_G)$ up to translation by the $G$-action on $E_G$.
\end{quote}
As a result, the lifting by $\pi$ actually defines an isomorphism
\[\rm{Map}_\rm{m}(S_k(B_G),A)\cong \rm{Map}_\rm{m}(S_k(E_G),A)^G\]
for every $k$.

In fact, the same holds for totally disconnected $G$, and so the same argument also gives the following.

\begin{prop}
If $G$ is totally disconnected and $A$ is any Polish $G$-module with trivial action then
\[\rmH_\m^\ast(G,A) \cong \rmH^\ast_\rm{m.sing}(B_G,A).\]
\qed
\end{prop}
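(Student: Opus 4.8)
The plan is to show that for totally disconnected $G$ the lifting through $\pi$ is already an isomorphism at the level of cochain complexes, so that $\iota_2$ becomes an isomorphism on cohomology; combined with the isomorphism $\rmH_\m^\ast(G,A)\cong H^\ast(\rm{Map}_\rm{m}(S_\ast(E_G),A)^G)$ constructed above, this yields the claim. Since the construction of that isomorphism and of $\iota_2$ has already been carried out for arbitrary $G$, the only genuinely new ingredient is the analogue, for totally disconnected $G$, of the displayed lifting fact that made the discrete case work.

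First I would establish the key lemma: if $G$ is totally disconnected then for every $k$ each singular simplex $f:\Delta^k\to B_G$ admits a continuous lift $\tilde f:\Delta^k\to E_G$ with $\pi\circ\tilde f=f$, and any two such lifts differ by a unique element of $G$. The argument is that the pullback $f^\ast E_G$ is a principal $G$-bundle over the contractible, compact (hence paracompact) simplex $\Delta^k$, and is therefore trivial; lifts of $f$ correspond exactly to continuous sections of $f^\ast E_G\cong \Delta^k\times G$, that is to continuous maps $\Delta^k\to G$. Since $\Delta^k$ is connected and $G$ is totally disconnected, every such map is constant, so the continuous lifts are parametrized precisely by $G$ and differ by the $G$-action. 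This is exactly where total disconnectedness replaces discreteness: in the discrete case the fibre is discrete, but what is really being used is the constancy of maps out of the connected simplex, and that holds verbatim for totally disconnected fibres.

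It follows that $\pi_\ast:S_k(E_G)\to S_k(B_G)$ is surjective with fibres equal to the $G$-orbits, so that $G$-invariant functions on $S_k(E_G)$ are in set-theoretic bijection with functions on $S_k(B_G)$ via lifting through $\pi_\ast$. To upgrade this to a measurable statement, I would invoke that the $G$-action on each $S_k(E_G^{(n)})$ is smooth in Mackey's sense, as used above, so that the quotient Borel structure on $S_k(E_G)/G$ is standard and agrees with that of $S_k(B_G)$; equivalently, a measurable selection theorem furnishes a Borel section of $\pi_\ast$ through which a $G$-invariant Borel map on $S_k(E_G)$ descends to a Borel map on $S_k(B_G)$, the reverse passage being pullback by the Borel map $\pi_\ast$. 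This gives isomorphisms
\[\rm{Map}_\rm{m}(S_k(B_G),A)\cong \rm{Map}_\rm{m}(S_k(E_G),A)^G\]
for every $k$, exactly as in the displayed discrete case, and they commute with the coboundary maps since all are induced by $\pi_\ast$.

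Passing to cohomology, these cochain-level isomorphisms show that $\iota_2$ is an isomorphism, and composing with the already-established isomorphism $\rmH_\m^\ast(G,A)\cong H^\ast(\rm{Map}_\rm{m}(S_\ast(E_G),A)^G)$ completes the argument. The main obstacle I anticipate is not the topological lifting lemma, which is classical, but the measurable bookkeeping: one must ensure the chosen lifts can be made Borel jointly in the simplex, so that $S_k(E_G)/G\cong S_k(B_G)$ is a genuine isomorphism of standard Borel spaces. This is precisely where smoothness of the action and a measurable selection are essential, and some care is needed because each $S_k(E_G)$ is only assembled from the exhausting pieces $S_k(E_G^{(n)})$, on which the connectedness and local triviality actually live.
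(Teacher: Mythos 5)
Your proof is correct and follows essentially the same route as the paper: the paper deduces this proposition from the observation that the unique-lifting-up-to-$G$-translation property of singular simplices, stated there for discrete $G$, persists for totally disconnected $G$, so that pullback through $\pi_\ast$ is already an isomorphism of measurable cochain complexes $\rm{Map}_\rm{m}(S_k(B_G),A)\cong \rm{Map}_\rm{m}(S_k(E_G),A)^G$, which is then composed with the previously constructed isomorphism $\rmH_\m^\ast(G,A)\cong H^\ast\big(\rm{Map}_\rm{m}(S_\ast(E_G),A)^G\big)$. Your bundle-triviality argument for the lifting lemma (constancy of continuous maps from the connected simplex into a totally disconnected group) and the smoothness/measurable-selection bookkeeping are precisely the details the paper leaves implicit in its remark that ``the same holds for totally disconnected $G$.''
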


\noindent\emph{Remark}\quad By working with local-coefficient cohomology (see, for instance, Section 3.H of Hatcher~\cite{Hat02}), it should be possible to extend the above definition of $\rmH^\ast_\rm{m.sing}(B_G,A)$ to the case of a $G$-module $A$ on which the $G$-action factorizes through the totally disconnected quotient $G/G_0$.  To go beyond this we believe one is forced to return to the more flexible setting of sheaf cohomology, since the classical definition of cohomology with local coefficients makes essential use of a uniqueness among lifts of paths into the acting group, which may fail in case $G_0$ acts on $A$ nontrivially. \fin

\vspace{7pt}

In general, if $G$ has nontrivial path components then singular $k$-simplices in $B_G$ do not have unique lifts in $S_k(E_G)$ modulo the $G$-action for $k\geq 1$.  As a result, for non-totally-disconnected $G$ and $k\geq 1$ we can imagine the following two problems:
\begin{itemize}
\item On the one hand, there can be several $G$-orbits in $S_k(E_G)$ which project onto the same element of $S_k(B_G)$, but there is no reason why measurable cocycles in $\rm{Map}_\rm{m}(S_k(E_G),A)^G$ should take the same values on those different $G$-orbits.  There may therefore be nontrivial classes in $\rmH_\m^\ast(G,A)$ none of whose representatives can arise by lifting from $B_G$.
\item On the other hand, there may be measurable cochains in some $\rm{Map}_\m(S_k(E_G),A)^G$ which can be lifted from $\rm{Map}_\rm{m}(S_k(B_G),A)$, but which are coboundaries upstairs of cochains which cannot be so lifted, so that their cohomology classes are killed by this lift.
\end{itemize}

In light of these potential difficulties, the full strength of Theorem E seems very surprising.  The proofs we have of that theorem rely on comparisons via $\rmH^\ast_\rm{ss}$, but it would surely be enlightening to see a more elementary proof.

\begin{ques}\label{ques:elem-compar-for-discrete}
Is there a more elementary proof that for any locally compact, second countable $G$ and a discrete $G$-module $A$ with trivial action one has
\[\rmH_\m^\ast(G,A) \cong \rmH^\ast_\rm{m.sing}(B_G,A) \cong \rmH^\ast_\rm{sheaf}(B_G,\cal{A})?\]
\end{ques}

\subsection{Two illustrative examples}

We offer two examples to fill out the above discussion.

In the first, $A$ is discrete but $\rmH^\ast_{\rm{m.sing}}(B_G,A)$ does not agree with $\rmH^\ast_\rm{sing}(B_G,A)$: the former correctly computes $\rmH_\m^\ast(G,A)$ but the latter does not.

Consider the compact group $G = (\bbZ/2\bbZ)^\infty$, and let $G_\d$ denote the same group with its discrete topology.  If we use Milnor's construction of classifying spaces then it is functorial in the groups: in particular, the continuous isomorphism (with non-continuous inverse) given by the identity $G_\d \to G$ gives a continuous bijection (with non-continuous inverse) $E_{G_\d}\to E_G$.  However, it is easy to check from the total disconnectedness of $G$ that given a basepoint $x \in E_G$, a preimage of it $y \in E_{G_\d}$, and a singular simplex $f:\Delta^k\to E_G$ based at $x$, there is a unique lifted simplex in $E_{G_\d}$ based at $y$.  Hence there is a canonical bijection between the simplicial complexes $S_\ast(E_{G_\d})$ and $S_\ast(E_G)$ (but not between their own topologies or Borel structures).  By classical discrete-group cohomology theory, the $(\!\!\!\!\mod 2)$ singular cohomology of $B_{G_\d}$ computes $\rmH^\ast(G_\d,\bbZ/2\bbZ)$, and so in view of this bijection between simplicial complexes the same is true of the $(\!\!\!\!\mod 2)$-singular cohomology of $B_G$.

On the other had, since this $G$ is finite- (indeed, zero-) dimensional we have seen that the $(\!\!\!\!\mod 2)$ measurable singular cohomology of $B_G$ computes $\rmH_\m^\ast(G,\bbZ/2\bbZ)$.  This differs from $\rmH^\ast(G_\d,\bbZ/2\bbZ)$: for instance, in degree $1$, the latter contains all linear functionals $(\bbZ/2\bbZ)^\infty\to \bbZ/2\bbZ$, whereas the former contains only the continuous such functionals.

Of course, Theorem E applies to this example, so in this case $\rmH^\ast_\rm{m.sing}(B_G,A)$ does agree with $\rmH^\ast_{\rm{sheaf}}(B_G,\A)$.

Our second example pertains to the intuitive objection to using $\rmH^\ast(B_G,A)$ for non-discrete $A$ that it does not correctly take the topology of $A$ into account. In this example, the map
\[\rmH^\ast_\rm{m.sing}(B_G,\bbR) \to H^\ast\big(\rm{Map}_\rm{m}(S_\ast(E_G),\bbR)^G\big)\]
will fail to be an isomorphism, showing that this problem is real even if one insists on using measurable singular cochains in the above development.

The example is offered by $\rmH_\m^2(\bbT,\bbR)$. By Theorem A this is $(0)$, but on the other hand standard calculations give $\rmH^2(B_\bbT,\bbR) \cong \bbR$. Indeed, the spatial cohomology appearing here may be taken as singular or \v{C}ech, since a model of $B_\bbT$ up to homotopy is the infinite-dimensional complex projective space $\bbC\rm{P}^\infty$ and this is a direct limit of finite CW-complexes.  The total space of the classifying principal $\bbT$-bundle is given by the union of the complex spheres $S^{2n-1} \subset \bbC^n$ with the diagonal multiplication action of $\bbT \cong \rm{U}(1)$.  Since $\bbC\rm{P}^\infty$ is a $\rm{K}(\bbZ,2)$, its real cohomology ring is $\bbR[X^2]$ (that is, a real polynomial algebra generated by a single formal variable in degree $2$), so is $\bbR$ in degree $2$.

Let us sketch a more concrete proof of this calculation.  We will omit technical details, since this example is not central to our work, and assume some familiary with differential topology. First, one has
\[B_\bbT = \bbC\rm{P}^\infty = \bigcup_{m\geq 1}\bbC\rm{P}^m\]
under the obvious inclusions, and the $k^\rm{th}$ cohomology group of this increasing union stabilizes once $2m + 1 \geq k$.  (The basic facts about $\bbC\rm{P}^\infty$, including its r\^ole as $B_\bbT$ and this last calculation, can be found in Section 6.14 of Davis and Kirk~\cite{DavKir01}.)  It therefore suffices to compare $\rmH^2(\bbC\rm{P}^1,\bbR)$ with $\rmH_\m^2(\bbT,\bbR)$.  Arguing similarly, the homology of the cochain complex $\rm{Map}_\m(S_\ast(E_\bbT),\bbR)^\bbT$ in degree $2$ may already be computed using the subset of unit vectors $S^3 \subset \bbC^2$ with the action of $\bbT\cong \rm{U}(1)\subset \bbC$ by multiplication, so that
\[\rmH_\m^2(\bbT,\bbR)\cong \frac{\rm{ker}\big(d:\rm{Map}_\rm{m}(S_2(S^3),\bbR)^{\rm{U}(1)} \to \rm{Map}_\rm{m}(S_3(S^3),\bbR)^{\rm{U}(1)}\big)}{\rm{im}\big(d:\rm{Map}_\rm{m}(S_1(S^3),\bbR)^{\rm{U}(1)} \to \rm{Map}_\rm{m}(S_2(S^3),\bbR)^{\rm{U}(1)}\big)}.\]

Thus we have reduced the comparison of the two theories to calculations concerning the $\rm{U}(1)$-principal bundle
\[\rm{U}(1) \into S^3\stackrel{\pi}{\to} \bbC\rm{P}^1 \cong S^2.\]
This is none other than the second Hopf fibration, and it turns out that the relevant calculations here are part of a classic example in differential topology: the computation of the associated Hopf invariant.  This may be found, for instance, as Example 17.23 in Bott and Tu~\cite{BotTu82}; here we only record the results.  Since these spaces are all compact smooth manifolds, we may compute $\rmH^2(\bbC\rm{P}^1,\bbR)$ using de Rham cohomology, and a routine adaptation of that comparison to de Rham theory shows that in the same way one has
\begin{eqnarray}\label{eq:H2TR}
\rmH_\m^2(\bbT,\bbR) \cong \frac{\rm{ker}\big(d:\O^2(S^3)^{\rm{U}(1)}\to \O^3(S^3)^{\rm{U}(1)}\big)}{\rm{im}\big(d:\O^1(S^3)^{\rm{U}(1)} \to \O^2(S^3)^{\rm{U}(1)} \big)}.
\end{eqnarray}

As is standard, $\rmH^2_\rm{dR}(S^2)\cong \bbR$ with a canonical generator given by the surface-area form $\omega$.  Re-writing this as a form on $\bbC\rm{P}^1$ using stereographic projection and then lifting it through $\pi$ gives a de Rham $2$-cocycle on $S^3$. In the coordinates
\[S^3 = \{(x_1 + \rm{i}x_2,x_3 + \rm{i}x_4)\in \bbC^2:\ x_1^2 + x_2^2 + x_3^2 + x_4^2 = 1\},\]
this $2$-cocycle takes the form
\[\pi^\ast\omega = \frac{1}{\pi}(dx_1 \wedge dx_2 + dx_3 \wedge dx_4),\]
and so it equals the coboundary of the $1$-form
\[\a := \frac{1}{2\pi}(x_1 \wedge dx_2 - x_2 \wedge dx_1 + x_3 \wedge dx_4 - x_4 \wedge dx_3).\]
Now a simple calculation shows that this is invariant under the $\rm{U}(1)$-action, but it cannot have been lifted from a $1$-form on $S^2$ because $\omega$ is not a coboundary there and $\pi^\ast$ is injective on $\O^2(S^2)$.

The above discrepancy can be understood in another way by observing that $\rmH^\ast(B_\bbT,\bbR)$ is really computing $\rmH_\m^\ast(\bbT,\bbR_\d)$, the measurable-cochains cohomology of $\bbT$ with values in $\bbR$ with its \emph{discrete} topology.  This is because the class in
\[H^2\big(\rm{Map}_\rm{m}(S_\ast(E_\bbT),\bbR)^\bbT\big)\]
that corresponds to the $\rm{U}(1)$-invariant de Rham $2$-cocycle $\pi^\ast\omega$ has a representative which takes only finitely many values on $S_2(E_\bbT)$ (indeed, it arises by tensorizing the space of $\bbZ$-valued cochains with $\bbR$), and so it is measurable as a map $S_2(E_\bbT) \to \bbR_\d$.  However, the cochain whose boundary is this class in $\rm{Map}_\rm{m}(S_2(E_\bbT),\bbR)^\bbT$ can\emph{not} be chosen to take only a discrete set of values, and so it is not measurable as a map $S_1(E_\bbT) \to \bbR_\d$.  Thus for $\bbR_\d$-valued cohomology, this $2$-cochain is not a coboundary, and we obtain the correct calculation
\[\rmH^2_{\rm{dR}}(B_\bbT,\bbR)\cong \bbR \cong \rmH_\m^2(\bbT,\bbR_\d)\]
(taking the liberty of writing $\rmH_\m^2(\bbT,\bbR_\d)$ for the Borel-cochains theory, even though $\bbR_\d$ is not separable and hence not Polish).

So the difference between $\rmH^2(B_\bbT,\bbR)$ and $H^2(\rm{Map}_\m(S_\ast(E_\bbT),\bbR)^\bbT)$ is reflecting the difference between $\rmH_\m^2(\bbT,\bbR)$ and $\rmH^2_\m(\bbT,\bbR_\d)$.  To complete the picture, that difference can be described in terms of their group-extension interpretation.  Any extension
\[(0) \to \bbR_\d \stackrel{i}{\to} E \to \bbT \to (0)\]
is a Lie group because the map from $E$ to $\bbT$ is a local homeomorphism.  It is one-dimensional and has continuum-many connected components. So $E$ has a unique one-parameter subgroup with parameter lifted from $\bbT$, which looks like a helix.  If one follows this one-parameter subgroup as its image moves around $\bbT$ and returns to the identity, upstairs one returns to the subgroup $i(\bbR_\d)$ at some element $a\in \bbR_\d$. This element is an invariant of the extension, and in fact it parameterizes the possible extensions. So $\rmH^2_\m(\bbT, \bbR_\d) = \bbR_\d$.  The resulting $2$-cocycle takes only the values $0$, $a$ and $-a$. If we now map $\bbR_\d$ continuously to $\bbR$, then $E$ is mapped to an extension of $\bbR$ by $\bbT$.  Its image is a $2$-dimensional Lie group, and if we multiply the one-parameter subgroup above by the one-parameter subgroup in $\bbR$ whose time-$1$ image is $-a$, then their product is a one-parameter subgroup that closes up at time $1$ and trivializes the extension. This corresponds to finding a $1$-cochain whose coboundary is the image in $\rmH^2_\m(\bbT,\bbR)$ of the $2$-cocyle above.  That $1$-cochain is a segment of a local homomorphism of $\bbT$ into $\bbR$, so it takes an uncountable number of values and is not a Borel map of $\bbT$ into $\bbR_\d$.

Calculations with differential forms of the kind sketched above are a general feature of equivariant de Rham theory for actions of compact Lie groups on manifolds.  This relates the de Rham cohomology of the Borel construction $M_G = (E_G\times M)/G$ for a $G$-manifold $M$ (which is one way of defining the equivariant cohomology of this action) to the cohomology of a complex of differential forms on $E_G\times M$.  However, this link is made by considering those forms on $E_G\times M$ that are both invariant under the $G$-action and also are annihilated by interior multiplication with a certain family of vector fields constructed from the $G$-action. The complex of forms on $E_G\times M$ that are only assumed to be $G$-invariant is still too large, and its cohomology generally does not agree with $\rmH^\ast_\rm{dR}(M_G)$, in much the same way that the $\rm{U}(1)$-invariant forms that we used to express $\rmH_\m^2(\bbT,\bbR)$ in equation~(\ref{eq:H2TR}) give a different calculation from the de Rham cohomology $\rmH^\ast_\rm{dR}(S^2)$ of the quotient space.  Thus, and perhaps surprisingly, the restriction to $\rm{U}(1)$-invariant forms on $S^3$ that represent classes in $\rmH_\m^2(\bbT,\bbR_\d)$, which do give a calculation agreeing with $\rmH^2_\rm{dR}(S^2)$, is equivalent to the restriction to those $\rm{U}(1)$-invariant differential forms that satisfy an additional condition of annihilation by interior multiplication with a certain list of vector fields.  The monograph~\cite{GuiSte99} gives a thorough development of the de Rham side of this general theory.

\subsection{Another possible approach to the classifying space comparison}

Wigner's  proof of the isomorphism $\rmH_\rm{ss}^\ast(G,A) \cong \rmH^\ast(B_G,\cal{A})$ for discrete $G$-modules $A$ is a little difficult to grasp conceptually, but we note that the cohomology of groupoids as developed by J.L. Tu in~\cite{Tu06} may shed some light conceptually on what is going here.  Tu's cohomology groups are a direct generalization of the theory $\rmH^\ast_\rm{ss}$ for groups.

Suppose again that $G$ is a locally compact, second countable group.  Then it acts freely and properly on the space $E_G$ and so we can form the groupoid corresponding to this action, $G \times E_G$. Because the action is free, this is a principal groupoid --- that is, it is an equivalence relation on its unit space $E_G$, and it is the equivalence relation  generated by the action of $G$. This is not a locally compact groupoid so Tu's theory does not apply, although it does have a Haar system, which Tu also requires. It would seem that Tu's theory could be extended to cover groupoids like this, but here we shall simply assume that without exploring it in detail. In any case there is homomorphism of groupoids $p:G \times E_G \to G$ defined by $p(g,e) := g$, and the discrete $G$-module $A$ defines a discrete $(G \times E_G)$-module $A^E$. Then there is a homomorphism (an inflation homomorphism in cohomology)
\[p^\ast: \rmH_\rm{ss}^\ast(G,A) \to \rmH_\rm{ss}^\ast(G \times E_G,A^E),\]
where $\rmH_\rm{ss}^\ast$ also denotes what would be Tu's groupoid cohomology groups here.

There is also a groupoid homomorphims $\pi: G \times E_G \to B_G$ where $B_G$ is a principal groupoid corresponding to the trivial equivalence relation on $B_G$ defined by the diagonal. The $G$-module $A$ defines a locally constant sheaf $\cal{A}$ on $B_G$ which is a $B_G$-module. Then there is a pullback map
\[\pi^\ast: \rmH_\rm{ss}^\ast(B_G,\cal{A}) \to \rmH_\rm{ss}^\ast(G \times E_G, A^E).\]
But $\rmH_\rm{ss}^\ast(B_G,\cal{A})$ is just the ordinary sheaf cohomology of the space $B_G$ (c.f. Proposition 4.8 in~\cite{Tu06}).

One would then argue that the first map $p^\ast$ is an isomorphism because $E_G$ is contractible and $A^E$ is a constant sheaf, and then proceed by comparing the spectral sequences for these two cohomology groups. This is exactly what is happening in the first part of Wigner's proof. Then the second map $\pi^\ast$ should also be an isomorphism because $\pi$ is a Morita equivalence (c.f. Proposition 8.1 in Tu~\cite{Tu06}). Thus $\rmH_\rm{ss}^\ast(G,A)$ is isomorphic to $\rmH^\ast(B_G,\cal{A})$ because both are isomorphic to $\rmH_\rm{ss}^\ast(G \times E_G, A)$.

This is not a proof because, as remarked above, Tu's theory is not known to apply here, but it does shed some conceptual light on the result.  The hypothesis that $A$ is discrete is essential to assure that the sheaves on $G$, $E_G$ and $B_G$  corresponding to $A$ are constant or locally constant sheaves and so all line up correctly, and that the sheaf cohomology that enters is homotopy-invariant.

\subsection{Further consequences for representatives of cohomology classes}

Suppose now that $G$ is compact and $A$ discrete. By combining Theorems B and C, Proposition~\ref{prop:cpt-and-disc} and the explicit description of the comparison map $\rmH^\ast(B_G,A) \to \rmH_\m^\ast(G,A)$ obtained above, we can prove that all cohomology classes in $\rmH_\m^\ast(G,A)$ must have representatives of a very restricted kind.

\begin{dfn}
If $G$ is a compact group and $A$ a discrete group then a function $\s:G^p\to A$ is \textbf{semi-algebraic} if it takes only finitely many values, and if there is a finite-dimensional representation $\rho:G\to \rm{U}(n)$ such that $\s = \tau\circ \rho^{\times p}$ for a function $\tau:\rm{U}(n)^p\to A$ whose level sets are semi-algebraic (in the sense of Real Algebraic Geometry; see, for instance,~\cite{BocCosRoy98}).
\end{dfn}

\begin{prop}\label{prop:semi-alg-rep}
If $G$ is compact, $A$ is discrete and $p \geq 1$ then every class in $\rmH_\m^p(G,A)$ has a semi-algebraic representative.
\end{prop}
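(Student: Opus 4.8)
The plan is to combine the two reduction theorems with the explicit classifying-space comparison, the point being to carry the latter out in the semi-algebraic category. First I would reduce the acting group to a compact Lie group. By the Peter--Weyl theorem $G$ is an inverse limit of compact Lie quotients $\pi_m : G \onto G_m$, each $G_m$ realised as the image of a finite-dimensional unitary representation, and the strengthening of Theorem~B recorded in the Remark after Theorem~C gives $\rmH_\m^p(G,A) \cong \lim_{m\rightarrow}\rmH_\m^p(G_m, A^{\ker\pi_m})$ under inflation. Thus any class is inflated from some $c' \in \rmH_\m^p(G_m, A^{\ker\pi_m})$, and the crucial observation is that semi-algebraicity is preserved under such inflation: if $\s' = \tau\circ(\rho')^{\times p}$ on $G_m^p$ then $\s'\circ\pi_m^{\times p} = \tau\circ(\rho'\circ\pi_m)^{\times p}$, and $\rho'\circ\pi_m : G \to \rm{U}(n)$ is again a finite-dimensional representation. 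Applying Theorem~C to the directed family of finitely generated submodules of $A^{\ker\pi_m}$ then lets me assume $A$ is finitely generated, since a submodule inclusion visibly carries semi-algebraic representatives to semi-algebraic representatives. Hence it suffices to treat a compact Lie group $G$, which I fix as a real-algebraic subgroup of some $\rm{U}(n)$ via a faithful representation $\rho$, together with a finitely generated discrete module $A$ (whose $G$-action then factors through a finite quotient). For this faithful $\rho$ it is enough to produce a representative $\s : G^p \to A$ taking finitely many values and with semi-algebraic level sets in the real-algebraic variety $G^p$, as any such $\s$ extends to a $\tau$ on $\rm{U}(n)^p$ with semi-algebraic level sets.

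Next I would pass to the classifying space. By Proposition~\ref{prop:cpt-and-disc} one has $\rmH_\m^\ast(G,A)\cong\rmH^\ast_\rm{cs}(G,A)$, and since $G$ is finite-dimensional $B_G$ may be taken as an increasing union of finite-dimensional spaces, so this coincides with the measurable singular theory $\rmH^\ast_\rm{m.sing}(B_G,A)$ of the earlier discussion, realised on cochains through the chain-homotopy maps $a_k : G^{k+1}\to S_k(E_G)$ constructed there (with the local coefficient system determined by the finite-image $G$-action, which falls within the scope of the Remark extending $\rmH^\ast_\rm{m.sing}$ to actions through totally disconnected quotients). Choosing real-algebraic models --- for instance $E_G^{(N)}$ a highly connected Stiefel-type variety with free algebraic $G$-action and $B_G^{(N)} = E_G^{(N)}/G$ --- a given class stabilises at some finite level and, after a semi-algebraic triangulation of $B_G^{(N)}$, admits a simplicial (hence finitely valued, semi-algebraically structured) cocycle representative $\bar f$. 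Pulling $\bar f$ back through $\pi$ to a $G$-invariant cocycle $f$ on $S_p(E_G^{(N)})$ and then passing to the inhomogeneous bar cochain $\s(g_1,\dots,g_p) := f\big(a_p(1,g_1,g_1g_2,\dots,g_1\cdots g_p)\big)$ produces a representative of the original class, where the reindexing map $(g_1,\dots,g_p)\mapsto(1,g_1,\dots,g_1\cdots g_p)$ is polynomial and hence harmless for semi-algebraicity.

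The remaining task, and the main obstacle, is to arrange that $\s$ is genuinely semi-algebraic, which forces the construction of the $a_k$ to be done within the semi-algebraic category. Concretely I would take $a_k(g_0,\dots,g_k)$ to be the geodesic simplex spanned by the translates $g_0 x_0,\dots,g_k x_0$ of a fixed basepoint for a $G$-invariant real-algebraic Riemannian metric on $E_G^{(N)}$, so that $a_k$ depends semi-algebraically on $(g_0,\dots,g_k)$ while respecting the simplicial face identities; the equivariant section $b_0$ and the equivariant fillings needed to verify the chain-homotopy relations can similarly be selected semi-algebraically using semi-algebraic triviality (Hardt's theorem) and semi-algebraic selection over the relevant real-algebraic $G$-varieties. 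Granting these choices, $\s$ takes only the finitely many values of $\bar f$, and each level set $\{\s = a\}$ is cut out by the semi-algebraic condition that the simplex $a_p(1,g_1,\dots,g_1\cdots g_p)$ lie in the union of simplicial cells on which $f$ equals $a$; hence $\s$ factors through $\rho^{\times p}$ with semi-algebraic level sets, completing the proof. I expect the delicate part to be exactly this bookkeeping --- establishing the equivariant semi-algebraic selections globally (geodesic uniqueness may fail away from small simplices, so Hardt-type arguments are what actually do the work) and confirming that the transported cochain carries the required semi-algebraic structure.
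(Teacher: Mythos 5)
Your proposal is correct and follows essentially the same route as the paper's own proof: reduction via Theorem B to a compact Lie group realised as a closed real-algebraic subgroup of $\rm{U}(n)$, comparison with classifying-space cohomology via Proposition~\ref{prop:cpt-and-disc}, truncation of the Stiefel-manifold model $V_n(\bbC^m)$ at a $p$-connected finite stage, and pullback of simplicial cochains from a semi-algebraic triangulation of the real-algebraic quotient $V_n(\bbC^m)/G$. The delicate point you flag --- choosing the chain-level comparison simplices semi-algebraically --- is exactly what the paper compresses into the phrase ``a suitable singular $(p+1)$-simplex'', so your extra care there (and your harmless additional reduction to finitely generated $A$) only makes the same argument more explicit.
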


Of course this representative need not be unique.

\begin{proof}
By Theorem B, any class in $\rmH_\m^p(G,A)$ is inflated from some quotient $\pi:G\to G_1$ with $G_1$ a Lie group, which may be embedded as subgroup of some $\rm{U}(n)$. We may therefore assume that $G$ is itself a Lie subgroup of $\rm{U}(n)$, and furthermore that it is a closed real algebraic subgroup (using the Structure Theory for compact Lie groups).

Now, by Proposition~\ref{prop:cpt-and-disc} the comparison map $\rmH^p(B_G,A)\to \rmH_\m^p(G,A)$ constructed above is an isomorphism.  Moreover, since discrete-valued cohomology groups are homotopy-invariant, we may pick any suitable model for $B_G$ in making this comparison.  In particular, $G$ acts freely on the total space $E_{\rm{U}(n)}$ of the universal bundle for the unitary group that contains it, and so we may take $E_{\rm{U}(n)}/G$ for our model of $B_G$.

However, an explicit choice for $E_{\rm{U}(n)}$ is available in the form of the infinite-dimensional complex Stiefel manifold
\[V_n(\bbC^\infty) := \bigcup_{m\geq n}V_n(\bbC^m),\quad\quad  V_n(\bbC^m) := \rm{U}(m)/\rm{U}(m-n),\]
(see, for instance, Subsection 6.14.3 in Davis and Kirk~\cite{DavKir01}). Moreover, in order to compute degree-$p$ cohomology it suffices to consider the smaller $G$-bundle
\[G\into V_n(\bbC^m) \to V_n(\bbC^m)/G\]
for $m > p + n + 1$, since in this case $V_n(\bbC^m)$ is already $p$-connected and its cohomology groups up to degree $p$ have stabilized.  Since $G$ is a closed real algebraic subgroup of $\rm{U}(n)$, and so $G\times \rm{U}(m-n)$ is a closed real algebraic subgroup of $\rm{U}(m)$, the quotient
\[V_n(\bbC^m)/G = \rm{U}(m)/(G\times \rm{U}(m-n)) \subset M_{m\times m}(\bbC)/(G\times \rm{U}(m-n))\]
still carries the structure of a real algebraic manifold (see, for instance, Theorem 3.4.3 in Onishchik and Vinberg~\cite{OniVin90}).

Hence we obtain an isomorphism
\[\rmH^p(V_n(\bbC^m)/G,A) \to \rmH_\m^p(G,A),\]
where to be explicit we choose a single $G$-orbit $\iota:G\into V_n(\bbC^m)$ and then map each $(p+1)$-tuple in $G$ to a suitable singular $(p+1)$-simplex in $V_n(\bbC^m)$ with vertices equal to the images of that $(p+1)$-tuple under $\iota$.  However, $V_n(\bbC^m)/G$ is a compact real algebraic manifold, and therefore its cohomology may also be computed as the simplicial cohomology of any suitable triangulation.  This, in turn, may be chosen so that its cells are semi-algebraic, and having done this the resulting map from simplicial $p$-cochains on this triangulation of $V_n(\bbC^m)/G$ to measurable cochains on $G^p$ results in functions that are semi-algebraic, as required. \qed
\end{proof}

It seems likely that another dimension-shifting argument (similar to that underlying Proposition~\ref{prop:basic-smoothness}) could give a more direct proof of this assertion, and some related results on the structure of cocycles for more general groups $G$.   We will not explore it further in detail, but to illustrate its other applications we offer a sketch proof of the following, which may be the most elementary result in this line.

\begin{prop}
For any locally compact, second countable group $G$ and Polish $G$-module $A$, any class in $\rmH_\m^p(G,A)$ has a representative $G^p\to A$ that is continuous on a G$_\delta$ subset of $G^p$ of full Haar measure (which is therefore also dense).
\end{prop}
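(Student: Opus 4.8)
The plan is to run a dimension-shifting induction on $p$ exactly parallel to the proof of Proposition~\ref{prop:basic-smoothness}, but tracking the \emph{continuity set} of the representative in place of its local total boundedness. When $p=1$ any class is represented by a crossed homomorphism, which is continuous on all of $G$ by automatic continuity, so the conclusion is immediate (the whole of $G$ is a dense $G_\delta$ of full measure). For $p\geq 2$, given $\psi\in\Z^p(G,A)$, I would apply the dimension-shifting operator $Q$ to write $\psi=d\lambda$ with $\lambda\in\C^{p-1}(G,\C(G,A))$, so that its image $\bar\lambda$ under the quotient map $q$ onto $\C(G,A)/\iota(A)$ is a $(p-1)$-cocycle. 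By the inductive hypothesis applied to the Polish $G$-module $\C(G,A)/\iota(A)$, I may replace $\bar\lambda$ within its class by a cocycle $\bar\kappa$ whose continuity set is a $G_\delta$ of full Haar measure in $G^{p-1}$. Exactly as in Proposition~\ref{prop:basic-smoothness}, this produces $\psi=d\kappa+d\beta$ where $\kappa$ is a measurable lift of $\bar\kappa$ and $\sigma:=d\kappa$ takes values in $\iota(A)$, hence is identified with a cocycle cohomologous to $\psi$.

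Granting that the lift $\kappa$ can be chosen so that its own continuity set $\mathrm{Cont}(\kappa)\subseteq G^{p-1}$ is again of full measure, the conclusion for $\sigma$ follows cleanly, and this is the routine part. Each summand of $d\kappa$ has the form $\kappa\circ\partial_i$ for one of the face or multiplication maps $\partial_i\colon G^p\to G^{p-1}$ occurring in~(\ref{eq:bdrydfn}), apart from the leading term $(g_1,\ldots,g_p)\mapsto T^{g_1}(\kappa(g_2,\ldots,g_p))$, which is $\kappa\circ\partial_0$ composed with the jointly continuous action map. Every $\partial_i$ is continuous and preserves Haar nullity (being a projection, or a coordinate multiplication up to the measure-preserving change of variables already used in Lemma~\ref{lem:dim-shift-reg-1}), so each $\partial_i^{-1}(\mathrm{Cont}(\kappa))$ is a full-measure Borel set on which $\kappa\circ\partial_i$ is genuinely continuous. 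Hence $\sigma$ is continuous at every point of the finite intersection $\bigcap_i\partial_i^{-1}(\mathrm{Cont}(\kappa))$, which is of full measure; the continuity set of $\sigma$ is automatically $G_\delta$, and full measure forces it to be dense. Finally, since $\iota(A)$ carries the subspace topology inside $\C(G,A)$, continuity of $\sigma$ as an $\iota(A)$-valued map coincides with its continuity as an $A$-valued map, just as in Proposition~\ref{prop:basic-smoothness}.

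The main obstacle is therefore the lifting step: producing a measurable lift $\kappa$ of $\bar\kappa$ whose continuity set still has full measure. This is delicate because continuity on a full-measure set is a genuinely measure-theoretic demand rather than a Baire-category one: any Borel cochain is automatically continuous on a comeager $G_\delta$ by the Baire property, but comeagreness does not imply full measure, and indicators of nowhere-dense sets of positive measure show that arbitrary measurable maps need not admit \emph{any} full-measure continuity set. It is precisely here that the cocycle structure inherited through the induction must be exploited, rather than mere measurability. The natural approach is to lift along the continuity set $\bar S$ of $\bar\kappa$: use Lusin's theorem to exhaust $\bar S$ by compacta $K_j$ on which $\bar\kappa$ is continuous, apply the compact-lifting lemma recalled in Section~2 to lift each compact image $\bar\kappa(K_j)$ to a compactum $L_j\subseteq\C(G,A)$ with $q(L_j)\supseteq\bar\kappa(K_j)$, and use measurable selection to assemble a Borel lift, then show that its discontinuities can be confined to a set whose $\bar\kappa$-preimage is Haar-null. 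Carrying this bookkeeping through so that $\mathrm{Cont}(\kappa)$ is genuinely of full measure is the crux of the argument; it is intimately related to the local cross-section questions raised in Question~\ref{ques:R-in-L0}, and is the one place where the details require real care.
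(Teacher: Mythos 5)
Your overall structure (a dimension-shifting induction, with the coboundary computation as the routine part) matches the paper's intent, and your diagnosis of where the difficulty lies is exactly right: everything reduces to lifting a cochain through the quotient $\C(G,A)\onto \C(G,A)/\iota(A)$ without destroying the full-measure continuity set. But that is precisely the step the paper says cannot be carried out directly, and your sketch of it does not close the gap. Lusin's theorem gives compacta $K_j$ on which $\bar{\kappa}$ is \emph{relatively} continuous, i.e.\ $\bar{\kappa}|_{K_j}$ is continuous; this is strictly weaker than continuity of $\bar{\kappa}$ at the points of $K_j$ as a function on $G^{p-1}$, which is what your routine part consumes (your sets $\partial_i^{-1}(\mathrm{Cont}(\kappa))$ only do their job if $\mathrm{Cont}(\kappa)$ consists of genuine continuity points). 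A Borel map can be relatively continuous on each of countably many compacta whose union has full measure and yet be continuous at no point at all, so no bookkeeping with lifts of the sets $\bar{\kappa}(K_j)$ will by itself confine the discontinuities of $\kappa$ to a null set. The cocycle structure that you say must be exploited is never made concrete, and your own admission that the step is "intimately related" to Question~\ref{ques:R-in-L0} --- an open problem --- signals that this route is not actually available. As it stands, the proposal is the naive induction plus an unproved (and, in this form, false-looking) lifting lemma.

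The paper repairs the induction differently: it strengthens the inductive statement rather than trying to lift bare continuity sets. One proves that every class in $\rmH_\m^p(G,A)$ has a representative that is a \emph{uniform limit of functions $G^p\to A$ which are locally constant outside a closed Haar-negligible subset of $G^p$}. This stronger property is manifestly stable under lifting through a continuous surjection of Polish $G$-modules --- one lifts each approximant compatibly, piece by piece on the cells where it is locally constant --- and it is preserved by the coboundary manipulations of the dimension-shifting step; moreover continuous functions (in particular the crossed homomorphisms of the base case) can be uniformly approximated by functions of this form. It implies the stated conclusion because a uniform limit of functions, each continuous at every point outside a closed null set $F_j$, is continuous at every point of the full-measure G$_\delta$ set $G^p\setminus\bigcup_j F_j$. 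Replacing "continuous on a full-measure G$_\delta$" by this liftable refinement is the missing idea in your proposal.
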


This is clearly true for $p=1$ because crossed homomorphisms are automatically continuous.  Na\"\i vely, it would then follow for all $p$ by dimension-shifting if one could show that given a continuous surjection of Polish $G$-modules $A\onto B$ and any measurable cochain $\s:G^p\to B$ satisfying the conclusions of the proposition, $\s$ can be lifted to a cochain $\s':G^p\to A$ still satisfying those conclusions.

Unfortunately it is not clear how to prove this directly, but this approach can be repaired by refining the desired conclusions a little.  The key is to impose some extra requirements on the possible structure of the set of discontinuities of the cochain.  A suitable refinement is to prove instead that:
\begin{quote}
Any class in $\rmH_\m^p(G,A)$ has a representative $G^p\to A$ that is a uniform limit of functions $G^p \to A$ which are locally constant outside some Haar-negligible closed subset of $G^p$.
\end{quote}
Since any such uniform limit is still continuous outside the union of those negligible closed subsets, we obtain the desired full-measure G$_\delta$ set of continuity.  On the other hand, it is fairly simple to prove that
\begin{itemize}
\item any continuous function $G^p\to A$ may be uniformly approximated by a.e. locally constant functions of the kind described above, and
\item if $A \onto B$ is a continuous surjection of $G$-modules then any function $G^p\to B$ which is a uniform limit of the kind introduced above may be lifted to a function $G^p\to A$ which has the same structure (simply by lifting each of the approximants in a compatible way).
\end{itemize}

From these facts, the complete proof by dimension-shifting follows quickly.

In order to recover Proposition~\ref{prop:semi-alg-rep} instead in case $G$ is a Lie group, one simply makes a further refinement to the above formulation of the desired structure of cocycles to prove that:

\begin{quote}
Any class in $\rmH_\m^p(G,A)$ has a representative $G^p\to A$ that is a uniform limit of functions $G^p \to A$ which are locally constant outside some closed nowhere-dense semi-algebraic subset of $G^p$.
\end{quote}

Once again, the facts that all continuous functions may be so approximated and that functions of this structure can be lifted through $G$-module quotients are now easy exercises.  In case $A$ is discrete, any uniformly convergent sequence of such semi-algebraic representatives must eventually stabilize, giving the conclusion.

\section{Recap of open questions}

We close by recollecting the various open questions posed earlier in the paper.

\subsection{Questions on the structure of Polish modules}

\begin{itemize}
\item Question~\ref{ques:R-in-L0}: Consider $\bbR$ identified with the subspace of constant functions in $L^0([0,1])$, the space of a.e.-equivalence classes of measurable functions $[0,1]\to \bbR$.  Does it have a continuous cross-section for the topology of convergence in probability?

\item Question~\ref{ques:subgp-of-L1(sq)}: Does the closed, contractible subgroup
\[\pi_1^\ast L^1([0,1],\bbZ) + \pi_2^\ast L^1([0,1],\bbZ)\]
admit a local cross-section in $L^1([0,1]^2,\bbZ)$, where $\pi_i:G\times G\to G$ for $i=1,2$ are the coordinate projections?
\end{itemize}

\subsection{Questions on analytic properties of cohomology groups}

\begin{itemize}
\item Question~\ref{ques:thm-B-fail}: Are there examples of locally compact, second countable, compactly-generated groups $G$ for which Theorem B fails in degree $2$ for toral or discrete targets?

\item Question~\ref{ques:B-or-C-with-extra-assumptions}: Does Theorem B or C hold if $G$ is locally compact, compactly-generated and second countable, $G_n = G/K_n$ for some decreasing sequence of compact subgroups $K_n \unlhd G$, and if
\begin{itemize}
\item $G$ is connected, or
\item the first quotient $G/K_1$ is a discrete $\rm{FP}_\infty$ group (see Chapter VIII of Brown~\cite{Bro82})?
\end{itemize}

\item Question~\ref{ques:topologizing-cohom-for-disc-G}: Is there a countable discrete group $\G$ for which $\B^p(\G,\bbZ)$ is not closed in $\Z^p(\G,\bbZ)$ for the topology of pointwise convergence?  Can $\G$ be finitely-generated, or even finitely-presented?  (It is easy to see that this would need $p\geq 2$.)
\end{itemize}

\subsection{Questions on vanishing or agreement between cohomology theories}

\begin{itemize}
\item Question~\ref{ques:bad-compact-and-contractible}: Is there a non-locally-convex F-space $A$, or other contractible Polish group, which admits a continuous action of a compact metric group $G$ such that $\rmH_\m^p(G,A) \neq (0)$ for some $p \geq 1$?

\item Question~\ref{ques:contractible-disagree-with-ss}: Are there a connected locally compact group $G$ and a non-locally-convex F-space $G$-module $A$, or more generally a contractible Polish module, for which
\[\rmH_\m^p(G,A) \not\cong \rmH^p_\rm{ss}(G,A)\]
for some $p$?

\item Question~\ref{ques:elem-compar-contractible}: Is there a direct proof that $\rmH_\m^p(G,A) = (0)$ whenever $G$ is contractible, $A$ is discrete and $p \geq 1$?

\item Question~\ref{ques:elem-compar-for-discrete}: Is there an elementary proof that for any locally compact, second countable $G$ and a discrete $G$-module $A$ with trivial action one has
\[\rmH_\m^\ast(G,A) \cong \rmH^\ast_\rm{m.sing}(B_G,A) \cong \rmH^\ast_\rm{sheaf}(B_G,\cal{A})?\]
\end{itemize}

\section*{Acknowledgements}

Our thanks go to Matthias Flach, Karl Hofmann, Arati Khairnar, Stephen Lichtenbaum, Nicolas Monod, C.S. Rajan and Terence Tao for several helpful communications. \fin

\bibliographystyle{abbrv}
\bibliography{Cty_of_meas_cohom}

\end{document}